\documentclass[11pt]{article}
\usepackage[utf8]{inputenc}
\usepackage[T1]{fontenc}
\usepackage{listings}

\usepackage{xcolor}
\definecolor{juliapurple}{RGB}{95,60,128}
\definecolor{juliagreen}{RGB}{56,152,38}
\definecolor{codegray}{RGB}{247,247,247}
\definecolor{darkteal}{RGB}{0,70,70}
\definecolor{juliaprompt}{RGB}{0,140,50}
\definecolor{Maroon}{RGB}{128,0,0}
\lstdefinelanguage{Julia}{
  morekeywords=[1]{
    using,import,function,end,return,if,else,elseif,for,while,in,do,
    struct,mutable,where,begin,let,const,nothing,true,false, NTuple, Float64, AbstractVector, Bool, push!, 
  },
  morekeywords=[2]{
    TriangulationCache, visualize, refine!, Sierpinski, Barycenter, Random, save
  },
  morekeywords=[3]{
    resolution, xlims, ylims, edges, buttons, strategy, is_complete, min_refinement_area
  },
  sensitive=true,
  morecomment=[l]{\#},
  morestring=[b]",
}
\usepackage[margin=1in]{geometry}
\usepackage{microtype}
\usepackage{amsmath,amsfonts,amssymb,amsthm,amsopn,mathtools}
\usepackage{thmtools}
\usepackage{accents}
\usepackage{bm}
\usepackage{graphicx}
\usepackage{xcolor}
\usepackage{nicematrix}
\usepackage{multirow}
\usepackage{makecell}
\usepackage{multicol}
\usepackage{blkarray}
\usepackage{subcaption}
\usepackage{paralist}
\usepackage{enumitem}
\setlist[enumerate]{leftmargin=.5in}
\setlist[itemize]{leftmargin=.5in}
\usepackage{algorithm}
\usepackage{algpseudocode}
\usepackage{verbatim}
\algrenewcommand\algorithmicrequire{\textbf{Inputs:}}
\algrenewcommand\algorithmicensure{\textbf{Output:}}

\usepackage{tikz}
\usepackage{tikz-cd}
\usetikzlibrary{
  decorations.pathreplacing,
  patterns,
  external,
  calc,
  shapes.multipart
}
\definecolor{fondo}{rgb}{0.898,0.996,0.898}
\pgfkeys{/tikz/.cd, K/.store in=\K, K=1}
\usepackage{todonotes}
\usepackage[normalem]{ulem}

\newcommand{\mycomment}[1]{}
\usepackage[numbers,sort]{natbib}
\usepackage{hyperref}
\usepackage[nameinlink,noabbrev]{cleveref}
\hypersetup{
  colorlinks=true,
  linkcolor=blue!55!black,
  citecolor=blue!55!black,
  urlcolor=blue!60!black,
  pdftitle={Alt's Problem},
  pdfauthor={Taylor Brysiewicz}
}

\theoremstyle{plain}
\newtheorem{theorem}{Theorem}[section]
\newtheorem{lemma}[theorem]{Lemma}
\newtheorem{proposition}[theorem]{Proposition}
\newtheorem{corollary}[theorem]{Corollary}

\theoremstyle{definition}

\newtheorem{example}[theorem]{Example}
\theoremstyle{remark}
\newtheorem{remark}[theorem]{Remark}

\crefname{hypothesis}{hypothesis}{hypotheses}
\Crefname{hypothesis}{Hypothesis}{Hypotheses}
\crefname{claim}{claim}{claims}
\Crefname{claim}{Claim}{Claims}
\crefname{fact}{fact}{facts}
\Crefname{fact}{Fact}{Facts}
\crefname{question}{question}{questions}
\Crefname{question}{Question}{Questions}
\crefname{subsection}{section}{sections}
\Crefname{subsection}{Section}{Sections}

\newcommand{\email}[1]{\href{mailto:#1}{\texttt{#1}}}
\title{Alt's Problem}
\author{Taylor Brysiewicz\thanks{University of Western Ontario, London, ON
  (\email{tbrysiew@uwo.ca}).}}
\date{}

\newcommand{\mydef}[1]{{\color{blue}#1}}

\makeatletter
\DeclareRobustCommand{\sqcdot}{\mathbin{\mathpalette\morphic@sqcdot\relax}}
\newcommand{\morphic@sqcdot}[2]{%
  \sbox\z@{$\m@th#1\centerdot$}%
  \ht\z@=.33333\ht\z@
  \vcenter{\box\z@}%
}
\makeatother
\makeatletter
\def\fooC#1{%
  \expandafter\newcommand\csname c#1\endcsname{\mathcal{#1}}}
\def\fooB#1{%
  \expandafter\newcommand\csname b#1\endcsname{\mathbb{#1}}}
\count@=0
\loop
  \advance\count@ 1
  \edef\y{\@Alph\count@}%
  \expandafter\fooC\y
  \expandafter\fooB\y
\ifnum\count@<26
\repeat
\makeatother
\begin{document}
\maketitle

\vspace{-0.3in} 

\begin{abstract}
We prove there are $1442$  four-bar coupler curves  through nine generic points in the plane, and thus resolve Alt's problem. We obtain this proof in three steps. First, we identify the space of coupler curves with a Zariski open subset of $\textrm{Gr}(3,6)$. Next, we formulate the polynomial system representing the nine-point path synthesis problem in these coordinates  and modify it to obtain the mixed volume $5538$. Finally, we prove that $4096$ of the branches of the generic sparse polynomial system with that support escape the torus in the sparse limit. Thus, we obtain an upper bound of $5538-4096=1442$ for the generic solution count. A lower bound of $1442$ is achieved via numerical certification on one instance. 
\end{abstract}

\section{Introduction}  
\label{sec:Introduction}
 The prototypical interpolation problem for a family $\mathcal F$ of varieties asks how many elements in $\mathcal F$ pass through $\dim(\mathcal F)$ generic points. For this work,  we consider \textit{coupler curves of four-bar linkages}, like the one shown below in \Cref{fig:coupler}. The corresponding interpolation problem is called the \textit{nine-point path synthesis problem}. Determining this count is \mydef{Alt's problem}, posed in 1923~\cite{Alt1923}. 

\vspace{-10pt}

\begin{figure}[htbp]
    \centering
\def\verticalstretch{0.4} 
\def\figurescale{0.45}    
\def\panelseparation{11.8}
\def\couplerAlong{1.0625}
\def\couplerNormal{0.9375}
\begin{tikzpicture}[
    scale=\figurescale, yscale=\verticalstretch,
    line cap=round,line join=round,
    pivot/.style={circle,fill=black,inner sep=1.2pt},
    every label/.style={font=\small}
]
    \pgfmathsetmacro{\thetaZero}{acos((sqrt(13)-1)/sqrt(10))}
    \foreach \branch in {-1,1} {
        \xdef\fourbarCurveCoordinates{}
        \foreach \sample in {0,...,320} {
            \pgfmathsetmacro{\thetaValue}{\thetaZero
                +(360-2*\thetaZero)*(1-cos((\sample/320)*180))/2}
            \pgfmathsetmacro{\cx}{sqrt(10)*cos(\thetaValue)}
            \pgfmathsetmacro{\cy}{sqrt(10)*sin(\thetaValue)}
            \pgfmathsetmacro{\distanceSquared}{26-8*\cx}
            \pgfmathsetmacro{\lambdaValue}{(\distanceSquared-18)/(2*\distanceSquared)}
            \ifnum\sample=0
                \def\muValue{0}
            \else\ifnum\sample=320
                \def\muValue{0}
            \else
                \pgfmathsetmacro{\muValue}{sqrt(max(0,
                    8/\distanceSquared-(\lambdaValue)^2))}
            \fi\fi
            \pgfmathsetmacro{\qx}{\lambdaValue*(4-\cx)+\branch*\muValue*\cy}
            \pgfmathsetmacro{\qy}{-\lambdaValue*\cy+\branch*\muValue*(4-\cx)}
            \pgfmathsetmacro{\px}{\cx+\couplerAlong*\qx-\couplerNormal*\qy}
            \pgfmathsetmacro{\py}{\cy+\couplerAlong*\qy+\couplerNormal*\qx}
            \xdef\fourbarCurveCoordinates{\fourbarCurveCoordinates (\px,\py)}
        }
        \ifnum\branch=-1
            \global\let\fourbarMinus\fourbarCurveCoordinates
        \else
            \global\let\fourbarPlus\fourbarCurveCoordinates
        \fi
    }

    \foreach \panel/\poseAngle in {0/71.565051177,1/150,2/210} {
        \begin{scope}[xshift={\panel*\panelseparation cm}]
            \draw[orange!85!black,thick] plot coordinates {\fourbarMinus};
            \draw[orange!85!black,thick] plot coordinates {\fourbarPlus};
            \ifnum\panel=0
                \def\cx{1}\def\cy{3}\def\dx{3}\def\dy{5}
            \else
                \pgfmathsetmacro{\cx}{sqrt(10)*cos(\poseAngle)}
                \pgfmathsetmacro{\cy}{sqrt(10)*sin(\poseAngle)}
                \pgfmathsetmacro{\distanceSquared}{26-8*\cx}
                \pgfmathsetmacro{\lambdaValue}{(\distanceSquared-18)/(2*\distanceSquared)}
                \pgfmathsetmacro{\muValue}{sqrt(8/\distanceSquared-(\lambdaValue)^2)}
                \pgfmathsetmacro{\dx}{\cx+\lambdaValue*(4-\cx)+\muValue*\cy}
                \pgfmathsetmacro{\dy}{\cy-\lambdaValue*\cy+\muValue*(4-\cx)}
            \fi
            \pgfmathsetmacro{\px}{\cx+\couplerAlong*(\dx-\cx)-\couplerNormal*(\dy-\cy)}
            \pgfmathsetmacro{\py}{\cy+\couplerAlong*(\dy-\cy)+\couplerNormal*(\dx-\cx)}
            \coordinate (A) at (0,0);
            \coordinate (B) at (4,0);
            \coordinate (C) at (\cx,\cy);
            \coordinate (D) at (\dx,\dy);
            \coordinate (P) at (\px,\py);
            \fill[blue!10,fill opacity=0.7] (C)--(D)--(P)--cycle;
            \draw[blue!60!black,thick] (C)--(P)--(D);
            \draw[dashed, thick] (A)--(B);
            \draw[thick] (A)--(C) (B)--(D);
            \draw[blue!60!black,very thick] (C)--(D);
            \ifnum\panel=2
                \node[pivot,label=above right:{}] at (A) {};
            \else
                \node[pivot,label=below left:{}] at (A) {};
            \fi
            \node[pivot,label=below right:{}] at (B) {};
            \ifnum\panel=2
                \node[pivot,label=right:{}] at (C) {};
                \node[pivot,label=above left:{}] at (D) {};
                \node[pivot,label=above left:{}] at (P) {};
            \else
                \ifnum\panel=1
                    \node[pivot,label=below:{}] at (C) {};
                \else
                    \node[pivot,label=left:{}] at (C) {};
                \fi
                \node[pivot,label=right:{}] at (D) {};
                \node[pivot,label=above:{}] at (P) {};
            \fi
        \end{scope}
    }
\end{tikzpicture}
    \caption{Three configurations of a four-bar mechanism and its coupler curve (orange).}
    \label{fig:coupler}
\end{figure}
\vspace{-10pt}

A four-bar mechanism consists of four fixed-length edges, or \textit{links}, joined
cyclically, with one link held fixed. Such a configuration is not rigid; it may be configured in a continuum of ways and enjoys one-degree-of-freedom motion. Placing a rigid body, like a triangle, on the opposite edge of the ground link models this motion via the path traced out by the furthest vertex. This path is the coupler curve of a four-bar mechanism. Despite its simplicity as a $1$-DOF planar linkage, its interpolation problem is already very difficult. 

The conjectural answer of $1442$ to Alt's problem was first reported  in $1992$ by Morgan, Sommese, and Wampler \cite{MSW} in their celebrated paper, \textit{Complete Solution of the Nine-Point Path Synthesis Problem for Four-Bar Linkages}. By formulating the nine-point path synthesis problem in \textit{isotropic coordinates} (see \Cref{sec:Mechanisms}), they reduced the numerical start system from size $7^8=5,764,801$ to just $286,720$, placing the problem within range of homotopy continuation: they followed the start solutions to solve for the $8652$ labeled four-bar mechanisms which trace out the $1442$ coupler curve solutions.  Modern certification software like \cite{Certify,KisunCertification,alphaCertified} can \textit{prove} that the computed solutions correspond to true solutions, thus proving that $1442$ is a lower bound. Proving that there are no others has since been the content of Alt's problem. In the present work, we solve Alt's problem and prove that the numerically established enumeration of Morgan, Sommese, and Wampler is correct.
\begin{theorem}[Alt's Theorem]
\label{thm:alt}
There are $1442$ coupler curves through nine generic points in ${\mathbb{C}}^2$. 
\end{theorem}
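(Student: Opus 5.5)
The proof has two halves: a lower bound of $1442$ and a matching upper bound. The lower bound is the easy half. For one explicit instance of nine points with rational coordinates, we numerically solve the path synthesis system (in whatever coordinates we adopt) and obtain $1442$ approximate solutions giving distinct coupler curves. We then certify each one with Krawczyk or $\alpha$-theory style software \cite{Certify,KisunCertification,alphaCertified}, proving that each lies in a region containing a unique true solution and that these regions are pairwise disjoint. The generic solution count is lower semicontinuous on the locus where the isolated solutions are nondegenerate. So once we know the generic count is finite, nondegeneracy of the certified solutions gives a generic lower bound of $1442$.

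For the upper bound, I would first fix a good parametrization of the space of coupler curves that removes the relabeling symmetry. The $8652 = 6\cdot 1442$ labeled mechanisms come from Roberts cognates and the reflection ambiguity. The abstract indicates the curves themselves form a Zariski open subset of $\mathrm{Gr}(3,6)$, so I would write the nine incidence conditions "the coupler curve passes through $p_i$" as equations in Plücker or local affine coordinates on an $8$-dimensional chart. This can be done in isotropic coordinates: a coupler curve is cut out by a sextic whose coefficients are suitable minors, so each condition is a polynomial in the chart. Since the problem is $8$-dimensional with nine points, one point is spent normalizing the translation.

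Next I would choose auxiliary variables and rewrite the system so that the BKK bound is small, namely mixed volume $5538$. Bernstein's theorem then says that the number of isolated solutions in the torus is at most $5538$, with equality for generic coefficients of that support. Our system is not generic in its support, so I would bound the count by studying the sparse homotopy from a generic system with the same support to ours. Every genuine solution of our system is the endpoint of some homotopy path. It therefore suffices to show that $4096$ branches leave the torus, or go to spurious or degenerate components (for example, degenerate mechanisms outside the open subset of $\mathrm{Gr}(3,6)$), in the limit.

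The main obstacle is this branch count. I would use polyhedral homotopy and tropical analysis: compute the lower facets of the lifted Cayley configuration and identify the cells whose initial systems, after specializing to our coefficients, have solutions forced to zero or infinity. For each such facial initial system I would compute the number of branches it contributes through Puiseux expansions or a mixed-volume count of the facial subsystem, and show the total is $4096 = 2^{12}$. This value suggests a product structure of doubled contributions across independent degenerate coordinate blocks. The subtle point is to show these branches genuinely escape, that is, that the initial systems specialized to our coefficients have no torus solutions. Generically they do have such solutions, so this has to follow from the special coefficient relations inherent in the coupler-curve equations. Combining the two bounds gives $5538-4096=1442\le N\le 1442$.
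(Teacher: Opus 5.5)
Your architecture matches the paper's: Grassmannian coordinates, a reformulation with mixed volume $5538$, the sparse homotopy $(1-\varepsilon)\mathcal F+\varepsilon\mathcal G$, \Cref{lem:bookkeeping}, and a certified instance for the lower bound. The step that carries the whole upper bound, however, is exhibiting $4096$ escaping branches, and as you state it that step would not work.

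\textbf{The escape criterion is inverted.} A branch with tropical vector $\omega\neq\mathbf 0$ has a leading coefficient $\widehat{\mathbf s}\in(\mathbb C^\times)^9$ that solves $\mathcal F^\omega$. So branches can escape only if the specialized facial systems \emph{have} torus solutions. If none did, Bernstein's other theorem (\Cref{prop:BKK}) would give exactly $5538$ solutions. Your remark about genericity is also backwards: generic facial systems have no torus solutions, and it is the special coefficients that create them.

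\textbf{Counting branches is not a facial mixed volume.} For $\omega\neq\mathbf 0$, $\mathcal F^\omega$ is weighted homogeneous, so its torus solutions come in positive-dimensional families. How many branches land on such a family, and with what ramification (e.g.\ $1322=2\cdot 661$ on boundary~1), is decided by higher-order terms of the homotopy, including the perturbation $\mathcal G$. Mixed cells of a lifted Cayley configuration describe the start end of a polyhedral homotopy, not this degeneration at the target.

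\textbf{The missing idea} is the paper's lifting device. For each candidate $\omega=\nu/e$:
\begin{itemize}
\item substitute $\varepsilon=\delta^e$ and $\mathbf s=\mathbf S(\boldsymbol\xi,\mathbf u,\delta)$, which parametrizes a piece of $\mathcal V(\mathcal F^\omega)$ and adds transverse corrections of higher $\delta$-order;
\item perform exact row operations and divisions by powers of $\delta$ to obtain a system $\mathcal L$ regular at $\delta=0$;
\item certify simple roots of the square system $\mathcal K=\mathcal L(\cdot,0)$ at which a guard polynomial is nonzero.
\end{itemize}
By the implicit function theorem, each such root yields a distinct Puiseux branch with tropical vector $\omega$ (\Cref{lem:lift_compatibility}). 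This gives a rigorous \emph{lower} bound on the BKK defect, which is all \Cref{lem:bookkeeping} needs. The number $4096$ then arises as a sum over $26$ regimes, not from the product structure you guess.

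\textbf{Smaller issues.}
\begin{itemize}
\item You never explain how a defect certified at one instance holds for generic data, or why contributions from different regimes add. The paper treats the $63$ Vandermonde coefficients as independent parameters $\boldsymbol\lambda$ and uses persistence of simple roots plus parameter continuation. It also checks that the regimes are disjoint: distinct tropical vectors, or $\xi_2\neq\xi_4$ versus $\xi_2=\xi_4$ for boundary~3.
\item Your dimension count is wrong. $\mathrm{Gr}(3,6)$ is $9$-dimensional, and the problem is nine equations in nine unknowns. Translating the given points removes no unknowns, so there is no $8$-dimensional chart.
\end{itemize}
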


Our approach, like that of \cite{MSW}, uses a thoughtful change of coordinates. While classical formulations solve for the  $8652$ labeled four-bar mechanisms which draw the $1442$ coupler curves, we instead solve for the coupler curves directly. We do this by providing a parametrization of the space $\mathcal C$ of coupler curves by the Grassmannian $\textrm{Gr}(3,6)$. It is helpful to think of $\textrm{Gr}(3,6)$ as the space of nets of conics. From this point of view, it becomes clear that $\dim(\mathcal C) = 9$. 

We formulate the nine interpolation conditions in an affine chart of $\textrm{Gr}(3,6)$ and produce an immediate reduction from $286,720$ as an upper bound for Alt's problem to just $14001$, given by a \textit{Kouchnirenko bound} (see \cite{Kouchnirenko1976}). Another linear-algebraic change of coordinates reduces this further to the \textit{BKK bound} of $6064$ (see \Cref{prop:BKK}). A final birational change of variables rewrites the coordinates as quotients of Pl\"ucker coordinates obtaining the final reduction to a BKK upper bound of $5538$. This is just $2^{12} = 4096$ larger than the hoped-for answer of Alt's problem. 

We have no explanation for why the deficit $4096$ is a power of two. Rather, we discover $4096$ as  
{
\newcommand{\tp}{\!+\!}
\begin{align*}
&4096=
\underbrace{2\cdot661}_{\mathcal B_1}
\tp\underbrace{1\cdot335}_{\mathcal B_2}
\tp\underbrace{\left(1\cdot1648\tp2\cdot271\right)}_{\mathcal B_3}
\tp\underbrace{\left(1\cdot6\tp2\cdot7\tp3\cdot18\tp5\cdot6\right)}_{\mathcal B_4}
\tp\underbrace{\left(3\cdot6\tp10\cdot2\tp6\cdot2\right)}_{\mathcal B_5}
\tp\underbrace{4\cdot4}_{\mathcal B_6}\\
&\tp\underbrace{4\cdot4}_{\mathcal B_7}
\tp\underbrace{2\cdot2}_{\mathcal B_8}
\tp\underbrace{1\cdot8}_{\mathcal B_9}
\tp\underbrace{4\cdot2}_{\mathcal B_{10}}
\tp\underbrace{\left(6\cdot1\tp10\cdot1\right)}_{\mathcal B_{11}}
\tp\underbrace{\left(3\cdot2\tp4\cdot1\right)}_{\mathcal B_{12}}
\tp\underbrace{3\cdot2}_{\mathcal B_{13}}
\tp\underbrace{4\cdot1}_{\mathcal B_{14}}
\tp\underbrace{1\cdot2}_{\mathcal B_{15}}
\tp\underbrace{1\cdot2}_{\mathcal B_{16}}
\tp\underbrace{1\cdot1}_{\mathcal B_{17}}
\tp\underbrace{1\cdot2}_{\mathcal B_{18}}.
\end{align*}
}
This partition is tropical and comes from a bookkeeping procedure attached to a sparse degeneration. Specifically, we form a straight-line homotopy in $\varepsilon$ from a generic polynomial system with the same support as ours to the system with $1442$ solutions. In the limit, $4096$ of those solutions leave the relevant, non-compact solution space. Each does so at some asymptotic rate in its coordinates with respect to $\varepsilon$, called the \textit{tropical vector} of that branch. Provably accounting for $4096$ branches via their tropical vectors produces the partition above: each term represents the contribution of one tropical solution via the product of its count and a multiplicity; terms are grouped if their solutions escape the solution space to the same boundary stratum. Provably performing this accounting requires meticulous bookkeeping.

The key in proving the numbers above lies in Bernstein's \textit{other theorem}, which establishes that the failure to reach the BKK bound lies in these tropical counts, and that such solutions must degenerate to boundary loci associated to \textit{facial systems} of the original equations. By analyzing these facial systems symbolically and introducing sufficiently many variables to track higher-order differential information about these paths, we construct square polynomial systems whose simple solutions are responsible for the reduction from the BKK bound. 
Certification plays two complementary roles in our work. Certifying
solutions of the interpolation system gives a lower bound on the number
of coupler curves. Certifying solutions of the 
systems describing branches of a degeneration with a particular tropical vector gives a lower bound on the number of branches which escape when the generic sparse system is degenerated to an instance of our formulation. Hence, these provide an upper bound on the number of coupler curves. Simple solutions
of these systems persist under small changes of the
parameters, so a single certified instance supplies the required lower
bounds for generic parameters. This hybrid certification procedure successfully closes the gap from $5538$ to $1442$ by accounting for all $4096$ escaping solution branches of the homotopy.

\subsection{AI Statement and Discussion}
Generative AI tools, principally OpenAI’s ChatGPT, were used during this project to assist with literature searches, exploratory calculations, the construction and debugging of computational scripts, and the editing of the manuscript. In particular, they assisted in organizing the boundary analysis, constructing the compatibility systems systematically, and implementing the numerical certification calculations. All mathematical arguments, computational formulations, and reported results were independently checked by the author, who takes full responsibility for the contents of the paper.

\subsection{Outline of paper} \Cref{sec:Mechanisms} introduces four-bar mechanisms, isotropic coordinates, and coupler curves.  \Cref{sec:NetsToCouplerAndBack} develops the correspondence between $\textrm{Gr}(3,6)$ and the space of coupler curves. \Cref{sec:SparseReduction} formulates the nine-point path synthesis problem in the coordinates of $\textrm{Gr}(3,6)$. It also modifies this polynomial system to reach a mixed volume of $5538$. \Cref{sec:TropicalBookKeeping} gives our main theoretical framework for certifying sparse degree-loss via tropical accounting. \Cref{sec:BoundaryDetails} provides the census of the partition of $4096$ via the tropical data of the polynomial system, and formulates the square systems whose numerical certifications provide lower bounds for these deficits, therefore producing upper bounds for the answer to Alt's problem. That section concludes with a proof of \Cref{thm:alt}.

\section*{Acknowledgements} The author was supported by NSERC discovery grant RGPIN-2023-03551.
 
\section{Isotropic coordinates, mechanisms, and coupler curves}
\label{sec:Mechanisms}
One of the main insights of this work is the birational identification of coupler curves with the space of nets of conics:
\[
\{\text{Coupler curves of four-bar mechanisms}\} \leftrightarrow \textrm{Gr}(3,\textrm{Sym}^2((\mathbb{C}^3)^*)) \cong \textrm{Gr}(3,6)
\]
The purpose of this section is to make that correspondence explicit. We then provide a formulation of the nine-point path synthesis problem in these coordinates.

\subsection{Isotropic coordinates}
Fix the complex plane $\mathbb{C}_{x,y}^2$ with coordinates $x,y$. \mydef{Isotropic coordinates} for $\mathbb{C}_{x,y}^2$ are given by the following complex linear change of coordinates:
\begin{equation}
\mydef{z} = x+iy \quad \quad \quad \quad \quad \quad \mydef{w} = x-iy.
\end{equation}
We denote the plane in isotropic coordinates as $\mathbb{C}_{z,w}^2$. Counterintuitively, perhaps, $z$ and $w$ need not be conjugate: this occurs precisely when $x$ and $y$ are real. Thus, the real plane $\mathbb{R}_{x,y}^2$
is identified with
\[
\mydef{\mathbb{R}^2_{\textrm{iso}}} = \{(z,w)\in\mathbb{C}_{z,w}^2  \mid  w=\overline z\}.
\]
A real vector $(x,y)$ can therefore be represented by the single
complex number $z=x+iy$, with its second isotropic coordinate
recovered as $w=\overline z$. In general, however, $z$ and $w$ are  independent. 

A major benefit of these coordinates is that the squared length
becomes the isotropic product:
\[
x^2+y^2=zw.
\]
Thus a vector with isotropic coordinates $(u,\widetilde u)$
has squared length $r$ precisely when $u\widetilde u=r$.
A sequence of $k$ vectors
with isotropic coordinates
$(u_1,\widetilde u_1),\ldots,(u_k,\widetilde u_k)$ forms a closed loop when
\begin{equation}
\label{eq:loops}
\sum_{j=1}^k u_j=0,\qquad
\sum_{j=1}^k \widetilde u_j=0.
\end{equation}
On the real locus these equations are conjugate, but 
over $\mathbb C$, they are independent conditions. See \cite{Wampler1996} for more background on isotropic coordinates. 

\subsection{Four-bar mechanisms} We give a very brief description of the anatomy of a four-bar mechanism. \Cref{fig:fourbar} shows a four-bar mechanism  with notation for its parts:
\begin{itemize}
\item $\mydef{A}$ and $\mydef{B}$ are the fixed pivots in $\mathbb{R}_{x,y}^2$. The \mydef{ground link} $AB$ never moves.
\item The points $\mydef{C}$ and $\mydef{D}$ are \mydef{moving pivots}. The \mydef{side links} are $AD$ and $BC$.
\item The side links are connected by the ground link and the \mydef{coupler link} $CD$. 
\item The coupler link is an edge of the \mydef{coupler triangle} $\Delta DpC$. The vertex $\mydef{p}$ is  the \mydef{coupler point}.
\item The lengths of the links are fixed: $
|AD|^2 = \mydef{r_1}, |DC|^2 = \mydef{r_2},$ and $|BC|^2 = \mydef{r_3}
$.
\item We assign notation to the isotropic coordinates of the links:
\[
(u_0,\widetilde{u}_0) = \overrightarrow{AB} 
\quad\quad (u_1,\widetilde{u}_1) = \overrightarrow{AD} \quad \quad
 (u_2, \widetilde{u}_2) = \overrightarrow{DC} \quad \quad (u_3, \widetilde{u}_3) = \overrightarrow{BC}
\]
\item The shape of the coupler triangle is specified by constants $\mydef{e_1},\mydef{e_2}$ through
\[
z(p)-z(D)=e_1\bigl(z(C)-z(D)\bigr),\qquad
w(p)-w(D)=e_2\bigl(w(C)-w(D)\bigr).
\]
\end{itemize}

\begin{figure}[!htpb]
\centering
\includegraphics{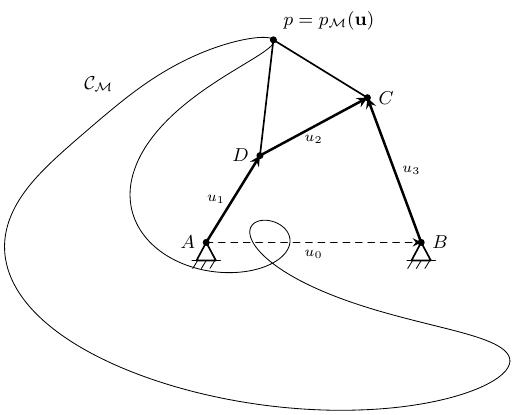}
\caption{The anatomy of a four-bar mechanism and notation for its parts.}
\label{fig:fourbar}
\end{figure}

Thus a (labeled) four-bar mechanism $\mydef{\mathcal M}$ is identified by the coordinates 
\[
\mathcal M \leftrightarrow (A,B,r_1,r_2,r_3,e_1,e_2)
\in \mydef{\mathbb{M}} = 
\underbrace{(\mathbb{C}_{z,w}^2)^2}_{\text{fixed pivots}}
\times
\underbrace{\mathbb{C}_r^3}_{\text{squared link lengths}}
\times
\underbrace{\mathbb{C}_e^2}_{\text{coupler triangle}}
\cong\mathbb{C}^9,
\]
and the coupler curve associated to $\mathcal M$ is denoted $\mydef{\mathcal C_\mathcal M}$.

The \mydef{fixed-length equations} are
\begin{equation}
\label{eq:lengths}
u_i\widetilde u_i=r_i,\quad i=1,2,3,
\end{equation}
and the \mydef{closure equations} are
\begin{equation}
\label{eq:linkclosureequations} 
u_1+u_2-u_3=u_0,\qquad
\widetilde u_1+\widetilde u_2-\widetilde u_3=\widetilde u_0.
\end{equation}
The coupler point $p = \mydef{p_{\mathcal M}(\textbf{u})}$ has isotropic coordinates depending on the mechanism $\mathcal M$ and the configuration $\mydef{\textbf{u}} = (u_1,\widetilde u_1, u_2, \widetilde u_2, u_3, \widetilde u_3)$:
\begin{equation}
\label{eq:couplerpointequation}
p_{\mathcal M}(\textbf{u}) = (z(p_{\mathcal M}(\textbf{u})),w(p_{\mathcal M}(\textbf{u}))) =(A_z+u_1+e_1u_2,A_w+\widetilde u_1+e_2\widetilde u_2).
\end{equation}
For a generic fixed mechanism $\mathcal M \in \mathbb{M}$, $u_0,\widetilde u_0$ are the constants $B_z-A_z$ and $B_w-A_w$, respectively, whereas
the configuration variables $\textbf{u}$ vary subject to these
constraints. The set of all tuples satisfying the fixed-length and closure conditions of a fixed mechanism $\mathcal M$ is the \mydef{configuration space} of $\mathcal M$, denoted $\mydef{E_{\mathcal M}}$. It is the vanishing of the ideal
\[
\mydef{\mathcal I_{\mathcal M}} \subseteq 
\mathbb{C}[u_1,\widetilde u_1, u_2, \widetilde u_2, u_3, \widetilde u_3] = \mathbb{C}[\textbf{u}]
\]
generated by the five equations \eqref{eq:lengths}-\eqref{eq:linkclosureequations} in the six \mydef{configuration variables} $\textbf{u}$. This configuration space determines the coupler curve $\mathcal C_{\mathcal M}$ precisely: it is the Zariski closure of $E_{\mathcal M}$
under the map 
\[
\textbf{u} \mapsto p_{\mathcal M}(\textbf{u}) =(A_z+u_1+e_1u_2,A_w+\widetilde u_1 + e_2 \widetilde u_2).
\]
 One can obtain a formula for its defining equation by eliminating the configuration variables $u_i,\widetilde u_i$ from the ideal describing the graph of $p_{\mathcal M}$:
\[
\mathcal I_\mathcal M + \langle A_z+u_1+e_1u_2-z,A_w+\widetilde u_1 + e_2 \widetilde u_2 -w \rangle \subseteq \mathbb{C}[\textbf{u},z,w].
\] This is the task of standard symbolic implicitization \cite[Chapter 3, \S3]{CoxLittleOShea2015}. The resulting ideal in $\mathbb{C}[z,w]$ is generated by a unique polynomial $\mydef{F_{\mathcal M}}$ up to scaling which cuts out the coupler curve $\mathcal C_{\mathcal M}$.  It has bidegree $(3,3)$, as we will also see from our formula for it in \Cref{sec:SparseReduction}. Thus, we write it as
\[
{F_{\mathcal M}(z,w)}=\sum_{i=0}^3\sum_{j=0}^3 \mydef{c_{ij}}z^iw^j.
\]
The sixteen coefficients determine a point
$[c_{00}:\cdots:c_{33}]\in\mathbb P^{15}$.
Let $\mydef{\mathcal C}$ be the Zariski closure of these coefficient
points as the mechanism varies. Thus $\mathcal C$ is the space of 
coupler-curve equations and their limits in $\mathbb{P}^{15}$. We identify $\mathcal C$ with the space of coupler curves and denote the rational map $\mathcal M \mapsto \mathcal C_{\mathcal M}$ by
\[
\mydef{\varphi}: \mathbb{M} \dashrightarrow \mathcal C.
\]

We define $\mydef{\mathcal E_{\mathbb{M}}}$ to be the space of mechanism-configuration pairs $(\mathcal M, \textbf{u})$, and $\mydef{\mathcal U_{\mathcal C}}$ to be the space of pairs $(C,p)$ consisting of a coupler curve $C \in \mathcal C$ and a point $p \in C$ on it.  We summarize the discussion in this section by the commutative diagram: 

\begin{figure}
\[
\begin{tikzcd}[row sep=large, column sep=large]
\mathcal E_{\mathbb M}
  \arrow[r, dashed] \arrow[d]
&
\mathcal U_{\mathcal C}
  \arrow[d]
\\
\mathbb M
  \arrow[r, dashed]
&
\mathcal C
\end{tikzcd}
\qquad\qquad
\begin{tikzcd}[row sep=large, column sep=large]
E_{\mathcal M}\ni\mathbf u
  \arrow[r, mapsto] \arrow[d]
&
p_{\mathcal M}(\mathbf u)\in\mathcal C_{\mathcal M}
  \arrow[d]
\\
\mathcal M
  \arrow[r, mapsto]
&
\mathcal C_{\mathcal M}
\end{tikzcd}
\]
\caption{A commutative diagram describing how the configuration space of a mechanism parametrizes a coupler curve.}
\label{fig:mechanism_commutative}
\end{figure}
where the top entries of the right diagram are fibres identified with their second factors. 
\begin{example}
\label{ex:hesse_mechanism}
Let $\zeta=(1+i\sqrt3)/2$. We take fixed pivots
$A=(1/2,\sqrt3/2)$ and $B=(-1,0)$ in Cartesian coordinates.
In isotropic coordinates, the mechanism data are
\[
\mathcal M=\bigl((\zeta,\overline\zeta),(-1,-1),2/3,2/3,2/3,\zeta,\overline\zeta\bigr).
\]
The ground link has length $\sqrt3$, all three moving links and the coupler triangle edges have
length $\sqrt{2/3}$. This mechanism and its coupler curve are displayed in \Cref{fig:hesse_mechanism}. We call this the \mydef{Hesse mechanism}, anticipating its
construction from a member of the Hesse pencil of cubics in
\Cref{sec:NetsToCouplerAndBack}.
\begin{figure}[!htpb]
\begin{center}
\includegraphics[scale=0.5]{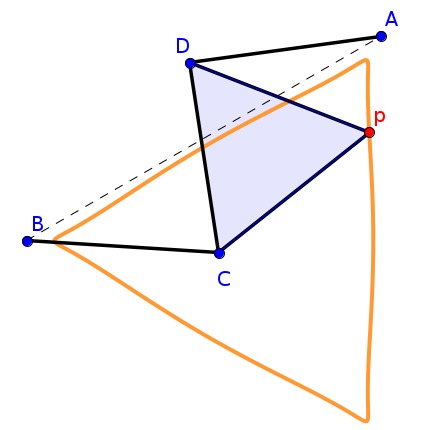}
\caption{The Hesse mechanism and its coupler curve.}
\label{fig:hesse_mechanism}
\end{center}
\end{figure}

Here $u_0=-1-\zeta$ and $\widetilde u_0=-1-\overline\zeta$.
The configuration ideal is therefore
\[
\mathcal I_{\mathcal M}
=
\left\langle
\begin{aligned}
&u_1\widetilde u_1-\tfrac23,\quad
 u_2\widetilde u_2-\tfrac23,\quad
 u_3\widetilde u_3-\tfrac23,\\
&u_1+u_2-u_3+1+\zeta,\quad
 \widetilde u_1+\widetilde u_2-\widetilde u_3+1+\overline\zeta
\end{aligned}
\right\rangle
\subseteq\mathbb C[\textbf{u}].
\]
The coupler point has isotropic coordinates
$p_{\mathcal M}(\textbf{u})=
(\zeta+u_1+\zeta u_2,\overline\zeta+\widetilde u_1+\overline\zeta\widetilde u_2)$.
Consequently, the ideal of its graph is
\[
\mathcal J_{\mathcal M}
=
\mathcal I_{\mathcal M}
+
\left\langle
\zeta+u_1+\zeta u_2-z,\,
\overline\zeta+\widetilde u_1+\overline\zeta\widetilde u_2-w
\right\rangle
\subseteq\mathbb C[\textbf{u},z,w].
\]
Eliminating the configuration variables gives
\[
\mathcal J_{\mathcal M}\cap\mathbb C[z,w]
=
\left\langle z^3w^3-2z^2w^2+4zw+z^3+w^3-1\right\rangle.
\]
Thus the coupler curve has Cartesian equation
\[
(x^2+y^2)^3-2(x^2+y^2)^2+4(x^2+y^2)+2x^3-6xy^2-1=0.
\]
\end{example}

\section{From nets of conics to coupler curves and back}
\label{sec:NetsToCouplerAndBack}
Consider  a three-dimensional space of conics in $\mathbb{P}^2$, called a \textit{net}. We explain how to find a distinguished basis (\Cref{lem:nicebasis}), construct a mechanism from an ordering of this basis (\Cref{thm:net_to_coupler}), and we prove  that the resulting coupler curve does not depend on that ordering. Conversely, we map  a mechanism to a net (\Cref{thm:mechanism_to_net}), whose associated coupler curve agrees with that of the original mechanism. Using the classical fact that a generic coupler curve is realized by six labeled four-bar mechanisms, we prove that the map from nets to coupler curves is generically injective (\Cref{cor:net_to_coupler_injective}).  It follows that all mechanisms realizing a generic coupler curve determine the same net, giving the inverse correspondence from coupler curves to nets. The main result of this section is that $\mathcal C$ is birational to $\textrm{Gr}(3,6)$ (\Cref{thm:coupler_to_net}). Finally, we argue that for nine generic points in the plane, none of the coupler curves through them lie outside of the locus where this correspondence is defined. 

\subsection{Nets to coupler curves}
In this section, we construct a coupler curve from a generic net
of conics. The idea is to construct, from such a net, an analogue of the configuration space $E_{\mathcal M}$ of the previous section.

Let $V = \mathbb{C}^3$ and consider the space of quadratic forms $\operatorname{Sym}^2(V^*) \cong \mathbb{C}^6$, identified with the affine cone over the five-dimensional projective space of conics. Let $\mydef{X}=(X_0,X_1,X_2)^T$ be homogeneous coordinates for $\mathbb P_{X}^2=\mathbb{P}_{\mathbb{C}}^2$ and consider an element
\[\mydef{\Lambda}=\langle \mydef{q_1},\mydef{q_2},\mydef{q_3} \rangle = \operatorname{span}\left\{\underbrace{X^T\mydef{M_1}X}_{q_1(X)},\underbrace{X^T\mydef{M_2}X}_{q_2(X)},\underbrace{X^T\mydef{M_3}X}_{q_3(X)}\right\} \in \textrm{Gr}(3,\operatorname{Sym}^2(V^*)) \cong \textrm{Gr}(3,6)
\] where the $M_i$ are symmetric $3 \times 3$ matrices. The space $\Lambda$ is a \mydef{net of conics} in $\mathbb P_{X}^2$. 

\begin{lemma}
\label{lem:nicebasis}
A generic net
$\Lambda\in\textrm{Gr}(3,\operatorname{Sym}^2(V^*))$
has a basis $\{Q_1,Q_2,Q_3\}$ of the form
\[
Q_i(X)=\ell_i(X)^2-\rho_iX_0^2,\qquad i=1,2,3,
\]
where each $\rho_i$ is nonzero,
$\{\ell_1,\ell_2,\ell_3\}$ is a basis of $V^*$, and $
\ell_1+\ell_2+\ell_3=X_0.$
\end{lemma}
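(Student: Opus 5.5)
The idea is to absorb the distinguished conic $X_0^2$ into the net and use rank-one conics. Given $\Lambda$, set $\Lambda^+=\Lambda+\langle X_0^2\rangle\subseteq\operatorname{Sym}^2(V^*)$, which is $4$-dimensional whenever $X_0^2\notin\Lambda$. A basis as in the statement exists exactly when $\Lambda^+$ contains three squares $\ell_1^2,\ell_2^2,\ell_3^2$ with the following properties:
\begin{itemize}
\item the $\ell_i$ form a basis of $V^*$;
\item $X_0=\sum a_i\ell_i$ with every $a_i\neq0$;
\item no $\ell_i^2$ lies in $\Lambda$.
\end{itemize}
Indeed, after rescaling $\ell_i\mapsto a_i\ell_i$ we get $\ell_1+\ell_2+\ell_3=X_0$. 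We then write $\ell_i^2=Q_i+\rho_iX_0^2$ with $Q_i\in\Lambda$, which is possible because $\ell_i^2\in\Lambda^+$. Here $\rho_i\neq 0$ because $\ell_i^2\notin\Lambda$. The $Q_i$ are independent because $\ell_1^2,\ell_2^2,\ell_3^2,X_0^2$ are independent: squares of three independent linear forms together with the square of a fourth form with all coordinates nonzero are linearly independent, as one checks in the basis $\ell_i$, where the squares are three of the diagonal monomials and $X_0^2$ has nonzero cross terms.

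So I would prove the lemma by a dimension count on the parametrization. The parameter space is
\[
P=\{(\ell_1,\ell_2,\rho_1,\rho_2,\rho_3)\}\cong\mathbb C^{6}\times\mathbb C^{3},
\]
with $\ell_3:=X_0-\ell_1-\ell_2$. Define the rational map
\[
\Phi:P\dashrightarrow\textrm{Gr}(3,6),\qquad \Phi(\ell_1,\ell_2,\rho)=\langle \ell_i^2-\rho_iX_0^2\rangle_{i=1,2,3}.
\]
Both sides have dimension $9$. By Chevalley's theorem the image of $\Phi$ is constructible, so it suffices to show that $\Phi$ has finite generic fibres. Then its image is $9$-dimensional, hence dense and therefore contains a Zariski open subset. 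On the open set where $\rho_i\neq0$ and the $\ell_i$ are independent, every point in the image has the required basis by construction.

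For finiteness of fibres I would use the Veronese surface $\nu(\mathbb P^2)\subset\mathbb P(\operatorname{Sym}^2 V^*)=\mathbb P^5$ of rank-one conics, which has degree $4$. If $\Lambda=\Phi(p)$, then $\mathbb P(\Lambda^+)$ is a $\mathbb P^3$ containing the four rank-one points $[X_0^2]$ and $[\ell_i^2]$. Every $p'$ in the fibre gives squares $(\ell_i')^2\in\Lambda^+$. A $\mathbb P^3$ meets the surface $\nu(\mathbb P^2)$ in a finite set whenever the intersection is zero-dimensional, since the expected dimension is $3+2-5=0$. Given the point set, the $\ell_i'$ are determined up to sign and permutation. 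The normalization $\sum\ell_i'=X_0$ then fixes the scalings up to finitely many choices, and the $\rho_i'$ are determined by $\Lambda$. Hence the fibre is finite as soon as $\mathbb P(\Lambda^+)\cap\nu(\mathbb P^2)$ is finite.

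The main obstacle is verifying that this intersection is finite, and not positive-dimensional, for at least one $p$; finiteness is then an open condition and holds for generic $p$. I would handle this with an explicit check. Take $\ell_i=X_i$ for $i=1,2$, take $\ell_3=X_0-X_1-X_2$, and choose small generic $\rho_i$. Then solve $\operatorname{rank}\le1$ on $\Lambda^+$, meaning that all $2\times2$ minors of $\sum t_jN_j$ vanish for a basis $N_j$ of $\Lambda^+$, and confirm that there are exactly four solutions. Alternatively, I would check that the Jacobian of $\Phi$ at that point has rank $9$; this also gives dominance directly. The Hesse mechanism of Example~\ref{ex:hesse_mechanism}, or any concrete rational example, could serve as the witness.
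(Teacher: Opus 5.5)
Your proof is correct, but it takes a different route from the paper's.

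\textbf{The paper's route.} It constructs the basis directly. It restricts the cubic $\det[M_1X\ M_2X\ M_3X]$ to the line $X_0=0$ and takes its three roots $P_i$. The member of $\Lambda$ singular at $P_i$ is a pair of parallel affine lines, $\ell_i^2-\rho_iX_0^2$. Independence follows by restricting to $X_0=0$, the open conditions are checked on one explicit net, and the normalization is a rescaling via $X_0=\sum c_i\ell_i$.

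\textbf{Your route.} You show that a $9$-dimensional family of nets which visibly have such a basis maps dominantly onto $\textrm{Gr}(3,6)$. Generic finiteness of fibres comes from recovering the $\ell_i$ as rank-one points of $\mathbb P(\Lambda+\langle X_0^2\rangle)$.

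\textbf{What each buys.} The paper's argument is constructive; it is how the basis is computed in the Hesse example. It also identifies the $Q_i$ intrinsically as the members of $\Lambda$ singular on the line at infinity. Yours avoids the genericity bookkeeping about the cubic. Its fibre analysis also gives, essentially for free, that the basis is unique up to reordering: the fibre of $\Phi$ restricted to your open set, over any of its image points, is exactly the six permutations of $(\ell_1,\ell_2,\ell_3)$ together with the corresponding $\rho_i$. This is what the paper's later ``six orderings'' arguments tacitly use.

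\textbf{Your ``main obstacle'' needs no computer check.} Note that $\Lambda^+=\langle\ell_1^2,\ell_2^2,\ell_3^2,X_0^2\rangle$ does not depend on $\rho$, and any three of $\ell_1,\ell_2,\ell_3,X_0$ are linearly independent. A point $[\ell^2]$ lies in $\mathbb P(\Lambda^+)$ if and only if every conic in the dual plane through $[\ell_1],[\ell_2],[\ell_3],[X_0]$ passes through $[\ell]$. The pencil of such conics has base locus exactly these four points. Hence $\mathbb P(\Lambda^+)\cap\nu(\mathbb P^2)$ is exactly four points for every parameter in your open set, and finiteness holds everywhere there, not just generically.

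\textbf{Minor cleanups.} Restrict $\Phi$ to the open set where the $\ell_i$ are independent and $\rho_i\neq0$ before taking the image. Also, $[\ell^2]$ determines $\ell$ only up to a nonzero scalar, not merely up to sign. Independence of the $\ell_i'$ then makes $\sum\ell_i'=X_0$ fix those scalars uniquely. Finally, $X_0^2\notin\Lambda$ rules out $[\ell_i']=[X_0]$.
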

\begin{proof}
Choose a basis $q_i(X)=X^TM_iX$ of $\Lambda$.
A conic $\sum_i\lambda_iq_i$ is singular at $P$ precisely
when $[M_1P\ M_2P\ M_3P]\lambda=0$, where
$\lambda=(\lambda_1,\lambda_2,\lambda_3)^T$.
Restricting $\det[M_1X\ M_2X\ M_3X]$ to $X_0=0$
gives a binary cubic.
For a generic net, it has three distinct roots $P_1,P_2,P_3$. At each root the matrix has rank two, selecting a unique
singular conic $Q_i$ up to scalar. Generically, each $Q_i$
has rank two and neither component is the line at infinity.
Its components are therefore distinct parallel affine lines,
so completing the square gives
$Q_i=\ell_i^2-\rho_iX_0^2$ with $\rho_i\ne0$.
The restrictions of the $\ell_i$ to $X_0=0$ have distinct
zeros, so their squares are linearly independent binary
quadratics. Thus the $Q_i$ form a basis of $\Lambda$.

The conditions above, together with independence of the
$\ell_i$, are nonempty open conditions: they hold for the
net generated by $X_1^2-\rho_1X_0^2$,
$X_2^2-\rho_2X_0^2$, and
$(X_0-X_1-X_2)^2-\rho_3X_0^2$, with nonzero $\rho_i$.
Indeed, its determinant on $X_0=0$ is proportional to
$X_1X_2(X_1+X_2)$. Now write $X_0=\sum_i c_i\ell_i$.
Each $c_i$ is nonzero, since otherwise restriction to
$X_0=0$ would give a dependence between two linear forms
with distinct zeros. Replacing $\ell_i,\rho_i,Q_i$ by
$c_i\ell_i,c_i^2\rho_i,c_i^2Q_i$, respectively, gives
the required normalization.
\end{proof}

\begin{example}
\label{ex:hesse_net}
We continue \Cref{ex:hesse_mechanism}. Consider the Hesse pencil
$G_t(X)=X_0^3+X_1^3+X_2^3-3tX_0X_1X_2$.
The partial derivatives of $G_2$ span the net
$\Lambda=\langle q_1,q_2,q_3\rangle$, where
\[
q_1=X_0^2-2X_1X_2,\qquad
q_2=X_1^2-2X_0X_2,\qquad
q_3=X_2^2-2X_0X_1.
\]
For the associated symmetric matrices, we obtain
\[
\det[M_1X\ M_2X\ M_3X]
=-(X_0^3+X_1^3+X_2^3+X_0X_1X_2),
\]
which defines another member of the Hesse pencil.
Writing $\zeta=(1+i\sqrt3)/2$ as before, its intersection with
$X_0=0$ consists of
$P_1=(0:1:-1)$, $P_2=(0:1:\overline\zeta)$, and
$P_3=(0:1:\zeta)$.
The construction in \Cref{lem:nicebasis} gives the normalized
linear forms
\[
\ell_1=\frac{X_0-X_1-X_2}{3},\qquad
\ell_2=\frac{X_0+\zeta X_1+\overline\zeta X_2}{3},\qquad
\ell_3=\frac{X_0+\overline\zeta X_1+\zeta X_2}{3}.
\]
These are linearly independent and satisfy
$\ell_1+\ell_2+\ell_3=X_0$.
Set $Q_i=\ell_i^2-\frac29X_0^2$. We obtain
\[
\begin{pmatrix}Q_1\\Q_2\\Q_3\end{pmatrix}
=
\frac19
\begin{pmatrix}
-1&1&1\\
-1&-\overline\zeta&-\zeta\\
-1&-\zeta&-\overline\zeta
\end{pmatrix}
\begin{pmatrix}q_1\\q_2\\q_3\end{pmatrix}.
\]
\begin{figure}[!htpb]
\begin{center}
\includegraphics[scale=0.45]{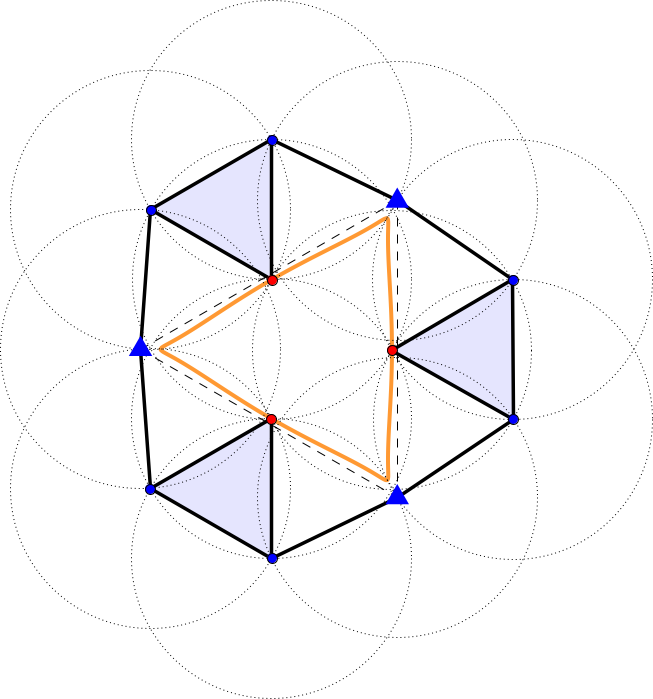}
\end{center}
\caption{The Roberts cognates of the Hesse mechanism. The circles indicate fixed leg lengths.}
\label{fig:cognates}
\end{figure}
Choosing a distinct ordering of this basis produces one of six distinct labeled mechanisms. The label swap symmetry produces three unlabeled mechanisms which draw the same coupler curve, called Roberts cognates (see \Cref{fig:cognates}). All three have $r_1=r_2=r_3=2/3$ and
$(e_1,e_2)=(\zeta,\overline\zeta)$.
They are related by rotations through $120^\circ$ about the
origin, which preserve the coupler curve. 
The other three orderings reverse the linkage labeling:
they exchange $A$ with $B$ and $D$ with $C$, replacing
$(e_1,e_2)$ by $(\overline\zeta,\zeta)$.
Thus there are six labeled mechanisms and three Roberts cognates.
\end{example}

Let $\mydef{Y}=(Y_0,Y_1,Y_2)^T$ be homogeneous coordinates for a second copy  $\mathbb P_{Y}^2=\mathbb{P}_{\mathbb{C}}^2$. We associate to the net $\Lambda$ the variety
\[
\mydef{E_\Lambda}
=\{([X],[Y])\in\mathbb P_{X}^2 \times  \mathbb P_{Y}^2 \mid 
X^TM_iY=0,\ i=1,2,3\},
\]
which is independent of the chosen basis of $\Lambda$. The variety $E_{\Lambda}$ will play the role of the configuration space $E_{\mathcal M}$. The bilinear forms associated to the basis $\{Q_i\}_{i=1}^3$ being zero will translate to the links of a mechanism constructed in the following theorem having constant length in isotropic coordinates.

\begin{theorem}[Nets to coupler curves]
\label{thm:net_to_coupler}
For a generic net $\Lambda$, each ordering of its distinguished basis from \Cref{lem:nicebasis} determines a labeled mechanism
$\mathcal M$ and an isomorphism
\[
E_\Lambda\cap\{X_0Y_0\ne0\}
\longrightarrow E_{\mathcal M}
\]
whose composition with $p_{\mathcal M}$ is
\[
([X],[Y])\longmapsto
\left(\frac{X_1}{X_0},\frac{Y_2}{Y_0}\right).
\]
Consequently, the Zariski closure of this image is a
four-bar coupler curve $\mydef{\mathcal C_\Lambda}$,
independent of the chosen ordering.
\end{theorem}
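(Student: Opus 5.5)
The plan is to dehomogenize the bilinear equations of $E_\Lambda$ in the distinguished basis of \Cref{lem:nicebasis} and read them off directly as the fixed-length and closure equations \eqref{eq:lengths}--\eqref{eq:linkclosureequations}, after an affine rescaling. Fix an ordering $(Q_1,Q_2,Q_3)$ with $Q_i=\ell_i^2-\rho_iX_0^2$. The polarization of $Q_i$ is $\ell_i(X)\ell_i(Y)-\rho_iX_0Y_0$. Since the $Q_i$ span $\Lambda$, $E_\Lambda$ is cut out by these three bilinear forms; this also explains why $E_\Lambda$ does not depend on the basis.

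\textbf{Step 1: dehomogenize.} On $\{X_0Y_0\neq 0\}$ set $a_i=\ell_i(X)/X_0$ and $b_i=\ell_i(Y)/Y_0$. Because $\ell_1,\ell_2,\ell_3$ is a basis of $V^*$ with $\ell_1+\ell_2+\ell_3=X_0$, the map $[X]\mapsto(a_1,a_2,a_3)$ is an isomorphism of $\{X_0\ne0\}$ onto the affine plane $a_1+a_2+a_3=1$, and likewise for $[Y]$ and the $b_i$. In these coordinates $E_\Lambda\cap\{X_0Y_0\neq0\}$ becomes
\[
a_ib_i=\rho_i \ (i=1,2,3),\qquad a_1+a_2+a_3=1,\qquad b_1+b_2+b_3=1.
\]

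\textbf{Step 2: express the coupler-point map.} Inverting the linear forms gives affine expressions
\[
X_1/X_0=\alpha+\beta_1a_1+\beta_2a_2,\qquad Y_2/Y_0=\alpha'+\beta_1'b_1+\beta_2'b_2,
\]
obtained after eliminating $a_3=1-a_1-a_2$ and $b_3=1-b_1-b_2$. For a generic net, $\beta_1\beta_1'\neq0$. This is an open condition, so it suffices to check it on the explicit net in the proof of \Cref{lem:nicebasis}, or on the Hesse net of \Cref{ex:hesse_net}.

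\textbf{Step 3: define the mechanism.} Set
\[
u_j=\beta_1a_j \ (j=1,2),\quad u_3=-\beta_1a_3,\qquad \widetilde u_j=\beta_1'b_j \ (j=1,2),\quad \widetilde u_3=-\beta_1'b_3 .
\]
Take $u_0=\beta_1$, $\widetilde u_0=\beta_1'$, $r_i=\beta_1\beta_1'\rho_i$, $e_1=\beta_2/\beta_1$, $e_2=\beta_2'/\beta_1'$. For the pivots take $A=(\alpha,\alpha')$ and $B=A+(u_0,\widetilde u_0)$. Then the system of Step 1 is exactly \eqref{eq:lengths}--\eqref{eq:linkclosureequations}. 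Also $A_z+u_1+e_1u_2=\alpha+\beta_1a_1+\beta_2a_2=X_1/X_0$, and similarly for $w$. So \eqref{eq:couplerpointequation} composed with the map is $([X],[Y])\mapsto(X_1/X_0,Y_2/Y_0)$.

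\textbf{Step 4: isomorphism.} The map $([X],[Y])\mapsto\mathbf u$ is the restriction of an invertible affine-linear map. Its inverse recovers $a_i,b_i$, and hence $X,Y$, from $\mathbf u$. The inverse carries $E_{\mathcal M}$ into $E_\Lambda\cap\{X_0Y_0\ne0\}$, because the defining equations correspond one-to-one. Hence the map is an isomorphism onto $E_{\mathcal M}$. Genericity of $\mathcal M$ (for instance $\rho_i\neq0$, which holds by \Cref{lem:nicebasis}) also needs checking; again this can be done on one example.

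\textbf{Step 5: independence of the ordering.} The map $([X],[Y])\mapsto(X_1/X_0,Y_2/Y_0)$ and the set $E_\Lambda\cap\{X_0Y_0\ne0\}$ are both defined without reference to any basis. So the image, and therefore its Zariski closure, is independent of the ordering. By Step 4, this closure equals the closure of $p_{\mathcal M}(E_{\mathcal M})$, which is the coupler curve $\mathcal C_{\mathcal M}$.

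I expect the main obstacle to be the genericity bookkeeping. This means confirming $\beta_1\beta_1'\neq0$ and that the resulting $\mathcal M$ lies in the open set where $\mathcal C_{\mathcal M}$ and $F_{\mathcal M}$ are defined as in \Cref{sec:Mechanisms}, for all six orderings at once. I would handle this by exhibiting the explicit example and invoking openness.
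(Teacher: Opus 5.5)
Your proposal is correct and follows essentially the same route as the paper. It uses the same coordinates $a_i=\ell_i(X)/X_0$, the same mechanism (your $\beta_1,\beta_1',\alpha,\alpha'$ are the paper's $u_0=\alpha_1-\alpha_3$, $\widetilde u_0=\beta_1-\beta_3$, $A=(\alpha_3,\beta_3)$) and the same linear inverse. Your Hesse-net witness for $\beta_1\beta_1'\neq 0$ and your basis-free argument for independence of the ordering make explicit two points the paper leaves implicit: it simply restricts to the open set $u_0\widetilde u_0\neq0$.
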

\begin{proof}
On the open subset of $E_{\Lambda}$ where $X_0Y_0 \neq 0$, set $\mydef{a_i} = \frac{\ell_i(X)}{X_0}$ and $\mydef{\widetilde a_i} = \frac{\ell_i(Y)}{Y_0}$. Observe that 
\[
a_i \widetilde a_i = \frac{\ell_i(X)\ell_i(Y)}{X_0Y_0} = \rho_i
\]
since $\ell_i(X)\ell_i(Y) -\rho_iX_0Y_0 = 0$ by the constraint that $([X],[Y]) \in E_{\Lambda}$. 

\Cref{lem:nicebasis} guarantees that $\{\ell_1,\ell_2,\ell_3\}$ is a basis for linear forms, and so we write $X_1$ and $X_2$ in terms of it:
\[
X_1 = \alpha_1\ell_1+\alpha_2\ell_2+\alpha_3\ell_3, \quad \quad X_2 = \beta_1 \ell_1 + \beta_2 \ell_2 + \beta_3 \ell_3
\]
and so the prescribed coordinate functions become 
\[
z = \frac{X_1}{X_0} = \alpha_1 a_1 + \alpha_2 a_2 +\alpha_3 a_3 \quad \quad w = \frac{Y_2}{Y_0} = \beta_1 \widetilde a_1 + \beta_2 \widetilde a_2 + \beta_3\widetilde a_3.
\] Equipped with this notation, we construct a mechanism
$\mathcal M$ and a map
\[
E_\Lambda\dashrightarrow E_{\mathcal M}
\]
whose composition with the coupler-point map is
$
([X],[Y])\longmapsto
\left(\frac{X_1}{X_0},\frac{Y_2}{Y_0}\right).
$
First, define the mechanism 
\[
\begin{aligned}
A&=(\alpha_3,\beta_3),&
B&=(\alpha_1,\beta_1),\\
u_0&=\alpha_1-\alpha_3,&
\widetilde u_0&=\beta_1-\beta_3,\\
r_i&=u_0\widetilde u_0\rho_i,\quad i=1,2,3,\\
e_1&=\frac{\alpha_2-\alpha_3}{u_0},&
e_2&=\frac{\beta_2-\beta_3}{\widetilde u_0}.
\end{aligned}
\]
We restrict to the open subset of nets where
$u_0\widetilde u_0\ne0$.
For $([X],[Y])\in E_\Lambda$ with $X_0Y_0\ne0$, define
its configuration coordinates by
\[
u_1=u_0a_1,\qquad \widetilde u_1=\widetilde u_0\widetilde a_1,\qquad u_2=u_0a_2,\qquad \widetilde u_2=\widetilde u_0\widetilde a_2,\qquad u_3=-u_0a_3,\qquad \widetilde u_3=-\widetilde u_0\widetilde a_3.
\]\Cref{lem:nicebasis} guarantees that  $\ell_1,\ell_2,\ell_3$ satisfy $
\ell_1(X)+\ell_2(X)+\ell_3(X) = X_0$, implying 
\[
a_1+a_2+a_3 = 1 \quad \quad \text{ and symmetrically,} \quad \quad \widetilde a_1+ \widetilde a_2+ \widetilde a_3 = 1.
\]
The identities $a_i\widetilde a_i=\rho_i$ give the
fixed-length equations, while
$\sum_i a_i=\sum_i\widetilde a_i=1$ gives the closure equations.
Moreover,
\[
\begin{aligned}
A_z+u_1+e_1u_2
&=\alpha_3+(\alpha_1-\alpha_3)a_1
             +(\alpha_2-\alpha_3)a_2
 =\frac{X_1}{X_0},\\
A_w+\widetilde u_1+e_2\widetilde u_2
&=\beta_3+(\beta_1-\beta_3)\widetilde a_1
            +(\beta_2-\beta_3)\widetilde a_2
 =\frac{Y_2}{Y_0}.
\end{aligned}
\]
Thus these formulas define the required map to
$E_{\mathcal M}$, including its coupler coordinates.

Conversely, a configuration determines
\[
(a_1,a_2,a_3)
=\frac{1}{u_0}(u_1,u_2,-u_3),
\qquad
(\widetilde a_1,\widetilde a_2,\widetilde a_3)
=\frac{1}{\widetilde u_0}
(\widetilde u_1,\widetilde u_2,-\widetilde u_3).
\]
Since the $\ell_i$ form a basis of $V^*$, these determine
unique vectors $X,Y$ with $\ell_i(X)=a_i$ and
$\ell_i(Y)=\widetilde a_i$. Closure gives $X_0=Y_0=1$,
and the fixed-length equations give $([X],[Y])\in E_\Lambda$.
This construction is a regular inverse, so the chart
$X_0Y_0\ne0$ of $E_\Lambda$ is isomorphic to $E_{\mathcal M}$.
\end{proof}
We denote the rational map sending a generic net $\Lambda$ to the coupler curve $\mathcal C_{\Lambda}$ by 
\[
\mydef{\Phi}: \textrm{Gr}(3,6) \dashrightarrow \mathcal C.
\]
By analogy with the space $\mathcal E_{\mathbb{M}}$ of configuration spaces of mechanisms, define 
\[
\mydef{\mathcal E_{\textrm{Gr}}}=\{(\Lambda,[X],[Y]) \mid ([X],[Y]) \in E_{\Lambda}\}.
\]
The commutative diagram summarizing this discussion is 
\begin{figure}
\[
\begin{tikzcd}[row sep=large, column sep=large]
\mathcal E_{\textrm{Gr}}
  \arrow[r, dashed] \arrow[d]
&
\mathcal U_{\mathcal C}
  \arrow[d]
\\
\textrm{Gr}(3,6)
  \arrow[r, dashed]
&
\mathcal C
\end{tikzcd}
\qquad
\begin{tikzcd}[row sep=large, column sep=large]
E_\Lambda\ni([X],[Y])
  \arrow[r, mapsto] \arrow[d]
&
\left(\frac{X_1}{X_0},\frac{Y_2}{Y_0}\right)
  \in\mathcal C_\Lambda
  \arrow[d]
\\
\Lambda
  \arrow[r, mapsto]
&
\mathcal C_\Lambda
\end{tikzcd}
\]
\caption{A commutative diagram describing how the configuration space of a net parametrizes a coupler curve.}
\label{fig:net_commutative}
\end{figure}
Here the right diagram describes a generic net $\Lambda$,
with the fibers identified with their second factors.
The top assignment is defined on $X_0Y_0\ne0$. Viewed in $\mathcal U_{\mathcal C}$,  its
image has Zariski closure $\{\mathcal C_\Lambda\} \times \mathcal C_{\Lambda}$.

\begin{remark}[Ordered bases of nets and Roberts cognates]
The mechanism $\mathcal M$ in \Cref{thm:net_to_coupler}
depends on the ordering of the distinguished conics
$Q_1,Q_2,Q_3$, but the coupler curve does not.
For a generic net, the six orderings produce six distinct
labeled mechanisms with the same coupler curve.
Exchanging $Q_1$ and $Q_3$ reverses the linkage labeling,
exchanging $A$ with $B$ and $D$ with $C$, but does not truly change the mechanism. Hence, there are three four-bar mechanisms which give the same coupler curve:  the three classical Roberts cognates \cite{Roberts1875}.
\end{remark}

\begin{remark}
Generically, $E_{\Lambda}$ is a genus-one cubic over $\mathbb{C}$, equipped with the fixed-point-free involution $[X] \leftrightarrow [Y]$, and the map $\left(\frac{X_1}{X_0}, \frac{Y_2}{Y_0} \right)$ has some nice properties with respect to this involution. The same is true for the configuration space $E_{\mathcal M}$, the involution $(u_i,\widetilde u_i)  \leftrightarrow \left(\frac{u_0}{\widetilde u_0}\widetilde u_i,\frac{\widetilde  u_0}{u_0} u_i \right)$, and the map $\textbf{u} \mapsto p_{\mathcal M}(\textbf{u})$ plays nicely with respect to it. This is a general phenomenon. We write the result below, but leave its proof to the appendix because ultimately it does not contribute to the validity of the proof of Alt's problem. 

\begin{lemma}
\label{lem:genusone}
Let $E$ be a smooth projective curve of genus one over $\mathbb C$,
$\tau$ a fixed-point-free involution, and $z,w$ rational functions
with pole divisors
\[
\mydef{D_z}=P_0+P_1+P_2,\qquad
\mydef{D_w}=\tau(P_0)+\tau(P_1)+\tau(P_2),
\]
where these six points are distinct.  Then the Zariski closure of the image of the map $p \mapsto (z(p),w(p))$ on $E$ is a coupler curve.
\end{lemma}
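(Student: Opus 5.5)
The plan is to realize $E$, together with $z$ and $w$, as the curve $E_\Lambda$ of some net $\Lambda$, so that the map $p\mapsto(z(p),w(p))$ becomes the map $([X],[Y])\mapsto(X_1/X_0,Y_2/Y_0)$ of \Cref{thm:net_to_coupler}.

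\textbf{Step 1: the plane model.} Since $\tau$ is a fixed-point-free involution on a genus-one curve, it is translation by a $2$-torsion point. The pole divisor of $w\circ\tau$ is therefore $\tau(D_w)=D_z$, so both $z$ and $w\circ\tau$ lie in the three-dimensional space $L(D_z)$. Assuming $1,z,w\circ\tau$ are linearly independent, they form a basis of $L(D_z)$. This basis gives an embedding $\psi:E\to\mathbb P^2_X$ as a smooth plane cubic, with
\[
\psi(p)=[1:z(p):(w\circ\tau)(p)].
\]
The embedding uses that $D_z$ is a degree-three divisor on a genus-one curve. If instead $w\circ\tau=a+bz$, the image of $(z,w)$ lies in a degenerate limit, and I would handle this case by a limiting argument inside the closure $\mathcal C$. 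Now define $\Psi(p)=(\psi(p),\psi(\tau p))\in\mathbb P^2_X\times\mathbb P^2_Y$. Then
\[
X_1/X_0=z(p),\qquad Y_2/Y_0=(w\circ\tau)(\tau p)=w(p).
\]

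\textbf{Step 2: three bilinear relations.} For $M\in\operatorname{Sym}^2(V^*)$, the function $p\mapsto\psi(p)^TM\psi(\tau p)$ is a section of $\mathcal L\otimes\tau^*\mathcal L$, where $\mathcal L=\mathcal O_E(D_z)$. This line bundle has degree $6$ and a six-dimensional space of sections. Because $M$ is symmetric and $\tau^2=\mathrm{id}$, the section is $\tau$-invariant for the natural linearization of $\mathcal L\otimes\tau^*\mathcal L$. The invariant sections are pullbacks of sections of a degree-three line bundle on the genus-one curve $E/\langle\tau\rangle$, so they form a three-dimensional space. The linear map from the six-dimensional space $\operatorname{Sym}^2(V^*)$ to this three-dimensional space therefore has kernel of dimension at least $3$. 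Choose a three-dimensional subspace $\Lambda$ of the kernel. Then $\Psi(E)\subseteq E_\Lambda$.

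\textbf{Step 3: the main obstacle.} I expect this step to be the hardest: showing that $\Lambda$ is generic enough for \Cref{lem:nicebasis} and \Cref{thm:net_to_coupler} to apply, and that $\Psi(E)$ equals the relevant component of $E_\Lambda$.
\begin{itemize}
\item Equality of curves: both $\Psi(E)$ and $E_\Lambda$ are cut out by three $(1,1)$-forms, so each has bidegree $(3,3)$. A degree comparison should then force equality when $E_\Lambda$ is a curve.
\item Genericity of $\Lambda$: I would first argue by a dimension count. The data $(E,\tau,z,w)$ up to isomorphism form a family of dimension $9$: one modulus for $E$, a finite choice of $\tau$, and four dimensions each for the functions $z$ and $w$ in three-dimensional spaces, up to the relevant identifications. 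This family maps to $\textrm{Gr}(3,6)$ by the construction above. The construction is inverted on generic nets by taking $E=E_\Lambda$ with its swap involution. Hence the map is dominant, and a generic $(E,\tau,z,w)$ yields a net in the domain of $\Phi$. For such data, \Cref{thm:net_to_coupler} identifies the closure of the image with $\mathcal C_\Lambda$, a coupler curve.
\item Special data: all remaining $(E,\tau,z,w)$ give images that are limits of coupler curves, hence lie in $\mathcal C$ by its definition as a closure.
\end{itemize}
The distinctness of the six points $P_0,P_1,P_2,\tau(P_0),\tau(P_1),\tau(P_2)$ is what should rule out the degenerate behaviour, by keeping $X_0Y_0\neq0$ on a dense open subset.
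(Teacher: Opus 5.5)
Your Steps 1 and 2 are sound where they apply. The $\tau$-invariance argument, which descends $\psi(p)^TM\psi(\tau p)$ to the quotient $E/\langle\tau\rangle$, is a nice way to produce a net with $\Psi(E)\subseteq E_\Lambda$, and the generic case can plausibly be finished along the lines you sketch. The gap is that the lemma is asserted for \emph{every} $(E,\tau,z,w)$ with six distinct poles. Your argument only reaches data whose net is generic enough for \Cref{lem:nicebasis} and \Cref{thm:net_to_coupler}, and the excluded data are not vacuous.

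Take $w=z\circ\tau$, with $P_0,P_1,P_2$ chosen so that the six points are distinct. Every hypothesis holds, yet $w\circ\tau=z$, so $\psi$ maps $E$ onto a line and no net exists. In the construction below, $w\circ\tau\in\operatorname{span}(1,z)$ is exactly the case $e=f$, where the coordinate change of \Cref{thm:mechanism_to_net} is singular and $\Psi$ is undefined.

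Your fallback, that such images ``are limits of coupler curves, hence lie in $\mathcal C$,'' is not carried out. It would need irreducibility of the space of data and continuity of the implicit $(3,3)$ equation across the family, including data where $p\mapsto(z(p),w(p))$ fails to be birational onto its image. Even if completed, it gives only membership in the closure $\mathcal C$, which also contains degenerate limits. It does not give a mechanism tracing the curve. Likewise, distinctness of the six poles gives three distinct points of $\psi(E)$ on $X_0=0$. It does not give the other conditions of \Cref{lem:nicebasis}, nor $u_0\widetilde u_0\ne0$, nor that $E_\Lambda$ is a curve, which your bidegree comparison needs.

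The missing idea, and the paper's route, is to build the mechanism directly on $E$ with no net in between. Because $\tau$ is translation by a $2$-torsion point, $\tau(P_i)\oplus\tau(P_j)=P_i\oplus P_j$. Hence there are functions $a,b,c$ with $\operatorname{div}(a)=\tau(P_0)+\tau(P_1)-P_0-P_1$, and similarly for $b$ (indices $0,2$) and $c$ (indices $1,2$). These form a basis of $L(D_z)$, while $1/a,1/b,1/c$ form a basis of $L(D_w)$. Normalize so that $a+b+c=1$ and $\kappa_a/a+\kappa_b/b+\kappa_c/c=1$. Then the pairs $(L_za,L_w\kappa_a/a)$, $(L_zc,L_w\kappa_c/c)$ and $(-L_zb,-L_w\kappa_b/b)$ have constant products, which are the fixed-length equations, and they satisfy the loop equations identically. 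Expanding $z$ and $w$ in these bases reads off $A$, $B$, $e$, $f$. The nonvanishing of $L_z$, $L_w$ and of the $\kappa$'s follows from the pole conditions and the distinctness of the six points.

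This construction is uniform in the data and covers $w=z\circ\tau$, where it gives $e=f$. In fact $a,b,c$ are the intrinsic counterparts of the functions $\ell_i(X)/X_0$ in \Cref{thm:net_to_coupler}. So completing your route for all data would essentially require constructing them anyway.
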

\end{remark}

\subsection{Coupler curves to nets}

\begin{theorem}[Mechanisms to nets]
\label{thm:mechanism_to_net}
For generic $\mathcal M \in \mathbb{M}$, write $\rho_i = \frac{r_i}{u_0\widetilde u_0}$ and define
\[
L=u_1+u_2-u_3,\qquad
Q_i=u_i^2-\rho_iL^2,
\quad i=1,2,3.
\]
In the coordinates
\[
X_0=L,\qquad
X_1=A_zL+u_0(u_1+e_1u_2),\qquad
X_2=A_wL+\widetilde u_0(u_1+e_2u_2),
\]
the span of these conics defines a net $\Lambda_{\mathcal M}$ such that $
\mathcal C_{\Lambda_{\mathcal M}}=\mathcal C_{\mathcal M}
$.
The assignment $\mathcal M\mapsto\Lambda_{\mathcal M}$
is a rational map $\mathbb M\dashrightarrow\textrm{Gr}(3,6)$.
\end{theorem}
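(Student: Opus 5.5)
The plan is to recognise the conics $Q_i$ as exactly the distinguished basis of \Cref{lem:nicebasis} for the net they span. We then run the construction of \Cref{thm:net_to_coupler} on this basis and check that it returns the mechanism $\mathcal M$ itself. Once the mechanism is recovered, $\mathcal C_{\Lambda_{\mathcal M}}=\mathcal C_{\mathcal M}$ follows from \Cref{thm:net_to_coupler}.

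\textbf{Step 1: the net is well defined.} Regard $u_1,u_2,u_3$ as a basis of linear forms on $V=\mathbb C^3$. The three forms $X_0,X_1,X_2$ are linear in $u_1,u_2,u_3$ with coefficients rational in $\mathcal M$. For generic $\mathcal M$ their coefficient determinant is nonzero; this is a nonvanishing polynomial condition, to be checked at one explicit mechanism such as the Hesse mechanism. Under that condition $X_0,X_1,X_2$ are coordinates on $V$. Rewriting the $Q_i$ in these coordinates gives symmetric matrices whose entries are rational in $\mathcal M$. Their span is therefore a point of $\textrm{Gr}(3,6)$ depending rationally on $\mathcal M$, which gives the rational map $\mathbb M\dashrightarrow\textrm{Gr}(3,6)$.

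\textbf{Step 2: identify the distinguished basis.} Set $\ell_1=u_1$, $\ell_2=u_2$, $\ell_3=-u_3$. Then
\[
\ell_1+\ell_2+\ell_3=L=X_0,\qquad Q_i=\ell_i^2-\rho_iX_0^2,
\]
with $\rho_i\neq0$ for generic $\mathcal M$. So the $Q_i$ already have the normal form of \Cref{lem:nicebasis}.

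To see that they are \emph{the} basis used there, note what the proof of \Cref{lem:nicebasis} shows for a generic net. The singular members with singular point on the line $X_0=0$ are, up to scalar, exactly three conics, one for each root of the binary cubic. Each $Q_i$ is such a member, since its singular point is $\{\ell_i=X_0=0\}$. The normalization $\sum\ell_i=X_0$ then pins down the $\ell_i$, and hence the $\rho_i$. The only remaining freedom is a simultaneous sign change of all $\ell_i$, which is excluded because the sum must equal $X_0$ rather than $-X_0$.

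\textbf{Step 3: run the construction and recover $\mathcal M$.} Apply \Cref{thm:net_to_coupler} with the ordering $(Q_1,Q_2,Q_3)$. Expanding
\[
X_1=A_z(\ell_1+\ell_2+\ell_3)+u_0\ell_1+u_0e_1\ell_2
\]
gives $\alpha_1=A_z+u_0=B_z$, $\alpha_2=A_z+u_0e_1$ and $\alpha_3=A_z$. In the same way $X_2$ gives $\beta_1=B_w$, $\beta_2=A_w+\widetilde u_0e_2$ and $\beta_3=A_w$. Substituting into the formulas of \Cref{thm:net_to_coupler} returns the original data:
\begin{itemize}
\item the fixed pivots $A$ and $B$;
\item $u_0=\alpha_1-\alpha_3$ and $\widetilde u_0=\beta_1-\beta_3$;
\item $e_1=(\alpha_2-\alpha_3)/u_0$ and $e_2=(\beta_2-\beta_3)/\widetilde u_0$;
\item $r_i=u_0\widetilde u_0\rho_i$.
\end{itemize}
So the mechanism produced from $\Lambda_{\mathcal M}$ is $\mathcal M$, and \Cref{thm:net_to_coupler} gives $\mathcal C_{\Lambda_{\mathcal M}}=\mathcal C_{\mathcal M}$.

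\textbf{Main obstacle: genericity of $\Lambda_{\mathcal M}$.} \Cref{lem:nicebasis} and \Cref{thm:net_to_coupler} require $\Lambda_{\mathcal M}$ to lie in the open set of generic nets, whereas $\mathbb M\dashrightarrow\textrm{Gr}(3,6)$ need not be dominant a priori. I would first show that the image is dense. Start from a generic net $\Lambda$, build its mechanism $\mathcal M$ via \Cref{thm:net_to_coupler}, and apply the present construction. Here $u_i$, $L$, $X_0$, $X_1$, $X_2$ correspond to $u_0\ell_i$ (up to sign), $u_0X_0$, and the original coordinates up to the common factor $u_0$. Hence $\Lambda_{\mathcal M}=\Lambda$. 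The composite therefore restricts to the identity on a dense open subset of $\textrm{Gr}(3,6)$, so the rational map is dominant. The preimage of the open set of generic nets is then a nonempty open subset of $\mathbb M$. For $\mathcal M$ in that preimage, intersected with the open set from Step 1, the argument above applies.
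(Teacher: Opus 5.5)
Your proof is correct, but it is organized differently from the paper's.

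\textbf{The paper's route.} The paper never invokes \Cref{thm:net_to_coupler} or the distinguished basis. It polarizes $Q_i$ to $u_i\widetilde u_i-\rho_i(u_1+u_2-u_3)(\widetilde u_1+\widetilde u_2-\widetilde u_3)$ and notes that on the isotropic loop locus this is exactly $u_i\widetilde u_i-r_i$. It also notes that there $X_0=u_0$ and $Y_0=\widetilde u_0$, so $X_1/X_0$ and $Y_2/Y_0$ are literally the coupler-point formulas. Conversely, it rescales any point of $E_{\Lambda_{\mathcal M}}$ with $X_0Y_0\neq0$ onto the loop locus. Two further inputs complete the identification of the images: independence of the $Q_i$, obtained by restricting to $L=0$, and the determinant $u_0\widetilde u_0(e_1-e_2)$ of the coordinate change. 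You could simply compute that determinant rather than test it at the Hesse mechanism. This argument needs no genericity of $\Lambda_{\mathcal M}$ inside $\textrm{Gr}(3,6)$, no uniqueness of the distinguished basis, and no dominance.

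\textbf{Your route.} You have to pay for all three of those ingredients. The dominance step is essentially the computation of \Cref{lem:recover_net}; the paper proves that later, but it does not depend on this theorem, so nothing is circular. In exchange you prove more: the reconstruction of \Cref{thm:net_to_coupler} applied to $\Lambda_{\mathcal M}$ with the ordering $(Q_1,Q_2,Q_3)$ returns $\mathcal M$ itself. Combined with \Cref{lem:recover_net}, this shows that the generic fibre of $\Psi$ is exactly the six labeled mechanisms. That is the ``$6:1$'' in the paper's final diagram, which the paper asserts without proof.

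\textbf{Two small repairs.}
\begin{enumerate}
\item In Step 1, the span is a point of $\textrm{Gr}(3,6)$ only once you know the $Q_i$ are linearly independent. This follows from the normal form in Step 2: their restrictions to $X_0=0$ are the squares $\ell_i^2$ of linear forms with distinct zeros. So either move Step 1 after Step 2 or cite it.
\item The three roots of the binary cubic undergo monodromy, so the net-to-mechanism assignment is not a single-valued rational map. State dominance as ``the image of $\Psi$ contains the dense open set of generic nets,'' not as ``a composite is the identity.'' Nonemptiness and openness of $\Psi^{-1}(U)$ then follow as you say.
\end{enumerate}
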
\begin{proof}
The bilinear form associated to $Q_i$ is
\[
u_i\widetilde u_i
-\rho_i(u_1+u_2-u_3)
(\widetilde u_1+\widetilde u_2-\widetilde u_3).
\]
On the locus satisfying the isotropic loop equations, this becomes
\[
u_i\widetilde u_i-\rho_i u_0\widetilde u_0
=u_i\widetilde u_i-r_i,
\]
and so the vanishing of the three bilinear
forms becomes equivalent to the fixed-length equations.
The conics are linearly independent: their restrictions
to $L=0$ are $u_1^2$, $u_2^2$, and $(u_1+u_2)^2$,
which are linearly independent binary quadratics.

The stated coordinate change as a matrix equation is
\[
\begin{pmatrix}
X_0\\X_1\\X_2
\end{pmatrix}
=
\begin{pmatrix}
1&1&-1\\
A_z+u_0&A_z+e_1u_0&-A_z\\
A_w+\widetilde u_0&A_w+e_2\widetilde u_0&-A_w
\end{pmatrix}
\begin{pmatrix}
u_1\\u_2\\u_3
\end{pmatrix}.
\]
 It has determinant
$u_0\widetilde u_0(e_1-e_2)$, so it is invertible
for a generic mechanism.
Apply the same  change to the $\widetilde{\cdot}$ variables
to define $Y$.
On the loop equation locus, $X_0=u_0$ and
$Y_0=\widetilde u_0$, so
\[
\frac{X_1}{X_0}=A_z+u_1+e_1u_2,\qquad
\frac{Y_2}{Y_0}=A_w+\widetilde u_1+e_2\widetilde u_2.
\]These are the coupler-point equations, so the configuration
$\mathbf u\in E_{\mathcal M}$ produces the same coupler point
as the corresponding pair
$([X],[Y])\in E_{\Lambda_{\mathcal M}}$.
Conversely, for any such pair with $X_0Y_0\ne0$, apply the
inverse coordinate change and choose the unique representatives
satisfying the isotropic loop equations
\[
u_1+u_2-u_3=u_0,\qquad
\widetilde u_1+\widetilde u_2-\widetilde u_3
=\widetilde u_0.
\]
Such representatives exist because $X_0Y_0\ne0$.
The bilinear equations then give the fixed-length equations.
Thus the two images coincide, and taking Zariski closures gives
$\mathcal C_{\Lambda_{\mathcal M}}=\mathcal C_{\mathcal M}$.
Finally, the conic coefficients and the inverse coordinate
change depend rationally on $\mathcal M$, proving rationality
of the assignment.
\end{proof}
We denote the map sending a mechanism $\mathcal M$ to a net $\Lambda_{\mathcal M}$ by
\[
\mydef{\Psi}: \mathbb{M} \dashrightarrow \textrm{Gr}(3,6).
\] 
In \Cref{thm:net_to_coupler}, we showed that all six of the orderings of the distinguished basis for $\Lambda$ produce a mechanism with a common coupler curve. Now, we show that any of those mechanisms gets mapped to  $\Lambda$ under $\Psi$. 
\begin{lemma}[Recovery of a net]
\label{lem:recover_net}
Let $\Lambda$ be a generic net, and let
$\mathcal M_1,\ldots,\mathcal M_6$ be the labeled mechanisms
obtained from the six orderings of its distinguished basis as in the proof of \Cref{thm:net_to_coupler}.
Then
\[
\Psi(\mathcal M_i)=\Lambda,\qquad i=1,\ldots,6.
\]
Moreover, these six labeled mechanisms are distinct.
\end{lemma}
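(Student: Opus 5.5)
The plan is to compute $\Psi(\mathcal M)$ directly for a mechanism $\mathcal M$ built from an ordering of the distinguished basis, and show that the coordinates $X$ of \Cref{thm:mechanism_to_net} coincide, up to an invertible linear change, with the original coordinates of $\mathbb P^2_X$. Under that identification the conics $Q_i=u_i^2-\rho_iL^2$ of \Cref{thm:mechanism_to_net} become scalar multiples of the distinguished conics $\ell_i^2-\rho_iX_0^2$. Fix the ordering $(Q_1,Q_2,Q_3)$ from \Cref{thm:net_to_coupler}; any other ordering is the same argument after relabeling.

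In \Cref{thm:mechanism_to_net}, the letters $u_1,u_2,u_3$ serve as formal linear coordinates. Substitute $u_1=u_0\ell_1(X)$, $u_2=u_0\ell_2(X)$, $u_3=-u_0\ell_3(X)$, mirroring the configuration coordinates in the proof of \Cref{thm:net_to_coupler}. Then the following hold.
\begin{itemize}
\item $L=u_1+u_2-u_3=u_0(\ell_1+\ell_2+\ell_3)=u_0X_0$.
\item The $\rho_i$ of \Cref{thm:mechanism_to_net} is $r_i/(u_0\widetilde u_0)$, which equals the original $\rho_i$ by the definition $r_i=u_0\widetilde u_0\rho_i$.
\item Hence $u_i^2-\rho_iL^2=u_0^2(\ell_i^2-\rho_iX_0^2)$.
\end{itemize}

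The remaining check is that the new coordinate $X_1^{\mathrm{new}}=A_zL+u_0(u_1+e_1u_2)$ equals $u_0^2X_1$. Substituting $A_z=\alpha_3$, $e_1u_0=\alpha_2-\alpha_3$ and $u_0=\alpha_1-\alpha_3$ gives
\[
u_0\bigl(\alpha_3 u_0X_0+(\alpha_1-\alpha_3)u_0\ell_1+(\alpha_2-\alpha_3)u_0\ell_2\bigr)=u_0^2\bigl(\alpha_1\ell_1+\alpha_2\ell_2+\alpha_3\ell_3\bigr)=u_0^2X_1,
\]
where the first equality uses $X_0=\ell_1+\ell_2+\ell_3$. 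In the same way, $X_2^{\mathrm{new}}=\widetilde u_0u_0\,X_2$, using the $\beta$'s, $A_w=\beta_3$ and $e_2\widetilde u_0=\beta_2-\beta_3$. The $X_2$ line of \Cref{thm:mechanism_to_net} is written with untilded $u$'s, so I read the formula as a linear form in the $u$'s with $\widetilde u_0$ a constant. Consequently $(X_0^{\mathrm{new}},X_1^{\mathrm{new}},X_2^{\mathrm{new}})=(u_0X_0,u_0^2X_1,u_0\widetilde u_0X_2)$. This is a diagonal rescaling of the coordinates, not the identity.

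\textbf{Main obstacle.} Because the coordinates only agree up to this rescaling, the conics transported back would not equal the $Q_i$ on the nose. I expect the intended convention resolves this, and I would fix it in one of two ways:
\begin{itemize}
\item read $\Psi$ as recording the net in coordinates with $X_0,X_1/X_0,X_2/X_0$ normalized the same way as in \Cref{thm:net_to_coupler}, where the coupler map is $(X_1/X_0,Y_2/Y_0)$; or
\item redo the substitution on the affine chart $L=u_0$, i.e.\ $X_0=1$, where the rescaling factors are absorbed by the loop equations.
\end{itemize}
If neither convention works out, I would fall back on the characterization underlying \Cref{thm:mechanism_to_net}. Namely, $\Lambda_{\mathcal M}$ is the net whose bilinear forms cut out, under the coupler map, the same correspondence $E_\Lambda\cap\{X_0Y_0\neq0\}$ as in \Cref{thm:net_to_coupler}. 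A net is determined by its bilinear correspondence, since a generic $E_\Lambda$ spans the three-dimensional space of bilinear forms vanishing on it. This gives $\Psi(\mathcal M_i)=\Lambda$ without tracking scalars.

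For distinctness of the six mechanisms, I would read off the fixed pivots. The mechanism for the ordering $(\sigma(1),\sigma(2),\sigma(3))$ has $A=(\alpha_{\sigma(3)},\beta_{\sigma(3)})$ and $B=(\alpha_{\sigma(1)},\beta_{\sigma(1)})$. For a generic net the three points $(\alpha_j,\beta_j)$ are distinct; this is an open condition, and it can be checked on the example net in the proof of \Cref{lem:nicebasis}. So the ordered pair $(A,B)$ already distinguishes all six orderings, and the six labeled mechanisms are distinct.
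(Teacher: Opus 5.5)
\textbf{There is a genuine gap in the first claim.} Your strategy is the right one, and it is the paper's: substitute $u_1=u_0\ell_1$, $u_2=u_0\ell_2$, $u_3=-u_0\ell_3$. Your conic identity $u_i^2-\rho_iL^2=u_0^2(\ell_i^2-\rho_iX_0^2)$ is correct. The gap is an algebra slip in the coordinate check, and that slip is what creates your ``main obstacle.''

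In $X_1^{\mathrm{new}}=A_zL+u_0(u_1+e_1u_2)$, the term $A_zL=\alpha_3u_0X_0$ carries only one factor of $u_0$. Likewise $u_0u_1=u_0\cdot u_0\ell_1=u_0(\alpha_1-\alpha_3)\ell_1$. Your display therefore has an extra overall factor of $u_0$. Similarly, $\widetilde u_0(u_1+e_2u_2)=u_0(\widetilde u_0\ell_1+e_2\widetilde u_0\ell_2)$, and since $\widetilde u_0=\beta_1-\beta_3$ and $e_2\widetilde u_0=\beta_2-\beta_3$, no factor of $\widetilde u_0$ survives. The correct computation is
\begin{align*}
X_1^{\mathrm{new}}&=\alpha_3u_0X_0+u_0\bigl((\alpha_1-\alpha_3)\ell_1+(\alpha_2-\alpha_3)\ell_2\bigr)=u_0X_1,\\
X_2^{\mathrm{new}}&=\beta_3u_0X_0+u_0\bigl((\beta_1-\beta_3)\ell_1+(\beta_2-\beta_3)\ell_2\bigr)=u_0X_2.
\end{align*}
So $X^{\mathrm{new}}=u_0X$ is a scalar rescaling. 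By homogeneity, $u_i^2-\rho_iL^2=u_0^2Q_i(X)=Q_i(X^{\mathrm{new}})$. In the coordinates used by $\Psi$, the three conics are literally $Q_1,Q_2,Q_3$, hence $\Psi(\mathcal M)=\Lambda$, with no convention issue.

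A quick sanity check would have caught the slip. Your rescaling $(u_0,u_0^2,u_0\widetilde u_0)$ gives $X_1^{\mathrm{new}}/X_0^{\mathrm{new}}=u_0X_1/X_0$. But \Cref{thm:net_to_coupler} and \Cref{thm:mechanism_to_net} both identify this ratio with the coupler coordinate $z$ of the same configuration.

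\textbf{Why the proposed workarounds do not help.} Reinterpreting the convention for $\Psi$ changes the statement rather than proving it. The fallback also fails to close the gap. To use ``a net is determined by its bilinear correspondence,'' you must show $E_{\Psi(\mathcal M)}=E_\Lambda$ in the same coordinates on $\mathbb P^2\times\mathbb P^2$. That means showing the composite $E_\Lambda\to E_{\mathcal M}\to E_{\Psi(\mathcal M)}$ is the identity there, which is exactly the coordinate identity above. Agreement of coupler images alone would not suffice. Invoking it would also be circular, since \Cref{cor:net_to_coupler_injective} is deduced from this lemma. If your non-scalar rescaling $D$ were correct, $E_{\Psi(\mathcal M)}$ would be $(D\times D)(E_\Lambda)$, which is generally a different net, so no convention could rescue the argument.

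\textbf{Distinctness.} Your distinctness argument is fine. No genericity check is needed, though. The matrix with rows $(1,1,1)$, $(\alpha_1,\alpha_2,\alpha_3)$, $(\beta_1,\beta_2,\beta_3)$ is the change of basis between $\{\ell_i\}$ and $\{X_0,X_1,X_2\}$. So the points $(\alpha_i,\beta_i)$ are automatically noncollinear, as the paper observes.
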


\begin{proof}
Fix an ordering of the normalized distinguished basis
\[
Q_i(X)=\ell_i(X)^2-\rho_iX_0^2,\qquad
\ell_1+\ell_2+\ell_3=X_0,
\]
and let $\mathcal M$ be the mechanism reconstructed from this
ordering. Under the substitution
\[
u_1=u_0\ell_1(X),\qquad
u_2=u_0\ell_2(X),\qquad
u_3=-u_0\ell_3(X),
\]
we have
\[
L=u_1+u_2-u_3=u_0X_0.
\]
Since $r_i=u_0\widetilde u_0\rho_i$, the conics defining
$\Psi(\mathcal M)$ become
\[
u_i^2-\frac{r_i}{u_0\widetilde u_0}L^2
=u_0^2Q_i(X),\qquad i=1,2,3.
\]
Thus they span the original net.

The coordinate change defining $\Psi$ also recovers the original
projective coordinates:
\[
\begin{aligned}
L&=u_0X_0,\\
A_zL+u_0(u_1+e_1u_2)
&=u_0\bigl(A_zX_0+u_0(\ell_1+e_1\ell_2)\bigr)
=u_0X_1,\\
A_wL+\widetilde u_0(u_1+e_2u_2)
&=u_0\bigl(A_wX_0+\widetilde u_0(\ell_1+e_2\ell_2)\bigr)
=u_0X_2.
\end{aligned}
\]
Therefore $\Psi(\mathcal M)=\Lambda$. The same argument applies
to every ordering.

To see that the resulting mechanisms are distinct, write
\[
X_1=\sum_i\alpha_i\ell_i,\qquad
X_2=\sum_i\beta_i\ell_i.
\]
The three points $(\alpha_i,\beta_i)$ are noncollinear because
$X_0,X_1,X_2$ form a basis of $V^*$. The six orderings select
the six ordered pairs of these points as the fixed pivots
$(A,B)$, and hence produce six distinct labeled mechanisms.
\end{proof}

\begin{corollary}
\label{cor:net_to_coupler_injective}
The map
\[
\Phi:\textrm{Gr}(3,6)\dashrightarrow\mathcal C,
\qquad
\Lambda\longmapsto\mathcal C_\Lambda,
\]
is generically injective.
\end{corollary}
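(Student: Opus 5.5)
The plan is to use the classical fact, quoted at the start of this section, that a generic coupler curve is traced by exactly six labeled four-bar mechanisms, and to combine it with \Cref{lem:recover_net} and \Cref{thm:mechanism_to_net}. Since $\Phi\circ\Psi=\varphi$ by \Cref{thm:mechanism_to_net}, injectivity of $\Phi$ on a dense open set will follow once every mechanism drawing $\mathcal C_\Lambda$ is shown to be one of the six mechanisms built from $\Lambda$.

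First, I will check that $\Psi$ is dominant. By \Cref{lem:recover_net}, every generic net $\Lambda$ satisfies $\Lambda=\Psi(\mathcal M_1)$, so the image of $\Psi$ contains a dense open subset of $\textrm{Gr}(3,6)$. Consequently, the genericity conditions on nets (those of \Cref{lem:nicebasis}, \Cref{thm:net_to_coupler} and \Cref{lem:recover_net}) pull back to dense open conditions on $\mathbb M$. Conversely, since $\varphi=\Phi\circ\Psi$ with $\Psi$ dominant, the image of $\Phi$ is dense in $\mathcal C$. The locus $\mathcal C^\circ\subseteq\mathcal C$ of curves traced by exactly six labeled mechanisms contains a dense open set, so $\Phi^{-1}(\mathcal C^\circ)$ contains a dense open set of nets.

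Next, I will fix a generic net $\Lambda$ and a second generic net $\Lambda'$ with $\Phi(\Lambda)=\Phi(\Lambda')=C\in\mathcal C^\circ$. By \Cref{lem:recover_net}, $\Lambda$ produces six distinct labeled mechanisms $\mathcal M_1,\dots,\mathcal M_6$. By \Cref{thm:net_to_coupler}, each of them has coupler curve $\mathcal C_\Lambda=C$, and each satisfies $\Psi(\mathcal M_i)=\Lambda$. Because $C$ is drawn by exactly six labeled mechanisms, these six are all of them. Applying the same reasoning to $\Lambda'$ gives six mechanisms $\mathcal M'_j$ with coupler curve $C$ and $\Psi(\mathcal M'_j)=\Lambda'$. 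Each $\mathcal M'_j$ must therefore equal some $\mathcal M_i$, and so $\Lambda'=\Psi(\mathcal M'_j)=\Psi(\mathcal M_i)=\Lambda$.

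The main obstacle is making the genericity bookkeeping honest. The classical six-cognate count is a statement about a generic curve in $\mathcal C$, while the recovery lemma is a statement about a generic net. To connect them, I must confirm that the generic fiber of $\Phi$ over $\mathcal C^\circ$ consists of generic nets, and I will do this using the density argument of the first step. I must also confirm that the mechanisms $\mathcal M'_j$ lie in the domain where $\Psi$ is defined. This holds because they arise from the construction of \Cref{thm:net_to_coupler}, in which $u_0\widetilde u_0\neq0$ and $e_1\neq e_2$ on an open set; the condition $e_1\neq e_2$ is needed for the invertibility of the coordinate change in \Cref{thm:mechanism_to_net}.
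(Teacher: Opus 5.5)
Your argument is correct and is essentially the paper's. Both use \Cref{lem:recover_net} to attach to each generic net six distinct labeled mechanisms with $\Psi(\mathcal M_i)=\Lambda$, and both then invoke the Roberts six-cognate count; the paper argues by contradiction (if $\Lambda\neq\Lambda'$ the two sextuples are disjoint, giving twelve mechanisms), whereas you argue directly (the six mechanisms from $\Lambda$ exhaust all mechanisms drawing the curve, so each $\mathcal M'_j$ equals some $\mathcal M_i$), and your extra genericity bookkeeping soundly makes explicit what the paper leaves implicit in ``two generic nets.''
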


\begin{proof}
Suppose that two generic nets $\Lambda$ and $\Lambda'$ produce
the same coupler curve. By \Cref{lem:recover_net}, each net
produces six distinct labeled mechanisms, and a mechanism
constructed from both nets would satisfy
\[
\Psi(\mathcal M)=\Lambda
\qquad\text{and}\qquad
\Psi(\mathcal M)=\Lambda'.
\]
Thus, if $\Lambda\ne\Lambda'$, the two sets of six mechanisms
are disjoint. The common coupler curve would then be realized
by at least twelve labeled mechanisms. This contradicts the
classical Roberts cognate theorem, according to which a generic
coupler curve is realized by exactly six labeled mechanisms \cite{Roberts1875,ShermanHauensteinWampler2022}.
Hence $\Lambda=\Lambda'$.
\end{proof}

\begin{theorem}[Coupler curves to nets]
\label{thm:coupler_to_net}
The rational map
\[
\Phi:\textrm{Gr}(3,6)\dashrightarrow\mathcal C,
\qquad
\Lambda\longmapsto\mathcal C_\Lambda,
\]
is birational. Its rational inverse is the well-defined map
\[
\mathcal C_{\mathcal M}\longmapsto\Lambda_{\mathcal M}.
\]
\end{theorem}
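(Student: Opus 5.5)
We will establish birationality by combining the generic injectivity of \Cref{cor:net_to_coupler_injective} with a dominance statement, and then identify the inverse explicitly via $\Psi$. The plan is to show that $\Phi\circ\Psi=\varphi$ on a dense open subset of $\mathbb M$, that $\Psi$ is dominant, and that the assignment $\mathcal C_{\mathcal M}\mapsto\Lambda_{\mathcal M}$ is well defined on a dense open subset of $\mathcal C$.

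First, \Cref{thm:mechanism_to_net} gives $\mathcal C_{\Psi(\mathcal M)}=\mathcal C_{\mathcal M}$ for generic $\mathcal M$, i.e.\ $\Phi\circ\Psi=\varphi$ as rational maps. Next, \Cref{lem:recover_net} shows that for a generic net $\Lambda$ the mechanisms $\mathcal M_i$ built in \Cref{thm:net_to_coupler} satisfy $\Psi(\mathcal M_i)=\Lambda$. Hence the image of $\Psi$ contains a dense open subset of $\textrm{Gr}(3,6)$, so $\Psi$ is dominant. Since $\mathcal C$ is by definition the closure of the image of $\varphi=\Phi\circ\Psi$, the map $\Phi$ is dominant onto $\mathcal C$.

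We then deduce birationality. $\Phi$ is a dominant, generically injective rational map between irreducible varieties. Here $\mathcal C$ is irreducible as the closure of the image of the irreducible $\mathbb M$. In characteristic zero, generic injectivity means the generic fiber is a single reduced point, so the field extension $\mathbb C(\mathcal C)\subseteq\mathbb C(\textrm{Gr}(3,6))$ has degree one and $\Phi$ is birational. It follows in particular that $\dim\mathcal C=9$.

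It remains to see that the rational inverse is $\mathcal C_{\mathcal M}\mapsto\Lambda_{\mathcal M}$, and that this is well defined. Suppose $\mathcal M,\mathcal M'$ are generic with $\mathcal C_{\mathcal M}=\mathcal C_{\mathcal M'}$. Then $\Phi(\Lambda_{\mathcal M})=\Phi(\Lambda_{\mathcal M'})$. We need both nets to lie in the open set $U\subseteq\textrm{Gr}(3,6)$ on which $\Phi$ is injective. Because $\Psi$ is dominant, $\Psi^{-1}(U)$ is dense open in $\mathbb M$. The subtlety is that $\mathcal M'$ is only generic among mechanisms with the given coupler curve. We handle this with Roberts' theorem: the generic curve has exactly six labeled mechanisms, and by \Cref{lem:recover_net} these are the six obtained from one net $\Lambda\in U$, all of which map to $\Lambda$ under $\Psi$. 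So $\Lambda_{\mathcal M}$ depends only on $\mathcal C_{\mathcal M}$, and on this open set the assignment equals $\Phi^{-1}$.

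The main obstacle is this last genericity bookkeeping. We must ensure that the open set of coupler curves where we argue is dense in $\mathcal C$, and that every mechanism realizing such a curve avoids the bad loci of $\Psi$. We would handle it by intersecting $\Phi(U)$ with the dense open set of curves satisfying the Roberts cognate count, and by using that the finite-fibered map $\varphi$ sends complements of proper closed sets to sets whose complements are not dense. Together with the rationality of $\Psi$ from \Cref{thm:mechanism_to_net}, this shows the inverse is a rational map.
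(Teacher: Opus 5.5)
Your proposal is correct and follows essentially the paper's argument: $\Phi\circ\Psi=\varphi$ gives dominance of $\Phi$, \Cref{cor:net_to_coupler_injective} upgrades this to birationality, and $\Phi(\Lambda_{\mathcal M})=\mathcal C_{\mathcal M}$ identifies the inverse. You add more explicit genericity bookkeeping: dominance of $\Psi$ so the composition makes sense, the degree-one field extension, and well-definedness via the Roberts count plus \Cref{lem:recover_net} in place of the paper's one-line appeal to uniqueness of the rational inverse. One small wording slip: what you need from $\varphi$ is that images of proper closed subsets of $\mathbb M$ are not dense in $\mathcal C$ (by a dimension count), but intersecting $\Phi(U)$ with the Roberts open set already suffices on its own.
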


\begin{proof}
By \Cref{thm:mechanism_to_net}, $
\Phi\circ\Psi=\varphi.$
Since $\mathcal C$ is the closure of the image of $\varphi$,
this identity shows that $\Phi$ is dominant.
By \Cref{cor:net_to_coupler_injective}, it is generically
injective, and hence birational.

The same identity gives
\[
\Phi(\Lambda_{\mathcal M})=\mathcal C_{\mathcal M}.
\]
Consequently, the rational inverse of $\Phi$ sends
$\mathcal C_{\mathcal M}$ to $\Lambda_{\mathcal M}$.
Its uniqueness shows that this net is independent of the
chosen mechanism.
\end{proof}
\begin{lemma}
\label{lem:general_points_avoid_bad_curves}
Let $\mathcal C^\circ\subseteq\mathcal C$ be any nonempty Zariski
open subset. Every coupler curve through nine generic points
belongs to $\mathcal C^\circ$.
\end{lemma}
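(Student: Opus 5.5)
Let $Z=\mathcal C\setminus\mathcal C^\circ$. Since $\mathcal C$ is birational to $\textrm{Gr}(3,6)$ by \Cref{thm:coupler_to_net}, it is irreducible of dimension $9$, and $Z$ is a proper closed subset, so every irreducible component of $Z$ has dimension at most $8$. The plan is a standard incidence-variety dimension count. Define
\[
\mathcal I_Z=\{(C,p_1,\ldots,p_9)\in Z\times(\mathbb C^2)^9 \mid p_j\in C\ \text{for all } j\},
\]
which is closed in $Z\times(\mathbb C^2)^9$ because membership $p_j\in C$ is the vanishing of the bidegree-$(3,3)$ form with coefficients $[c_{00}:\cdots:c_{33}]$ at $p_j$, bihomogeneous in the coefficients. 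Let $\pi:\mathcal I_Z\to(\mathbb C^2)^9$ be the second projection. The statement is equivalent to $\pi$ not being dominant: then the closure of $\pi(\mathcal I_Z)$ is a proper closed subset, and nine points in its complement lie on no curve of $Z$.

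To bound $\dim\mathcal I_Z$, project to $Z$. For a point $C\in Z$ (a nonzero bidegree-$(3,3)$ polynomial up to scale, a limit of coupler equations), its zero locus in $\mathbb C^2$ is a curve of dimension $1$, since a nonzero polynomial in $z,w$ cuts out a hypersurface. Hence the fibre over $C$ is $V(F_C)^9$, of dimension $9$. Over each component $Z_k$ of $Z$, the fibre-dimension theorem gives $\dim\mathcal I_{Z_k}\le\dim Z_k+9\le 17<18=\dim(\mathbb C^2)^9$. Therefore $\pi(\mathcal I_Z)$ lies in a closed subset of dimension at most $17$, and so it is not dense.

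The one step needing care is that every point of $\mathcal C\subseteq\mathbb P^{15}$ is a nonzero polynomial, so its zero set really is one-dimensional rather than all of $\mathbb C^2$. This holds automatically, since points of projective space have nonzero coefficient vectors. There is one subtlety to check. If some limiting form has all its $z,w$-dependence degenerate, for instance a nonzero constant, its affine zero set is empty, which only helps. So the fibre dimension is at most $9$ in all cases. I expect no real obstacle beyond stating this cleanly and remarking that genericity is taken in $(\mathbb C^2)^9$, as a complement of a proper Zariski closed set.
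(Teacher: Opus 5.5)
Your proposal is correct and follows essentially the same route as the paper: the same incidence variety over $Z=\mathcal C\setminus\mathcal C^\circ$, fibres of dimension at most $9$, hence $\dim\le 17<18$, so the projection to $(\mathbb C^2)^9$ is not dominant. The only difference is that you also justify two points the paper leaves implicit: that the incidence variety is closed, and that each fibre has dimension at most $9$ because every point of $\mathcal C\subseteq\mathbb P^{15}$ is a nonzero polynomial.
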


\begin{proof}
Let $Z=\mathcal C\setminus\mathcal C^\circ$. Since
$\mathcal C$ is irreducible of dimension nine and $Z$ is
proper, we have $\dim Z\leq 8$. Consider the incidence variety
\[
\mathcal J
=
\{(C,p_1,\ldots,p_9)\in
Z\times(\mathbb C_{z,w}^2)^9
\mid p_i\in C,\ i=1,\ldots,9\}.
\]
For each $C\in Z$, the nine points vary independently on $C$,
so the fiber over $C$ has dimension at most nine. Therefore, by the theorem on the dimension of fibres \cite{Shafarevich}, we have $
\dim\mathcal J\leq\dim Z+9\leq17.$
Hence, the
projection of $\mathcal J$ cannot be dense in an $18$-dimensional space of nine points. Thus nine
generic points do not lie on any curve in $Z$.
\end{proof}
The birational correspondence 
\[
\textrm{Gr}(3,6) \mathrel{\dashleftarrow \dashrightarrow} \mathcal C
\]
and \Cref{lem:general_points_avoid_bad_curves} allow us to formulate the nine-point path synthesis problem in  the Grassmannian. 

The following commutative diagram summarizes the correspondence.
The bottom row sends a mechanism to its associated net and then
to its coupler curve. The top row also records a configuration
and its resulting coupler point, while the vertical maps forget
these additional data. Generically, $\Psi$ has degree six and
$\Phi$ is birational.
\[
\begin{tikzcd}[row sep=large, column sep=large, scale=1.5, transform shape]
\mathcal E_{\mathbb M}
  \arrow[r, dashed] \arrow[d]
&
\mathcal E_{\textrm{Gr}}
  \arrow[r, dashed] \arrow[d]
&
\mathcal U_{\mathcal C}
  \arrow[d]
\\
\mathbb M
  \arrow[r, dashed, "\Psi", "{6:1}"']
  \arrow[rr, dashed, bend right=45,
    "{\varphi}"']
&
\textrm{Gr}(3,6)
  \arrow[r, dashed, "\Phi", "1:1"']
&
\mathcal C
\end{tikzcd}
\]

\section{A sparse formulation of the nine-point path synthesis problem}
\label{sec:SparseReduction}
Now that we have constructed the \textit{right} solution space for the nine-point path synthesis problem, we reformulate the interpolation conditions in this space. First, we define the problem:
\begin{center}
\textbf{Nine-point path synthesis problem:} Given nine \mydef{precision points} with isotropic coordinates \[\mydef{p_1} = (z_1,w_1),\ldots,\mydef{p_9} =(z_9,w_9),\] find all coupler curves which interpolate all of them. 
\end{center}
These nine conditions on the nine-dimensional space of coupler curves can be written down in a Vandermonde form once we obtain a formula for the coupler curve of a net. 

\subsection{The coupler equation of a net} Recall the correspondence $\Phi$ which maps a net $\Lambda \in \textrm{Gr}(3,6)$ to its corresponding coupler curve $\mathcal C_{\Lambda}$ in accordance with its configuration space $E_{\Lambda}$:
\[
E_{\Lambda} \ni ([X],[Y]) \mapsto \left( \frac{X_1}{X_0},\frac{Y_2}{Y_0}\right) \in \mathcal C_{\Lambda}
\]
It turns out that it is not particularly hard to write down the coupler equation $F_{\Lambda}(z,w)$ in terms of $\Lambda$. Since $z = \frac{X_1}{X_0}$, we replace $X_1$ with $zX_0$. Similarly, we replace $Y_2$ with $wY_0$. Then the quadratic  $X^TMY$ for a generic $3 \times 3$ symmetric matrix $M=(m_{i,j})_{i,j=0}^2$ becomes 
\begin{align}
\notag (X_0,zX_0,X_2) M \begin{pmatrix} Y_0 \\ Y_1 \\ w Y_0 \end{pmatrix}=&\underbrace{m_{1,2}}_{\mydef{b_{2,1}}}X_{2}Y_{1}+\left(\underbrace{m_{1,1}z+m_{0,1}}_{\mydef{b_{0,1}}}\right)X_{0}Y_{1}+\left(\underbrace{m_{2,2}w+m_{0,2}}_{\mydef{b_{2,0}}}\right)X_{2}Y_{0} \\
\notag &+\left(\underbrace{m_{1,2}z\,w+m_{0,1}z+m_{0,2}w+m_{0,0}}_{\mydef{b_{0,0}}}\right)X_{0}Y_{0} \\
&= \begin{pmatrix} b_{2,1} & b_{0,1} & b_{2,0} & b_{0,0} \end{pmatrix} \begin{pmatrix} X_2Y_1 \\ X_0Y_1 \\ X_2Y_0 \\ X_0Y_0 \end{pmatrix}
\end{align}
Now, given a net of conics  $
\Lambda = \textrm{span}\left(X^TM_1X,X^TM_2X,X^TM_3X\right)
$, we vectorize $M_1,M_2$ and $M_3$ via an ordered basis of monomials and express $\Lambda$ as the row space of the matrix \[
\mydef{\textbf{M}_{\Lambda}}=
\begin{pNiceMatrix}[first-row]
X_0^2 & 2X_0X_1 & 2X_0X_2 & X_1^2 & 2X_1X_2 & X_2^2\\
\vert & \vert & \vert & \vert & \vert & \vert\\
\mydef{\mathbf c_{0,0}} & \mydef{\mathbf c_{0,1}} & \mydef{\mathbf c_{0,2}} &
\mydef{\mathbf c_{1,1}} & \mydef{\mathbf c_{1,2}} & \mydef{\mathbf c_{2,2}}\\
\vert & \vert & \vert & \vert & \vert & \vert
\end{pNiceMatrix}.
\]
We extend the notation $b_{i,j}(m_{0,0},\ldots,m_{2,2})$ to
 $\mydef{\mathbf{b_{i,j}}} = b_{i,j}
 (\mathbf{c}_{0,0},\ldots,\mathbf{c}_{2,2})$ which takes values in $\mathbb{C}^3$. The condition now for $([X],[Y]) \in E_{\Lambda}$ to parametrize $(z,w)$ is 
\[\underbrace{\begin{pmatrix} \mathbf b_{2,1} &\mathbf b_{0,1} & \mathbf b_{2,0} &\mathbf  b_{0,0} \end{pmatrix}}_{\mydef{\mathbf{b}(\mathbf{c})}} \begin{pmatrix} X_2Y_1 \\ X_0Y_1 \\ X_2Y_0 \\ X_0Y_0 \end{pmatrix} = \textbf{0}
\]
Writing $\mydef{[i,j,k]}=\textrm{det}(i,j,k)$, the signed maximal minors
\[
\mydef{k_{2,1}}=-[\mathbf b_{0,1},\mathbf b_{2,0},\mathbf b_{0,0}], \,\, 
\mydef{k_{0,1}}=[\mathbf b_{2,1},\mathbf b_{2,0},\mathbf b_{0,0}],  \,\, 
\mydef{k_{2,0}}=-[\mathbf b_{2,1},\mathbf b_{0,1},\mathbf b_{0,0}], \,\, 
\mydef{k_{0,0}}=[\mathbf b_{2,1},\mathbf b_{0,1},\mathbf b_{2,0}]
\]
 of the matrix $\mathbf{b}(\mathbf{c})$ give a kernel vector $\mydef{k}=(k_{2,1},k_{0,1},k_{2,0},k_{0,0})^T$ of it. For $k$ to be a kernel vector of the form $(X_2Y_1, X_0Y_1, X_2Y_0, X_0Y_0)$, it must satisfy the associated binomial 
 \[
 k_{2,1}k_{0,0}-k_{2,0}k_{0,1} =0 \longleftrightarrow (X_2Y_1)(X_0Y_0)=(X_2Y_0)(X_0Y_1).
 \]
 What we have just accomplished is the derivation of a formula for the coupler curve of a net. 
 \begin{theorem}
 Let $\Lambda$ be a generic net. Its coupler curve has the equation 
 \[
 F_{\Lambda}(z,w) =  k_{2,1}k_{0,0}-k_{2,0}k_{0,1}=\sum_{i,j=0}^3 c_{ij}(\Lambda)z^iw^j=0.
\]
Each coefficient $c_{i,j}$ is quadratic in the Pl\"ucker coordinates of $\textbf{M}_{\Lambda}$. In particular, 
\[
c_{00}(\Lambda) = p_{125}p_{135}-p_{123}p_{235} \quad \quad c_{33}(\Lambda) = -p_{456}^2.
\]
 \end{theorem}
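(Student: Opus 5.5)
The plan is to show, in order: the zero set of $F_\Lambda$ contains $\mathcal C_\Lambda$; $F_\Lambda$ has bidegree at most $(3,3)$ with coefficients quadratic in Pl\"ucker coordinates; the two displayed coefficients are as stated; and finally $F_\Lambda$ is, up to scalar, the defining polynomial rather than a multiple of it.

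\emph{Vanishing on the curve.} Take $([X],[Y])\in E_\Lambda$ with $X_0Y_0\neq0$, and set $z=X_1/X_0$, $w=Y_2/Y_0$. The derivation preceding the statement shows that the nonzero vector
\[
v=(X_2Y_1,X_0Y_1,X_2Y_0,X_0Y_0)^T
\]
lies in the kernel of the $3\times 4$ matrix $\mathbf b(\mathbf c)$ evaluated at $(z,w)$. Wherever $\mathbf b$ has rank three, its kernel is spanned by the vector $k$ of signed maximal minors (Cramer's rule). Hence $k$ is proportional to $v$, and $v$ satisfies the binomial $(X_2Y_1)(X_0Y_0)=(X_2Y_0)(X_0Y_1)$, so $k_{2,1}k_{0,0}-k_{2,0}k_{0,1}=0$. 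By \Cref{thm:net_to_coupler}, such points are dense in $\mathcal C_\Lambda$, so $F_\Lambda$ vanishes on $\mathcal C_\Lambda$, provided $F_\Lambda\not\equiv0$.

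\emph{Bidegree and Pl\"ucker structure.} Label the columns $\mathbf c_{0,0},\dots,\mathbf c_{2,2}$ of $\mathbf M_\Lambda$ as $1,\dots,6$. Each column of $\mathbf b$ is a linear combination of these:
\begin{itemize}
\item $\mathbf b_{2,1}=\mathbf c_{1,2}$ is constant;
\item $\mathbf b_{0,1}$ is linear in $z$;
\item $\mathbf b_{2,0}$ is linear in $w$;
\item $\mathbf b_{0,0}$ has support $\{1,z,w,zw\}$.
\end{itemize}
By multilinearity, each minor $k_{\ast}$ is a linear combination of Pl\"ucker coordinates $p_{ijk}$ with monomial coefficients in $z,w$. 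Hence $F_\Lambda$ has coefficients quadratic in Pl\"ucker coordinates. Counting degrees column by column gives the following bidegree bounds:
\begin{itemize}
\item $k_{2,1}$ has bidegree at most $(2,2)$ and $k_{0,0}$ at most $(1,1)$;
\item $k_{2,0}$ has bidegree at most $(2,1)$ and $k_{0,1}$ at most $(1,2)$.
\end{itemize}
Both products therefore have bidegree at most $(3,3)$.

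\emph{The two coefficients.} Setting $z=w=0$ reduces the columns of $\mathbf b$ to $\mathbf c_{1,2},\mathbf c_{0,1},\mathbf c_{0,2},\mathbf c_{0,0}$. Sorting the column indices of each minor gives
\[
k_{2,1}=-p_{123},\qquad k_{0,0}=p_{235},\qquad k_{2,0}=p_{125},\qquad k_{0,1}=-p_{135},
\]
so $c_{00}=p_{125}p_{135}-p_{123}p_{235}$.

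For the top coefficient, keep only the leading parts $z\mathbf c_{1,1}$, $w\mathbf c_{2,2}$, $zw\,\mathbf c_{1,2}$. Then:
\begin{itemize}
\item $k_{2,1}$ has leading term $z^2w^2p_{456}$;
\item $k_{0,0}$ has leading term $-zw\,p_{456}$;
\item the leading part of $k_{2,0}$ vanishes, since two of its columns are proportional to $\mathbf c_{1,2}$.
\end{itemize}
Since $k_{0,1}$ has bidegree at most $(1,2)$, the product $k_{2,0}k_{0,1}$ contributes nothing to $z^3w^3$. Hence $c_{33}=-p_{456}^2$.

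\emph{Main obstacle: no extraneous factor.} The delicate point is showing that $F_\Lambda$ generates the ideal of $\mathcal C_\Lambda$, so that no extra factor arises from the locus where $\mathbf b$ drops rank. First, $c_{33}=-p_{456}^2\neq0$ for a generic net, so $F_\Lambda\not\equiv 0$ and its bidegree is exactly $(3,3)$. Second, by \Cref{thm:mechanism_to_net}, $\mathcal C_\Lambda$ is the coupler curve $\mathcal C_{\mathcal M}$ of a mechanism, whose implicit equation $F_{\mathcal M}$ is irreducible of bidegree $(3,3)$. Then $F_{\mathcal M}$ divides $F_\Lambda$, and comparing bidegrees forces $F_\Lambda=\lambda F_{\mathcal M}$ with $\lambda\neq0$.

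If one prefers not to rely on the implicitization fact, the same conclusion can be verified on the Hesse net of \Cref{ex:hesse_net}. There, a direct computation of $F_\Lambda$ should return a scalar multiple of the irreducible sextic in \Cref{ex:hesse_mechanism}. Since the condition ``$F_\Lambda$ is irreducible of bidegree $(3,3)$'' is open, verifying it at this one net establishes it for a generic net.
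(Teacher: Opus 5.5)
Your proof is correct and follows the paper's route. You use the kernel-vector/binomial derivation for vanishing on $\mathcal C_\Lambda$, the column-by-column bidegree count of the four minors, and a single explicit evaluation to establish generic behaviour; for the Hesse net one indeed finds $F_\Lambda=-(z^3w^3-2z^2w^2+4zw+z^3+w^3-1)$. Where you go beyond the paper is in detail, not route:
\begin{itemize}
\item Your multilinear expansions derive $c_{00}$, $c_{33}$ and the Pl\"ucker-quadratic structure by hand, where the paper only says these ``can be verified computationally.''
\item You handle the possibility of an extraneous factor explicitly, where the paper passes over it.
\end{itemize}
Of your two arguments for that last step, keep the Hesse-net/openness one. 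The paper supports the claim that $F_{\mathcal M}$ has bidegree $(3,3)$ only by a forward reference to this very theorem, so leaning on it would be circular within the paper's logic.
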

 \begin{proof}
 We have already established that $k_{2,1}k_{0,0}-k_{2,0}k_{0,1}$ is the coupler equation. The four minors have bidegrees at most $(2,2),(2,1),(1,2),$ and $(1,1)$, respectively, so the coupler curve has bidegree at most $(3,3)$. A rational evaluation suffices to show that this is generic behavior. 
 The claims about  $c_{00}(\Lambda)$ and $c_{33}(\Lambda)$ can be verified computationally. 
 \end{proof}
 \subsection{The nine-point equations}
 Fix nine precision points in their isotropic coordinates
 \[
 p_1 = (z_1,w_1), \quad p_2 = (z_2,w_2), \quad \cdots \quad p_9=(z_9,w_9).
 \]
 The condition that $p_i \in \mathcal C_\Lambda$ translates to $F_{\Lambda}(z_i,w_i) = 0$. We fix the chart $p_{456}=1$ on the Grassmannian by setting the rightmost block of $\textbf{M}_{\Lambda}$ equal to the identity:
\[
\mydef{\textbf{M}(\textbf{t})} = 
\begin{pmatrix}
t_0&t_3&t_6&1&0&0\\
t_1&t_4&t_7&0&1&0\\
t_2&t_5&t_8&0&0&1
\end{pmatrix}.
\]
Since $c_{33}(\Lambda) = -p_{456}^2$, this fixes a coordinate chart $c_{33} = -1$ on the projective space of coupler curves. The matrix $\textbf{b}$ in the variables $\textbf{t}$ is 
\[
\textbf{b}(\textbf{t})=\left(\!\begin{array}{cccc}
      0&z+t_{3}&t_{6}&t_{3}z+t_{6}w+t_{0}\\
      1&t_{4}&t_{7}&z\,w+t_{4}z+t_{7}w+t_{1}\\
      0&t_{5}&w+t_{8}&t_{5}z+t_{8}w+t_{2}
      \end{array}\!\right)
\]
Its kernel  is spanned by 
$k(\textbf{t})=(k_{21},k_{01},k_{20},k_{00})^T$ where 
 
 {\footnotesize{
\begin{align*}
k_{21} &= -z\,w-t_{8}z-t_{3}w+t_{5}t_{6}-t_{3}t_{8}\\
k_{01} &=t_{5}z^{2}+t_{8}z\,w+t_{2}z+\left(-t_{5}t_{6}+t_{3}t_{8}\right)w+t_{2}t_{3}-t_{0}t_{5}\\
k_{20} &=t_{3}z\,w+t_{6}w^{2}+\left(-t_{5}t_{6}+t_{3}t_{8}\right)z+t_{0}w-t_{2}t_{6}+t_{0}t_{8}\\
k_{00} &=z^{2}w^{2}+\left(t_{4}+t_{8}\right)z^{2}w+\left(t_{3}+t_{7}\right)z\,w^{2}+\left(-t_{5}t_{7}+t_{4}t_{8}\right)z^{2}+\left(-t_{5}t_{6}+t_{3}t_{8}+t_{1}\right)z\,w+\left(-t_{4}t_{6}+t_{3}t_{7}\right)w^{2}\\
&+\left(-t_{2}t_{7}+t_{1}t_{8}\right)z+\left(t_{1}t_{3}-t_{0}t_{4}\right)w+t_{2}t_{4}t_{6}-t_{1}t_{5}t_{6}-t_{2}t_{3}t_{7}+t_{0}t_{5}t_{7}+t_{1}t_{3}t_{8}-t_{0}t _{4}t_{8}
\end{align*}
}}
\noindent Taking $k_{21}k_{00}-k_{01}k_{20}= F_{\mathbf t}(z,w)$ then gives a formulation of the \mydef{nine-point path synthesis equations}:
\[
\mathcal F^{(0)}(\textbf{t};\textbf{z},\textbf{w}) = \begin{pmatrix}
F_{\mathbf{t}}(z_1,w_1) \\ 
F_{\mathbf{t}}(z_2,w_2) \\ 
\vdots \\ 
F_{\mathbf{t}}(z_9,w_9) \\ 
\end{pmatrix} = \textbf{0}
\]
Each polynomial in $t_0,\ldots,t_8$ has the same support, with
Newton polytope $\Delta^{(0)}$ of normalized volume
\[
9!\operatorname{vol}(\Delta^{(0)})
=
9!\frac{4667}{120960}
=
14001.
\]
Since all of the Newton polytopes are the same, this is a \textit{Kouchnirenko bound} \cite{Kouchnirenko1976}.
This volume computation and all subsequent mixed volume computations were first computed with PHCpack \cite{Verschelde1999} and subsequently with \cite{MixedSubdivisions2019}. 
Consequently, the mixed volume of the original system is $14001$.
The vertices of $\Delta^{(0)}$, grouped according to total degree
$0,1,2,3,4,5$, are given below.
{\tiny
\begin{align*}
&V_0 = \left(\begin{array}{c}
0\\
0\\
0\\
0\\
0\\
0\\
0\\
0\\
0
\end{array}\right),\qquad
V_1 = \left(\begin{array}{ccc}
0&0&0\\
0&0&1\\
0&0&0\\
0&0&0\\
0&1&0\\
0&0&0\\
0&0&0\\
1&0&0\\
0&0&0
\end{array}\right), V_2 = 
\left(\begin{array}{ccccccccccccccc}
0&0&0&0&0&0&0&0&0&0&0&1&1&1&1\\
0&0&0&0&0&0&0&0&0&0&0&0&0&0&0\\
0&0&0&0&0&0&0&0&1&1&1&0&0&0&1\\
0&0&0&0&0&1&1&2&0&0&1&0&0&0&0\\
0&0&0&0&1&0&1&0&0&0&0&0&0&1&0\\
0&0&0&1&0&1&0&0&0&0&0&0&1&0&0\\
0&0&1&0&1&0&0&0&0&1&0&0&0&0&0\\
0&1&0&1&0&0&0&0&1&0&0&0&0&0&0\\
2&1&1&0&0&0&0&0&0&0&0&1&0&0&0
\end{array}\right),\end{align*}
\begin{align*}
&V_3 = \left(\begin{array}{cccccccccccccccccccccccccccccc}
0&0&0&0&0&0&0&0&0&0&0&0&0&0&0&0&0&0&0&0&0&1&1&1&1&1&1&1&1&2\\
0&0&0&0&0&0&0&0&0&0&0&0&0&0&0&0&0&0&0&1&1&0&0&0&0&0&0&0&0&0\\
0&0&0&0&0&0&0&0&0&0&0&0&1&1&1&1&1&1&2&0&0&0&0&0&0&0&0&1&1&0\\
0&0&0&0&0&0&0&1&1&1&1&2&0&0&0&1&1&2&0&0&2&0&0&0&1&1&1&0&1&0\\
0&0&0&0&1&1&1&0&0&0&1&0&0&0&1&0&0&0&0&0&0&0&0&0&0&0&1&0&0&0\\
1&1&1&2&0&0&1&0&1&1&0&0&0&0&0&0&0&0&0&0&0&0&1&1&0&1&0&0&0&1\\
0&1&2&1&0&1&1&1&0&0&1&0&0&1&1&0&1&0&1&0&0&0&0&0&0&0&0&0&0&0\\
1&1&0&0&0&0&0&0&0&1&0&1&1&0&0&0&0&0&0&0&0&0&0&1&0&0&0&0&0&0\\
1&0&0&0&2&1&0&1&1&0&0&0&1&1&0&1&0&0&0&2&0&2&1&0&1&0&0&1&0&0
\end{array}\right),\\[4pt]
&
V_4 = \left(\begin{array}{cccccccccccccccccccccccccccccc}
0&0&0&0&0&0&0&0&0&0&0&0&0&0&0&0&0&0&0&1&1&1&1&1&1&1&1&1&1&2\\
0&0&0&0&0&0&0&0&0&0&0&0&0&0&0&0&0&0&0&0&0&0&0&0&0&0&0&0&0&0\\
0&0&0&0&0&0&0&0&0&0&1&1&1&1&1&1&1&1&2&0&0&0&0&0&0&0&0&1&1&0\\
0&0&0&0&1&1&1&1&2&2&0&0&0&1&1&1&2&2&1&0&0&0&0&0&1&1&1&0&1&0\\
0&0&1&1&0&0&1&1&0&0&0&0&1&0&0&1&0&0&0&0&0&0&1&1&0&0&0&0&0&0\\
2&2&1&1&1&1&0&0&0&0&1&1&0&0&1&0&0&0&0&1&1&2&0&1&0&1&1&1&0&1\\
1&2&1&2&0&1&0&1&0&0&1&2&1&1&1&1&0&0&1&0&1&1&0&1&0&0&0&1&0&0\\
1&0&0&0&1&1&0&0&0&1&1&0&0&0&0&0&0&1&0&1&0&0&0&0&0&0&1&0&0&0\\
0&0&1&0&1&0&2&1&2&1&0&0&1&1&0&0&1&0&0&1&1&0&2&0&2&1&0&0&1&1
\end{array}\right),\\[4pt]
&
V_5 = \left(\begin{array}{cccccccccc}
0&0&0&0&0&0&1&1&1&1\\
0&0&0&0&1&1&0&0&0&0\\
1&1&1&1&0&0&0&0&0&0\\
0&1&1&2&0&2&0&0&1&1\\
1&0&1&0&0&0&0&1&0&1\\
1&1&0&0&2&0&2&1&1&0\\
2&1&1&0&2&0&1&1&0&0\\
0&1&0&1&0&0&1&0&1&0\\
0&0&1&1&0&2&0&1&1&2
\end{array}\right).
\end{align*}
}
\noindent The degree-five homogeneous part of every point condition is the
same polynomial:
\[
\left(t_5t_6-t_3t_8\right)
\left(
t_2t_4t_6-t_1t_5t_6-t_2t_3t_7
+t_0t_5t_7+t_1t_3t_8-t_0t_4t_8
\right)=-p_{123}p_{235}
\]
In particular, this part is independent of the precision point
$(z_i,w_i)$. Subtracting the first point condition from each of
the remaining eight therefore eliminates every monomial of total
degree five from those equations. We obtain the equivalent system
\[
\mydef{\mathcal F^{(1)}(\textbf{t};\textbf{z},\textbf{w})}
=
\begin{pmatrix}
F_{\mathbf t}(z_1,w_1)\\
F_{\mathbf t}(z_2,w_2)-F_{\mathbf t}(z_1,w_1)\\
\vdots\\
F_{\mathbf t}(z_9,w_9)-F_{\mathbf t}(z_1,w_1)
\end{pmatrix}
=\mathbf 0.
\]
The first equation still has degree five, while the remaining
eight have degree four. This elementary change of
generators lowers the mixed volume  $
14001 \to 12597.$ This is one linear change of generators in the polynomial ring $\mathbb{C}(z_1,w_1,\ldots,z_9,w_9)[t_0,\ldots,t_8]$, but there are much better ones informed by basic interpolation. 
\subsection{Vandermonde reduction}
Consider the $9\times16$ multivariate Vandermonde matrix, in this particular order
{\footnotesize
\[
\mydef{\mathbb V}
=
\begin{pNiceMatrix}[first-row]
c_{00}&c_{01}&c_{02}&c_{03}&
c_{10}&c_{11}&c_{12}&c_{13}&
c_{20}&c_{21}&c_{22}&c_{23}&
c_{30}&c_{31}&c_{32}&c_{33}\\ \hline
1&w_1&w_1^2&w_1^3&
z_1&z_1w_1&z_1w_1^2&z_1w_1^3&
z_1^2&z_1^2w_1&z_1^2w_1^2&z_1^2w_1^3&
z_1^3&z_1^3w_1&z_1^3w_1^2&z_1^3w_1^3\\
1&w_2&w_2^2&w_2^3&
z_2&z_2w_2&z_2w_2^2&z_2w_2^3&
z_2^2&z_2^2w_2&z_2^2w_2^2&z_2^2w_2^3&
z_2^3&z_2^3w_2&z_2^3w_2^2&z_2^3w_2^3\\
\vdots&\vdots&\vdots&\vdots&
\vdots&\vdots&\vdots&\vdots&
\vdots&\vdots&\vdots&\vdots&
\vdots&\vdots&\vdots&\vdots\\
1&w_9&w_9^2&w_9^3&
z_9&z_9w_9&z_9w_9^2&z_9w_9^3&
z_9^2&z_9^2w_9&z_9^2w_9^2&z_9^2w_9^3&
z_9^3&z_9^3w_9&z_9^3w_9^2&z_9^3w_9^3
\end{pNiceMatrix}.
\]
}
The nine-point system is simply $
\mathcal F=\mathbb V\mathbf c(\mathbf t)=\mathbf 0$, where 
$\mydef{
\textbf{c}(\textbf{t})} = (c_{00}(\textbf{t}),\ldots,c_{33}(\textbf{t}))^T
$
are the coefficients of the defining equation of the coupler curve associated to $\textbf{M}(\textbf{t})$.
We select the nine monomials \[\mydef{\mathcal P} = 
\{1,w,z,w^2,zw,z^2,w^3,zw^2,z^3\}\]
and let $\mydef{\mathbb V_{\mathcal P}}$ be the corresponding
$9\times9$ submatrix of $\mathbb V$. Similarly $\textbf{c}_{\mathcal P}(\textbf{t})$ is the vector selecting those terms from $\textbf{c}(\textbf{t})$.  Explicitly,
\[
\mathbb V_{\mathcal P}
=
\begin{pmatrix}
1&w_1&z_1&w_1^2&z_1w_1&z_1^2&w_1^3&z_1w_1^2&z_1^3\\
1&w_2&z_2&w_2^2&z_2w_2&z_2^2&w_2^3&z_2w_2^2&z_2^3\\
\vdots&\vdots&\vdots&\vdots&\vdots&\vdots&\vdots&\vdots&\vdots\\
1&w_9&z_9&w_9^2&z_9w_9&z_9^2&w_9^3&z_9w_9^2&z_9^3
\end{pmatrix}.
\]
Since
\[
\mathbb V\mathbf c
=
\mathbb V_{\mathcal P}\mathbf c_{\mathcal P}
+
\mathbb V_{\mathcal P^c}\mathbf c_{\mathcal P^c},
\]
multiplying the system by $\mathbb V_{\mathcal P}^{-1}$ gives the
equivalent system
\begin{align*}
\mydef{\mathcal F^{(2)}}
=
\mathbb V_{\mathcal P}^{-1}\mathcal F =\mathbb{V}_{\mathcal P}^{-1}\mathbb{V}_{\mathcal P} \textbf{c}_{\mathcal P}(\textbf{t}) + \mathbb{V}_{\mathcal P}^{-1}\mathbb{V}_{\mathcal P^c} \textbf{c}_{\mathcal P^c}(\textbf{t}) &=
\mathbf c_{\mathcal P}(\mathbf t)
+
\mathbb V_{\mathcal P}^{-1}
\mathbb V_{\mathcal P^c}
\mathbf c_{\mathcal P^c}(\mathbf t)\\
&= {\footnotesize{\begin{pmatrix}
c_{00}(\mathbf t)
+\displaystyle\sum_{(i,j)\notin\mathcal P}
\lambda_{1,ij}c_{ij}(\mathbf t)
\\[3pt]
c_{01}(\mathbf t)
+\displaystyle\sum_{(i,j)\notin\mathcal P}
\lambda_{2,ij}c_{ij}(\mathbf t)
\\[3pt]
c_{10}(\mathbf t)
+\displaystyle\sum_{(i,j)\notin\mathcal P}
\lambda_{3,ij}c_{ij}(\mathbf t)
\\
\vdots
\\
c_{30}(\mathbf t)
+\displaystyle\sum_{(i,j)\notin\mathcal P}
\lambda_{9,ij}c_{ij}(\mathbf t)
\end{pmatrix}}}
=
\mathbf 0.
\end{align*}
Here the $\lambda_{k,ij}$ are rational functions in the precision point coordinates. 

Thus the $j$th transformed equation contains its chosen pivot
coefficient with coefficient $1$ and none of the other eight pivot
coefficients. This is Gaussian elimination on nine columns of the
point-evaluation matrix.
The resulting equations have total degrees
$(
5,\ 4,\ 4,\ 4,\ 4,\ 4,\ 3,\ 3,\ 3,
)$ 
and the mixed volume of their Newton polytopes $\mydef{\Delta_{\bullet}^{(2)}} = (\Delta_1^{(2)},\ldots,\Delta_9^{(2)})$ is
\[
\textrm{MV}(\Delta_{\bullet}^{(2)}) = 6064.
\]
We tried each of the ${{16}\choose{9}}=11440$ pivot options $\mathcal P$. This gave the best reduction in mixed volume. 
\subsection{A birational change of variables}

The final reduction of the mixed volume to $5538$
comes from a birational change of coordinates on our affine chart
of $\textrm{Gr}(3,6)$.
Recall that the affine chart is represented by
\[
\mathbf M(\mathbf t)
=
\begin{pmatrix}
t_0&t_3&t_6&1&0&0\\
t_1&t_4&t_7&0&1&0\\
t_2&t_5&t_8&0&0&1
\end{pmatrix}.
\]
Simplifying notation, write  $\mydef{\mathbf c_0},\mydef{\mathbf c_1},\mydef{\mathbf c_2}$ for its first three
columns. On the open set $t_5\ne0$, introduce coordinates
$\mydef{\mathbf s}=(\mydef{s_1},\ldots,\mydef{s_9})$ by requiring
\[
\mathbf c_1
=
\frac{1}{s_1}
\begin{pmatrix}
s_3\\s_4\\1
\end{pmatrix},
\qquad
\mathbf c_0
=
s_2\mathbf c_1+
\begin{pmatrix}
s_8\\s_9\\0
\end{pmatrix},
\qquad
\mathbf c_2
=
s_1s_7\mathbf c_1-
\begin{pmatrix}
s_5\\s_6\\0
\end{pmatrix}.
\]
Note that the first assignment is to $\textbf{c}_1$ and not $\textbf{c}_0$. Equivalently,
{\footnotesize{\[
s_1=\frac{1}{t_5},\quad s_2=\frac{t_2}{t_5},\quad s_3=\frac{t_3}{t_5},\quad s_4=\frac{t_4}{t_5},\quad s_5=\frac{t_3t_8}{t_5}-t_6,\quad s_6=\frac{t_4t_8}{t_5}-t_7,\quad s_7=t_8,\quad s_8=t_0-\frac{t_2t_3}{t_5},\quad s_9=t_1-\frac{t_2t_4}{t_5}.
\]}}
\noindent The inverse change of coordinates is
\[
\begin{aligned}
t_0&=\frac{s_2s_3}{s_1}+s_8,
&
t_1&=\frac{s_2s_4}{s_1}+s_9,
&
t_2&=\frac{s_2}{s_1},
\\
t_3&=\frac{s_3}{s_1},
&
t_4&=\frac{s_4}{s_1},
&
t_5&=\frac{1}{s_1},
\\
t_6&=s_3s_7-s_5,
&
t_7&=s_4s_7-s_6,
&
t_8&=s_7.
\end{aligned}
\]
Thus the change is an isomorphism between the open subsets
$t_5\ne0$ and $s_1\ne0$.
Substituting $\mathbf t=\mathbf t(\mathbf s)$ into the coupler
polynomial introduces powers of $s_1$ in the denominator, which we clear:
\[
\mydef{G_{\mathbf s}(z,w)}
=
s_1^2F_{\mathbf t(\mathbf s)}(z,w)
=
\sum_{i=0}^3\sum_{j=0}^3
\mydef{g_{ij}(\mathbf s)}z^iw^j.
\]
Applying the same Vandermonde reduction as before, using the same pivots, we obtain the equations 
\begin{align*}
\mydef{\mathcal F^{(3)}(\textbf{s};\textbf{z},\textbf{w})}
&=
\textbf{g}_{\mathcal P}(\mathbf s)
+
\mathbb V_{\mathcal P}^{-1}
\mathbb V_{\mathcal P^c}
\textbf{g}_{\mathcal P^c}(\mathbf s)\\
&= {\footnotesize{\begin{pmatrix}
g_{00}(\mathbf s)
+\displaystyle\sum_{(i,j)\notin\mathcal P}
\lambda_{1,ij}g_{ij}(\mathbf s)
\\[3pt]
g_{01}(\mathbf s)
+\displaystyle\sum_{(i,j)\notin\mathcal P}
\lambda_{2,ij}g_{ij}(\mathbf s)
\\[3pt]
g_{10}(\mathbf s)
+\displaystyle\sum_{(i,j)\notin\mathcal P}
\lambda_{3,ij}g_{ij}(\mathbf s)
\\
\vdots
\\
g_{30}(\mathbf s)
+\displaystyle\sum_{(i,j)\notin\mathcal P}
\lambda_{9,ij}g_{ij}(\mathbf s)
\end{pmatrix}}}
=
\mathbf 0.
\end{align*} Here the $\mydef{\lambda_{k,ij}(\textbf{z},\textbf{w})}$ are rational functions of the precision point coordinates. Their Newton polytopes have mixed volume
\[
\textrm{MV}(\Delta_1^{(3)},\ldots,\Delta_9^{(3)}) = {5538}=1442+2^{12}.
\]
Note that there are $63$ newly introduced $\lambda_{k,{ij}}$ but only $18$ precision coordinates. We call the system with the $\boldsymbol \lambda$ generic parameters $\mydef{\mathcal F^{(4)}(\textbf{s}; \boldsymbol \lambda)} = \textbf{0}$. One may expect  ${\mathcal F^{(4)}(\textbf{s}; \boldsymbol \lambda)} = \textbf{0}$ to have a larger generic solution count than $\mathcal F^{(3)}(\textbf{s};\textbf{z},\textbf{w})$, as it is more general than the case where they are restricted. The following lemma justifies this bound, which we will see later is tight anyway. 
\begin{lemma}
\label{lem:lower_than_lambda}
The generic number of regular solutions to $\mathcal F^{(4)}(\textbf{s};\boldsymbol \lambda)=\textbf{0}$ is at least the generic number of regular solutions to the nine-point path synthesis problem. 
\end{lemma}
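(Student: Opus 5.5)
The approach is a standard parameter-continuation, or lower semicontinuity, argument for regular solutions of a square family. We view $\mathcal F^{(4)}(\mathbf s;\boldsymbol\lambda)$ as a family of square systems in $\mathbf s$, defined on the torus $(\mathbb C^*)^9$ (or on the open set $s_1\neq 0$ where the change of variables is valid), parametrized by $\boldsymbol\lambda\in\mathbb C^{63}$. The nine-point system $\mathcal F^{(3)}(\mathbf s;\mathbf z,\mathbf w)$ is the member of this family at $\boldsymbol\lambda=\boldsymbol\lambda(\mathbf z,\mathbf w)$, since the $\lambda_{k,ij}$ are rational functions of the precision points. The key observation is that regular (nonsingular) isolated solutions persist under small perturbations of parameters, by the implicit function theorem. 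It also helps that the number of regular solutions is constant on a nonempty Zariski open subset of any irreducible parameter space, and that this number is maximal there.

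The first step is to relate the regular solutions of the nine-point problem, counted as coupler curves (equivalently nets, by \Cref{thm:coupler_to_net} and \Cref{lem:general_points_avoid_bad_curves}), to regular solutions of $\mathcal F^{(3)}$ in the $\mathbf s$-chart. For generic precision points, every solution lies in the open chart $p_{456}\neq 0$, $t_5\neq 0$, with all $s_i\neq 0$. This follows by \Cref{lem:general_points_avoid_bad_curves} applied to the complement of this locus in $\mathcal C$. The transformations $\mathcal F^{(0)}\to\mathcal F^{(2)}$ and $\mathbf t\to\mathbf s$ are respectively multiplication by the invertible matrix $\mathbb V_{\mathcal P}^{-1}$ and a biregular change of coordinates on $t_5\neq0$, followed by multiplication by the unit $s_1^2$. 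These preserve both the solution sets and the nonsingularity of Jacobians. So if the nine-point problem has $N$ regular solutions for generic $(\mathbf z,\mathbf w)$, then $\mathcal F^{(3)}$ has $N$ regular solutions in $(\mathbb C^*)^9$.

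The second step is the persistence argument. Fix generic $(\mathbf z,\mathbf w)$ and set $\boldsymbol\lambda_0=\boldsymbol\lambda(\mathbf z,\mathbf w)$. Each of the $N$ regular solutions $\mathbf s^{(k)}$ has an invertible Jacobian $\partial_{\mathbf s}\mathcal F^{(4)}(\mathbf s^{(k)};\boldsymbol\lambda_0)$. By the holomorphic implicit function theorem, each therefore extends to a holomorphic branch $\mathbf s^{(k)}(\boldsymbol\lambda)$ on a Euclidean neighborhood $U$ of $\boldsymbol\lambda_0$, and these branches stay regular, pairwise distinct, and inside the torus after shrinking $U$. Hence every $\boldsymbol\lambda\in U$ has at least $N$ regular solutions.

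The final step is to pass from the neighborhood $U$ to generic $\boldsymbol\lambda$. The set of $\boldsymbol\lambda$ with at least $N$ regular torus solutions is constructible, since it is the image of a projection of the incidence variety of $N$-tuples of distinct regular solutions. It contains the nonempty Euclidean open set $U$, so it is Zariski dense, and being constructible it therefore contains a nonempty Zariski open subset of $\mathbb C^{63}$. Hence the generic regular count of $\mathcal F^{(4)}$ is at least $N$.

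I expect the main obstacle to be the first step: checking carefully that genericity of the precision points transfers. Concretely, the nine-point solutions must avoid the bad loci ($t_5=0$, $s_i=0$, the complement of the birational locus), and regularity must be defined compatibly across the coordinate changes.
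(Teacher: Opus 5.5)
Your proposal is correct and takes the same route as the paper. The paper notes that $\mathcal F^{(3)}$ is the specialization of $\mathcal F^{(4)}$ at $\boldsymbol\lambda=\mathbb V_{\mathcal P}^{-1}\mathbb V_{\mathcal P^c}$ and invokes the parameter continuation theorem; you prove the half of that theorem you need (persistence of regular solutions via the implicit function theorem, then constructibility). Your first step, transferring regularity from coupler curves to torus solutions of $\mathcal F^{(3)}$ in the $\mathbf s$-chart, spells out an identification the paper treats as holding by construction.
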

\begin{proof}
By construction, the system $\mathcal F^{(3)}(\textbf{s};\textbf{z},\textbf{w})$ is a specialization of $\mathcal  F^{(4)}(\textbf{s};\boldsymbol \lambda)$. Since both systems are square, by the parameter continuation theorem \cite{BorovikBreiding2025}, the former has a generic regular solution count of at most the latter. 
\end{proof}

\begin{remark}[Alternative pivots]
The set $
\mydef{\mathcal P'}
=
\{
c_{00},c_{01},c_{02},c_{03},
c_{10},c_{11},c_{12},c_{13},c_{20}
\},$ gives a pivot choice which
produces a system whose Newton polytopes have mixed volume  $ {5329}=73^2 = 1442+3887$.
We use $\mathcal P$ for the boundary analysis instead of $\mathcal P'$. 
\end{remark}We conclude this section by recording the coefficient polynomials
$g_{ij}(\mathbf s)$ using the pivots $\mathcal P$ whose  corresponding $g_{ij}$ are underlined:
{\scriptsize
\begin{align*}
\underline{g_{00}}={}&s_1s_7s_8^2+s_2s_5s_8+s_5s_6s_8-s_5^2s_9,\\
\underline{g_{01}}={}&-s_1s_5s_7s_8-s_2s_5^2+s_2s_3s_8+s_4s_5s_8+s_3s_6s_8+s_1s_8^2-2s_3s_5s_9,\\
\underline{g_{10}}={}&-s_1s_2s_7s_8+s_1s_6s_7s_8-2s_1s_5s_7s_9-s_2^2s_5-s_2s_5s_6+s_5s_8,\\
\underline{g_{02}}={}&s_1s_3s_7s_8-s_2s_3s_5-s_4s_5^2+s_3s_5s_6+s_3s_4s_8-2s_1s_5s_8-s_3^2s_9,\\
\underline{g_{11}}={}&-s_1^2s_7^2s_8-s_1s_2s_5s_7+s_1s_4s_7s_8-2s_1s_3s_7s_9-s_2^2s_3-s_2s_4s_5-s_2s_3s_6-s_1s_2s_8+s_1s_6s_8-2s_1s_5s_9-2s_5^2+s_3s_8,\\
\underline{g_{20}}={}&-s_1^2s_7^2s_9-s_1s_2s_6s_7-s_1s_7s_8-2s_2s_5-s_5s_6,\\
\underline{g_{03}}={}&-s_1s_3s_5s_7-s_3s_4s_5+s_1s_5^2+s_3^2s_6,\\
\underline{g_{12}}={}&-2s_1s_2s_3s_7-2s_1s_4s_5s_7+s_1s_3s_6s_7-s_1^2s_7s_8-s_2s_3s_4+s_1s_2s_5+s_1s_5s_6+s_1s_4s_8-2s_1s_3s_9-3s_3s_5,\\
g_{21}={}&-s_1s_2s_4s_7-2s_1^2s_7s_9-s_1s_2s_6-3s_1s_5s_7-2s_2s_3-s_4s_5-s_3s_6-s_1s_8,\\
\underline{g_{30}}={}&-s_1s_6s_7-s_5,\\
g_{13}={}&-s_1^2s_3s_7^2-s_1s_3s_4s_7+s_1^2s_5s_7-s_1s_4s_5+2s_1s_3s_6-s_3^2,\\
g_{22}={}&-s_1^2s_4s_7^2+s_1^2s_6s_7-s_1s_2s_4-4s_1s_3s_7-s_1^2s_9-s_3s_4-s_1s_5,\\
g_{31}={}&-s_1^2s_7^2-s_1s_4s_7-s_1s_6-s_3,\\
g_{23}={}&-s_1^2s_4s_7+s_1^2s_6-2s_1s_3,\\
g_{32}={}&-2s_1^2s_7-s_1s_4,\\
g_{33}={}&-s_1^2.
\end{align*}
}
\section{Tropical BooKKeeping}
\label{sec:TropicalBookKeeping}
We describe our method of \textit{Tropical BooKKeeping}\footnote{Capitalization is intentional} for obtaining an upper bound on the number sought by Alt's problem. Let
$
\mydef{\mathbb T}=(\mathbb C^\times)^n
$
be the \mydef{$n$-dimensional algebraic torus}, and consider a
square Laurent polynomial system
\[
\mathcal F=(f_1,\ldots,f_n)
\]
in $\mathbb{C}[x_1,\ldots,x_n]=\mydef{\mathbb{C}[\textbf{x}]}$ with Newton polytopes
$
(\mydef{\Delta_1},\ldots,\mydef{\Delta_n}) 
$
in $\mathbb{R}^n$. Write
\[
\mydef{\Delta}
=
\Delta_1+\cdots+\Delta_n
\]
for their Minkowski sum.

\subsection{The BKK machinery}
The Bernstein-Kouchnirenko-Khovanskii theorem (see \Cref{prop:BKK}) counts the number of solutions to a generic square sparse polynomial system, that is, a generic system with a prescribed support. The count is the mixed volume of the convex hulls of those supports, which are the Newton polytopes of the system. Special sparse systems can have fewer isolated solutions in the torus, counted with multiplicity, but never more. Geometrically, this occurs when the branches of the generic system leave the non-compact solution space $\mathbb{T}$. A compactification of $\mathbb{T}$, however, explains the loci where those solutions can go, and how those loci relate to the combinatorics of the Newton polytopes. Specifically, solutions which go toward the  boundary loci correspond to solutions to \textit{facial systems} of the original system. 

Fix a sparse polynomial system $\mathcal F=(f_1,\ldots,f_n)$ supported on $\mydef{\mathcal A_{\bullet}}=(\mathcal A_1,\ldots,\mathcal A_n)$ with Newton polytopes $\mydef{\Delta_{\bullet}} = (\Delta_1,\ldots,\Delta_n)$. Write 
\[
\mydef{f_i}=\sum_{a\in \mathcal  A_i}c_{i,a}\textbf{x}^a,
\]
and consider a weight vector $\mydef{\omega}\in\mathbb R^n$. The weight $\omega$  exposes a face of each Newton polytope:
\[
\mydef{\Delta_i^\omega}
=
\left\{
a\in\Delta_i
\mathrel{\big|}
\langle\omega,a\rangle
=
\min_{b\in\Delta_i}\langle\omega,b\rangle
\right\}
\]
and therefore identifies the part of the polynomial $f_i$ involving monomials on that face:
\[
\mydef{f_i^\omega}
=
\sum_{\substack{a\in \mathcal A_i \cap \Delta_i^\omega}}
c_{i,a}\mathbf{x}^a.
\]
We call $f_i^\omega$ the \mydef{initial form} of $f_i$ in the direction $\omega$. 
The \mydef{facial system} in the direction of $\omega$ is
\[
\mydef{\mathcal F^\omega}
=
\left(
f_1^\omega,\ldots,f_n^\omega
\right).
\]
Positive multiples of $\omega$ determine the same faces
$\Delta_i^\omega$ and hence the same facial system.

\begin{proposition}[Full BKK Theorem \cite{Bernstein1975}]
\label{prop:BKK}
Let $\mathcal F=(f_1,\ldots,f_n)$ be a square Laurent polynomial system with Newton polytopes
$\Delta_{\bullet}=(\Delta_1,\ldots,\Delta_n)$ of positive mixed volume. The number of isolated solutions in $\mathbb{T}$, counted with multiplicity, is 
\[
\textrm{MV}(\Delta_1,\ldots,\Delta_n)
\]
if and only if there is no nonzero vector $\omega \in \mathbb{R}^n$ such that the facial system $\mathcal F^{\omega}$ has a solution in $(\mathbb{C}^\times)^n$. For a generic system with Newton polytopes $\Delta_{\bullet}$, all torus solutions are simple. 
\end{proposition}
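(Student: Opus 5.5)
The plan follows Bernstein's original argument: a toric degeneration to the generic case, combined with Puiseux-series (tropical) analysis of solution branches.

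\emph{Step 1 (generic count).} For generic coefficients, the mixed-volume formula for the number of torus solutions follows from Bernstein's first theorem, proved by polyhedral homotopy. Lift each support $\mathcal A_i$ generically by heights $h_i$ and form $f_i(\mathbf x,t)=\sum_a c_{i,a}t^{h_i(a)}\mathbf x^a$. The Puiseux-series solutions as $t\to 0$ are indexed by the cells of the induced regular fine mixed subdivision of $\Delta$. Each mixed cell contributes a binomial system with exactly its normalized volume many torus solutions, all simple. Summing over mixed cells gives $\textrm{MV}(\Delta_\bullet)$ simple solutions for generic coefficients. This also yields the final claim of the proposition.

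\emph{Step 2 (specialization).} Join a generic system $\mathcal G$ to the given system $\mathcal F$ by the path $\mathcal H_\varepsilon=(1-\varepsilon)\mathcal F+\varepsilon\mathcal G$, so that $\mathcal H_1=\mathcal G$ and $\mathcal H_0=\mathcal F$. For all but finitely many $\varepsilon$ the system $\mathcal H_\varepsilon$ has $\textrm{MV}$ simple torus solutions. These form an algebraic curve over the $\varepsilon$-line, finite of degree $\textrm{MV}$ over a punctured neighbourhood of $0$. By upper semicontinuity of intersection multiplicity (conservation of number), each isolated torus solution of $\mathcal F$ of multiplicity $\mu$ is the limit of exactly $\mu$ branches. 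Every other branch either tends to a positive-dimensional component of $V(\mathcal F)\cap\mathbb T$ or leaves $\mathbb T$.

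\emph{Step 3 (escaping branches give facial solutions).} Expand an escaping branch as a Puiseux series $x_j(\varepsilon)=\gamma_j\varepsilon^{\omega_j}+\text{h.o.t.}$ with $\gamma\in(\mathbb C^\times)^n$ and $\omega\ne 0$. Substitute into $\mathcal H_\varepsilon$ and extract the lowest-order term in $\varepsilon$. Because the coefficients of $\mathcal H_\varepsilon$ tend to those of $\mathcal F$ and are nonzero on the supports, the lowest-order term of each equation is $f_i^\omega(\gamma)$. Hence $\gamma$ is a torus solution of $\mathcal F^\omega$.

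Branches converging to a positive-dimensional component need separate treatment. Such a component is non-compact in $\mathbb T$, so its closure in the toric variety of $\Delta$ meets the boundary. Choosing a curve in the component that tends to a boundary point again produces a facial solution for some $\omega\ne0$.

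This gives the "if" direction: if no facial system $\mathcal F^\omega$ with $\omega\neq 0$ has a torus solution, then no branch escapes and no positive-dimensional component exists. All $\textrm{MV}$ branches therefore land on isolated solutions, and the count with multiplicity is exactly $\textrm{MV}$.

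\emph{Step 4 (converse).} Suppose $\mathcal F^\omega(\gamma)=0$ with $\gamma\in\mathbb T$ and $\omega\ne0$. Perturb $\mathcal G$ so that the perturbed system has a branch with initial term $\gamma\varepsilon^{\omega}$. This can be done by choosing the coefficients of $\mathcal G$ off the $\omega$-faces arbitrarily and solving the next-order equations, which is Bernstein's lifting argument. That branch escapes to the toric boundary, so strictly fewer than $\textrm{MV}$ branches stay in $\mathbb T$.

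\emph{Main obstacle.} I expect this converse construction to be the hardest step. Its difficulty is ensuring that a nondegenerate branch actually realizes the prescribed initial data when the facial solution is non-isolated or degenerate. I would first reduce to $\omega$ rational by perturbing within the normal cone. Then I would make a monomial change of coordinates so that $\omega=e_n$, which turns the question into a one-variable Hensel/Puiseux lifting problem.
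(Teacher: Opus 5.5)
Your Steps 1--3 are sound. Step~3 is the same Puiseux substitution the paper makes after setting up the sparse homotopy; beyond that, the paper simply cites Bernstein, and your conservation-of-number count and treatment of positive-dimensional components make that direction more complete than the paper's. The genuine gap is Step~4, the converse, which your plan does not establish. Two things fail.

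\emph{The lifting.} $\mathcal F^\omega$ is weighted homogeneous, so with $\nu=e\omega$ its Jacobian $J$ at $\gamma$ satisfies $J(\nu\circ\gamma)=\mathbf 0$. Every correction equation has the form $Jy_k=R_k$, and it is solvable only if $R_k$ vanishes in the (at least one-dimensional) cokernel of $J$. For your fixed $\omega$, off-face monomials of $\mathcal F$ can contribute at relative orders below $\varepsilon^1$, where $\mathcal G$ does not appear at all. Those solvability conditions are then independent of $\mathcal G$, so no tailoring of $\mathcal G$ can remove them. Beyond that, a single $\mathcal G$ has finitely many coefficients against infinitely many conditions, and you give no convergence argument. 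The monomial change to $\omega=e_n$ does not create a Hensel situation: afterwards $\mathcal F^\omega$ is $n$ equations in $x_1,\ldots,x_{n-1}$.

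\emph{The counting.} Once $\mathcal G$ is tailored to $\gamma$ it is no longer generic, so Step~2's ``$\textrm{MV}$ simple branches'' is lost. For instance, the obvious way to meet the first correction equation is $\mathcal G^\omega(\gamma)=\mathbf 0$. But then $\mathcal H_\varepsilon^\omega(\gamma)=\mathbf 0$ for every $\varepsilon$, so no member of your homotopy is BKK-generic. ``One branch escapes, hence fewer than $\textrm{MV}$ stay'' is then measured against a total you have not controlled.

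The missing idea is to deform along a curve of systems built to have an \emph{exact}, \emph{simple} escaping root, rather than along a straight line. Take $\omega=\nu/e$ rational in the same normal cone, as you planned, and set $\gamma(t)=(\gamma_1t^{\nu_1},\ldots,\gamma_nt^{\nu_n})$. Choose $a_i\in\operatorname{supp}(f_i)\cap\Delta_i^\omega$, take $h$ generic with the same supports, and put
\[
g_i^{(t)}=f_i+t^eh_i-\frac{(f_i+t^eh_i)(\gamma(t))}{\gamma(t)^{a_i}}\,\mathbf x^{a_i}.
\]
Then $g^{(t)}(\gamma(t))=\mathbf 0$ exactly. Since $\gamma(t)^{a_i}=\gamma^{a_i}t^{\langle\nu,a_i\rangle}$ while $f_i(\gamma(t))=t^{\langle\nu,a_i\rangle}(f_i^\omega(\gamma)+O(t))$ with $f_i^\omega(\gamma)=0$, the correction is $O(t)$ and $g^{(t)}\to\mathcal F$.

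For fixed $t\neq0$, varying $h$ sweeps out all systems with these supports vanishing at $\gamma(t)$. Write $d_{i,a}=c_{i,a}\gamma(t)^a$. The rows of the Jacobian times $\operatorname{diag}(\gamma(t))$ are $\sum_a d_{i,a}(a-a_i)$, which are generic vectors in the linear span of $\Delta_i-\Delta_i$. These admit a linearly independent choice exactly because $\textrm{MV}(\Delta_\bullet)>0$. Hence for generic $h$ the root $\gamma(t)$ is simple for all small $t\neq0$.

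Now combine three facts: the bound implicit in your Step~2 (any system with these supports has at most $\textrm{MV}$ isolated torus roots counted with multiplicity), conservation of number at each isolated root of $\mathcal F$, and the fact that $\gamma(t)$ leaves every compact subset of $\mathbb T$. Together they give $\sum_k\mu_k+1\le\textrm{MV}(\Delta_\bullet)$, which is the converse.
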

The part of \Cref{prop:BKK} which implies that a generic system has mixed volume many simple torus solutions is the \mydef{BKK Theorem} \cite{Bernstein1975,Kouchnirenko1976}. We call such a system \mydef{BKK-generic}.  The part which blames the facial systems for loss is sometimes referred to as \mydef{Bernstein's other theorem} or \mydef{Bernstein's second theorem}.

\subsection{The sparse homotopy}
Suppose $\mydef{\mathcal G}=(g_1,\ldots,g_n)$ is a generic sparse system with support $\mydef{\mathcal A_{\bullet}}=(\mathcal A_1,\ldots,\mathcal A_n)$, and consider $\mathcal F=(f_1,\ldots,f_n)$, a system with Newton polytopes  $\mydef{\Delta_{\bullet}}=(\Delta_1,\ldots,\Delta_n) = (\textrm{conv}(\mathcal A_1),\ldots,\textrm{conv}(\mathcal A_n))$ whose solution count we would like to bound from above. Importantly, $\mathcal F$ may fail to be BKK-generic: it may be missing terms that appear in $\mathcal G$. The Newton polytopes of $\mathcal F$, however,  must coincide  with those of $\mathcal G$. In fact, we assume that the only missing monomials in $\mathcal F$, if any, are interior ones.  The idea we explore here is a degeneration from $\mathcal G$ to $\mathcal F$ represented by the straight-line homotopy as $\varepsilon \to 0$:
\[
\mydef{\mathcal H_{\varepsilon}} = (1-\varepsilon) \mathcal F+ \varepsilon \mathcal G.
\]
We call this a \mydef{sparse homotopy}. For generic $\varepsilon \neq 0$, the system $\mathcal H_{\varepsilon}$ has $d = \textrm{MV}(\Delta_{\bullet})$ distinct solutions in the torus, as it is BKK-generic. We write a solution branch of $\mathcal H_{\varepsilon}=\textbf{0}$ as 
\[
\mydef{\textbf{s}(\varepsilon)} = (s_1(\varepsilon),\ldots,s_n(\varepsilon)) \in \mathbb{T}.
\]
Such a solution branch admits a Puiseux series expansion as 
\[
\mydef{\textbf{s}(\varepsilon)} = \left(\varepsilon^{\omega_1}(\mydef{\widehat s_1}+ \text{ higher-order terms in }\varepsilon), \ldots,  \varepsilon^{\omega_n}(\mydef{\widehat s_n}+ \text{ higher-order terms in }\varepsilon)\right)
\]
for $\omega \in \mathbb{Q}^n$ (e.g., see \cite[Section~1.4, Theorem~1.7]{Sturmfels2002}). We call $\omega$ the \mydef{tropical vector} of the branch $\textbf{s}(\varepsilon)$. For $\varepsilon \neq 0$, one may change variables $(s_1,\ldots,s_n) \to (\varepsilon^{-\omega_1}s_1,\ldots,\varepsilon^{-\omega_n}s_n)$,  making  $\widehat{\textbf s} = (\widehat s_1,\ldots,\widehat s_n)$ the limit as $\varepsilon \to 0$. Note that, by construction, $\widehat{\textbf{s}}$ belongs to the torus.  Substituting these expansions into $\mathcal H_\varepsilon$ gives
\[
0
=
H_{i,\varepsilon}(\textbf{s}(\varepsilon))
=
\varepsilon^{\min_{b\in\operatorname{supp}(f_i)}\langle\omega,b\rangle}
\left(
f_i^\omega(\widehat{\textbf{s}})
+\text{ higher-order terms in }\varepsilon
\right).
\]
Thus, $
f_i^\omega(\widehat{\textbf{s}})=0,$
and therefore $\mathcal F^\omega(\widehat{\textbf{s}})=\textbf{0}$. This verifies \Cref{prop:BKK}: each of the $\textrm{MV}(\Delta_{\bullet})$ branches has some tropical vector which is either $\omega = \textbf{0}$ implying that the solution persists in the torus in the limit or $\omega \neq \textbf{0}$ implying the solution leaves the torus in the limit and produces a solution of a facial system. Write $\mydef{\mathcal B_{\omega}}$ for the set of branches with tropical vector $\omega$. The following lemma follows from this discussion.  
\begin{lemma}
\label{lem:bookkeeping}
Fix $\Delta_{\bullet}$ and let $\mathcal G$ be a generic sparse system with support $\mathcal A_{\bullet}$ and Newton polytopes $\Delta_{\bullet}$. Let $\mathcal F$ be a system whose support agrees with $\mathcal G$, except possibly at interior monomials of the Newton polytopes. Let $\mathcal H_{\varepsilon}=(1-\varepsilon)\mathcal F + \varepsilon \mathcal G$ be a sparse homotopy. Let $d$ be the number of isolated solutions to $\mathcal F = \textbf{0}$ in the torus, counted with multiplicity. Then 
\[
\textrm{MV}(\Delta_{\bullet}) = \sum_{\omega} |{\mathcal B_{\omega}}| = |{\mathcal B_{\textbf{0}}}| + \sum_{\omega \neq \textbf{0}} |{\mathcal B_{\omega}}|  \geq  d +  \sum_{\omega \neq \textbf{0}} |{\mathcal B_{\omega}}| 
\]
The $\omega$ in the last summand may be taken to range over directions exposing faces of the Minkowski sum $\Delta$. Any  lower bound $\delta$ for that sum produces an upper bound for $d$:
\[
d \leq \textrm{MV}(\Delta_{\bullet}) - \delta.
\]
\end{lemma}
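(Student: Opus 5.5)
The plan is to prove \Cref{lem:bookkeeping} by following the $\textrm{MV}(\Delta_\bullet)$ solution branches of $\mathcal H_\varepsilon$ as $\varepsilon\to0$, and sorting them by tropical vector. First I would note that $\mathcal H_\varepsilon$ has support contained in $\mathcal A_\bullet$ for every $\varepsilon$. Its coefficients are affine in $\varepsilon$, and at $\varepsilon=1$ they equal the generic coefficients of $\mathcal G$. So for all but finitely many $\varepsilon\in\mathbb C$ the system $\mathcal H_\varepsilon$ is BKK-generic. By \Cref{prop:BKK}, it then has exactly $\textrm{MV}(\Delta_\bullet)$ simple torus solutions. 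Viewed over the field of Puiseux series in $\varepsilon$, these give $\textrm{MV}(\Delta_\bullet)$ distinct branches $\mathbf s(\varepsilon)$. Each branch has a well-defined tropical vector $\omega\in\mathbb Q^n$, so the branches are partitioned as $\bigsqcup_\omega\mathcal B_\omega$. This gives the first equality $\textrm{MV}(\Delta_\bullet)=\sum_\omega|\mathcal B_\omega|$ directly.

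The main step is the inequality $|\mathcal B_{\mathbf 0}|\ge d$. For this I would use the conservation of number, i.e.\ the parameter continuation theorem already invoked in \Cref{lem:lower_than_lambda}. Let $x^*$ be an isolated torus solution of $\mathcal F=\mathcal H_0$ of multiplicity $m$. Take a small ball $U\subset\mathbb T$ around $x^*$ containing no other solution of $\mathcal H_0$. Then for all small $\varepsilon\neq0$, the number of solutions of $\mathcal H_\varepsilon$ in $U$, counted with multiplicity, is exactly $m$. This follows from upper semicontinuity of local intersection multiplicity, or from Rouché's theorem for holomorphic maps. These solutions lie on branches whose limit as $\varepsilon\to0$ is $x^*\in\mathbb T$. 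A branch converging to a torus point has all $\omega_i=0$, so these branches lie in $\mathcal B_{\mathbf 0}$. Distinct $x^*$ use disjoint balls, and hence disjoint branches. Summing over the isolated solutions gives $|\mathcal B_{\mathbf 0}|\ge d$. The inequality can be strict, for example when branches converge into a positive-dimensional component of $V(\mathcal F)\cap\mathbb T$.

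Next I would justify that for $\omega\neq\mathbf 0$ the nonempty $\mathcal B_\omega$ occur only for $\omega$ exposing faces of $\Delta$. The computation just before the lemma shows that $\widehat{\mathbf s}\in\mathbb T$ solves $\mathcal F^\omega$. The hypothesis that $\mathcal F$ agrees with $\mathcal G$ except at interior monomials matters here. Every vertex of each $\Delta_i$ then has a nonzero coefficient in $f_i$, so $\Delta_i$ is exactly the Newton polytope of $f_i$. The leading $\varepsilon$-order of $H_{i,\varepsilon}$ is therefore governed by $\Delta_i^\omega$. This also holds when $\mathcal F$ is nongeneric, since the $(1-\varepsilon)$ factor does not alter leading orders. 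The face $\Delta^\omega=\sum_i\Delta_i^\omega$ is a proper face of $\Delta$ when $\omega\neq0$, as long as $\Delta$ is full-dimensional. Full-dimensionality follows from positive mixed volume. Directions $\omega$ that give the same face can be grouped, and the inequality is unaffected by how they are grouped.

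Finally, if $\delta\le\sum_{\omega\neq\mathbf 0}|\mathcal B_\omega|$, then rearranging gives $d\le\textrm{MV}(\Delta_\bullet)-\delta$. I expect the main obstacle to be the conservation-of-number step: making precise that each isolated solution of multiplicity $m$ attracts exactly $m$ branches. I would handle it by citing parameter continuation applied to the one-parameter family $\mathcal H_\varepsilon$ restricted to a neighborhood of $x^*$.
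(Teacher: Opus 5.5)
Your proposal is correct and follows the same route as the paper's proof. The steps match: the BKK count of $\textrm{MV}(\Delta_\bullet)$ branches, their partition by tropical vector, the fact that isolated torus solutions of $\mathcal F$ account for at least $d$ branches in $\mathcal B_{\mathbf 0}$, the restriction to facial directions, and the final rearrangement. Your conservation-of-number argument (local degree, or Rouch\'e, on a small ball around each isolated solution) fills in a step the paper only asserts when it says an isolated torus solution must have tropical vector $\mathbf 0$. For the facial restriction, the paper gives a sharper reason than yours: if some $\Delta_i^\omega$ were a vertex, then $f_i^\omega$ would be a monomial with no torus zeros, so $\mathcal B_\omega=\emptyset$.
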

\begin{proof}
The number of branches is $\textrm{MV}(\Delta_{\bullet})$ by BKK \Cref{prop:BKK}. The tropical machinery partitions them by tropical vector, giving the first equality. The next equality is just reindexing $\mathbb{R}^n \to \{\textbf{0}\} \cup \mathbb{R}^n-\{\textbf{0}\}$. Finally, an isolated torus solution to $\mathcal F=\textbf{0}$ must have tropical vector $\textbf{0}$. The converse may fail if $\mathcal F$ has a positive-dimensional component, giving only the bound stated in the result rather than equality. That $\omega$ may be taken over the facial directions of $\Delta = \Delta_1+\cdots+\Delta_n$ is immediate, since otherwise $\omega$ would expose a mere monomial in each polynomial and produce an empty torus solution set. Finally, 
\[\delta \leq \sum_{\omega \neq \textbf{0}} |{\mathcal B_{\omega}}| \implies \textrm{MV}(\Delta_{\bullet}) \geq d + \delta \implies d \leq \textrm{MV}(\Delta_{\bullet})-\delta.
\]
\end{proof}
\noindent We call the number $\sum_{\omega \neq \textbf{0}} |\mathcal B_{\omega}|$ the \mydef{BKK defect} of the system.

\subsection{Proving lower bounds on the BKK defect}
Let $\omega\in\mathbb Q^n$ be a candidate tropical vector. Choose a
positive integer $e$ for which $\mydef{\nu}=e\omega\in\mathbb Z^n$.
We usually take the smallest such $e$, but allow a multiple when
needed for the power series expansions below. Make the change of
variables from the original coordinates $(s_1,\ldots,s_n)$ to rescaled coordinates $\mydef{\textbf{y}} = (y_1,\ldots,y_n)$:
\[
\varepsilon=\delta^e,\qquad
s_j=\varepsilon^{\omega_j} y_j = \delta^{\nu_j}y_j,\quad j=1,\ldots,n.
\]
This way, even if some $s_i$ go to $\infty$ or $0$ as $\varepsilon \to 0$, the $y_i$ may approach finite nonzero numbers. That is the intention, and we will see this is the case if the tropical vector of $\textbf{s}$ is $\omega$.  To extract the resulting potential torus point, for each equation, let
\[
\mydef{a_{i,\omega}}
=
\operatorname{ord}_{\delta}
H_{i,\delta^e}
\left(
\delta^{\nu_1}y_1,\ldots,\delta^{\nu_n}y_n
\right)
\]
be the smallest power of $\delta$ appearing after this substitution.
The \mydef{rescaled homotopy at $\omega$} is
\[
\mydef{\widetilde H_{\omega,i}(\mathbf y,\delta)}
=
\delta^{-a_{i,\omega}}
H_{i,\delta^e}
\left(
\delta^{\nu_1}y_1,\ldots,\delta^{\nu_n}y_n
\right).
\] This evaluation encodes more than the evaluation $\mathcal F(\textbf{s}(\varepsilon))$ from earlier as it retains  differential information about  the homotopy itself. 
By the definition of $a_{i,\omega}$, this expression is regular at
$\delta=0$ and does not vanish identically there.
Define the \mydef{first compatibility equations}
\[
\mydef{K_{\omega,i}(\mathbf y)}
=
\widetilde H_{\omega,i}(\mathbf y,0),
\qquad i=1,\ldots,n,
\]
and write $\mydef{\mathcal K_\omega}=(K_{\omega,1},\ldots,K_{\omega,n})$
for the \mydef{first compatibility system}. In the present setting, this is
precisely the facial system $\mathcal K_\omega=\mathcal F^\omega$.
If a branch has tropical vector $\omega$, then its rescaled coordinates
have an expansion
\[
y_j(\delta)=\mydef{\widehat y_j}+\text{higher-order terms in }\delta,
\qquad \widehat y_j\ne0,
\]
and its \mydef{leading coefficient} $\widehat{\mathbf y}\in(\mathbb C^\times)^n$
necessarily satisfies $\mathcal K_\omega(\widehat{\mathbf y})=\mathbf0$.

When $\omega\ne\mathbf0$, weighted homogeneity preserves its solution
set under the torus action of $(y_1,\ldots,y_n)\mapsto(t^{\nu_1}y_1,\ldots,t^{\nu_n}y_n)$
for $t\in\mathbb C^\times$, so every torus solution lies in a
positive-dimensional family. The first compatibility system therefore
has no isolated nonsingular torus solutions.
Its role is instead to locate the possible limiting configurations.
For each boundary regime, 
\begin{itemize}
\item we identify a part of
$\mathcal V(\mathcal K_\omega)$, 
\item we introduce coordinates $\boldsymbol\xi$ along this family, for example by a parametrization
\[
\boldsymbol\xi\longmapsto
\mathbf y(\boldsymbol\xi)
=
\bigl(y_1(\boldsymbol\xi),\ldots,y_n(\boldsymbol\xi)\bigr);
\]
\item we introduce coordinates $\mathbf u$ transverse to the family by allowing some of the rescaled coordinates to vary as
\[
y_i
=
y_i(\boldsymbol\xi)
+
\delta^{m_i}u_i,
\qquad m_i>0,
\]
and, when necessary, retain an additional coordinate for the weighted scale, equivalently choosing a point in an infinite torus orbit
\item we substitute these local coordinates into the full rescaled homotopy
$\widetilde{\mathcal H}_{\omega}$;
\item we continue the expansion in $\delta$, equating successive coefficients to zero, until we obtain a finite square system whose simple roots can be certified.
\end{itemize}
The resulting equations are called \mydef{higher-order compatibility equations}. 
Similar ideas appear in
\cite{JensenMarkwigMarkwig2008,HuberSturmfels1995}.
We package these choices into a rational substitution
\[
\mathbf s=\mathbf S(\boldsymbol\xi,\mathbf u,\delta),
\]
where $(\boldsymbol\xi,\mathbf u)$ collects the facial coordinates, transverse
coefficients, scale coordinate, and any other coefficients retained in
the expansion. The limiting value at $\mathbf u(0)$ records the finite
Puiseux data being solved for, while allowing $\mathbf u$ itself to vary
with $\delta$ accounts for still higher-order terms.
We require that, for each fixed sufficiently small $\delta\ne0$, the
map $
(\boldsymbol\xi,\mathbf u)
\longmapsto
\mathbf S(\boldsymbol\xi,\mathbf u,\delta)
$ 
is injective on the domain under consideration. We also record a square
matrix $M(\boldsymbol\xi,\mathbf u,\delta)$ of rational functions such that
\[
\mathcal L(\boldsymbol\xi,\mathbf u,\delta)
=
M(\boldsymbol\xi,\mathbf u,\delta)
\mathcal H_{\delta^e}
\bigl(\mathbf S(\boldsymbol\xi,\mathbf u,\delta)\bigr),
\]
where $M$ is invertible for $\delta\ne0$ on this domain and
$\mathcal L$ is regular at $\delta=0$. The matrix $M$ records the row
operations and divisions by powers of $\delta$ used in the calculation.
We call a square polynomial system of the form
$
\mathcal K(\boldsymbol\xi,\mathbf u)
=
\mathcal L(\boldsymbol\xi,\mathbf u,0)
=
\mathbf0$
a \mydef{finite compatibility system} for this substitution.
For a proposed tropical vector $\omega=\nu/e$, write
\[
S_j(\boldsymbol\xi,\mathbf u,\delta)
=
\delta^{\nu_j}
\bigl(c_j(\boldsymbol\xi,\mathbf u)+\delta R_j(\boldsymbol\xi,\mathbf u,\delta)\bigr),
\qquad j=1,\ldots,n.
\]
Suppose $(\widehat{\boldsymbol\xi},\widehat{\mathbf{u}})$ is a
 solution of a compatibility system $\mathcal K$. Let $q(\boldsymbol\xi,\mathbf{u})$ be a polynomial such  that
\[
q(\widehat{\boldsymbol\xi},\widehat{\mathbf u})\ne0
\]
ensures that $\mathcal L$ and all $R_j$ are regular near
$(\widehat{\boldsymbol\xi},\widehat{\mathbf u},0)$, that all $c_j$ are
regular and nonzero there, and that $M$ is defined and invertible nearby
for $\delta\ne0$. Such a polynomial may be obtained by multiplying together the factors
whose nonvanishing is required in the construction: denominators of the
substitution and row operations, pivots used to solve for auxiliary
variables, and leading coefficients that must remain nonzero to preserve
the prescribed tropical vector. Any powers of $\delta$ are discarded,
since the guard is a condition on the limiting variables
$(\boldsymbol\xi,\mathbf u)$. We call $q$ a \mydef{guard polynomial}. The displayed
identity, injectivity of the substitution, and these properties of
$q$ are verified by exact algebra.
Variables may be eliminated before certification. In that case, we
require that the eliminated equations determine those variables
uniquely as rational functions of the remaining variables on the
guarded open set, and that their Jacobian with respect to the
eliminated variables is invertible there. Substituting these
expressions in the remaining equations gives a reduced system.
Each simple solution of this reduced system whose recovered
coordinates avoid $q=0$ then determines a unique simple solution of
$\mathcal K=\mathbf0$. The guard includes the denominator and pivot
factors needed for this elimination.
\begin{lemma}[Lifting certified compatibility solutions]
\label{lem:lift_compatibility}
With the preceding notation and hypotheses, suppose interval
certification proves, either directly or through a reduced system
as above, that $\mathcal K=\mathbf0$ has $d$ distinct solutions at
which $q$ does not vanish. Then the sparse homotopy has at least $d$
distinct Puiseux solutions with tropical vector $\omega$. These
solutions give $d$ distinct points for each sufficiently small fixed
$\delta\ne0$, with $\varepsilon=\delta^e$.
\end{lemma}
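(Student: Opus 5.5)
The plan is to use the implicit function theorem on $\mathcal L(\boldsymbol\xi,\mathbf u,\delta)=\mathbf 0$ near each certified root. Interval certification proves that each of the $d$ roots $(\widehat{\boldsymbol\xi},\widehat{\mathbf u})$ of $\mathcal K=\mathcal L(\cdot,\cdot,0)$ is simple. If certification went through a reduced system, I first invoke the elimination hypotheses: the eliminated variables are rational functions of the remaining ones with invertible Jacobian on $\{q\ne0\}$. A block-determinant argument then shows the full Jacobian of $\mathcal K$ is nonsingular at the lifted point. Since $q(\widehat{\boldsymbol\xi},\widehat{\mathbf u})\ne0$, the map $\mathcal L$ is regular, hence analytic, in a neighborhood of $(\widehat{\boldsymbol\xi},\widehat{\mathbf u},0)$. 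The analytic implicit function theorem then gives a unique analytic curve $\delta\mapsto(\boldsymbol\xi(\delta),\mathbf u(\delta))$ with value $(\widehat{\boldsymbol\xi},\widehat{\mathbf u})$ at $\delta=0$ and $\mathcal L=\mathbf 0$ along it. It is a convergent power series in $\delta$.

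Next I push this curve forward. For small $\delta\ne0$, the matrix $M$ is defined and invertible, so $\mathcal L=\mathbf 0$ forces $\mathcal H_{\delta^e}(\mathbf S(\boldsymbol\xi(\delta),\mathbf u(\delta),\delta))=\mathbf 0$. Setting $\mathbf s(\delta)=\mathbf S(\boldsymbol\xi(\delta),\mathbf u(\delta),\delta)$ gives $s_j(\delta)=\delta^{\nu_j}\bigl(c_j(\widehat{\boldsymbol\xi},\widehat{\mathbf u})+O(\delta)\bigr)$. The remainder $R_j$ is regular near the point, so the $O(\delta)$ term is a convergent power series. The guard gives $c_j(\widehat{\boldsymbol\xi},\widehat{\mathbf u})\ne0$, so $\mathbf s(\delta)$ lies in the torus for small $\delta\ne0$. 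Each $s_j$ has leading exponent exactly $\nu_j$ in $\delta$, that is $\nu_j/e=\omega_j$ in $\varepsilon=\delta^e$. So $\mathbf s$ is a Puiseux solution branch of the sparse homotopy with tropical vector $\omega$.

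Finally I check distinctness. The $d$ roots $(\widehat{\boldsymbol\xi}^{(k)},\widehat{\mathbf u}^{(k)})$ are distinct, so by continuity the curves $(\boldsymbol\xi^{(k)}(\delta),\mathbf u^{(k)}(\delta))$ stay pairwise distinct for all sufficiently small $\delta$. Injectivity of $(\boldsymbol\xi,\mathbf u)\mapsto\mathbf S(\boldsymbol\xi,\mathbf u,\delta)$ for fixed small $\delta\ne0$ then makes the points $\mathbf s^{(k)}(\delta)$ pairwise distinct. Distinct points at a fixed generic $\delta$ give distinct Puiseux branches. Each such point is one of the $\textrm{MV}(\Delta_\bullet)$ solutions of the BKK-generic system $\mathcal H_\varepsilon$, so these are genuine branches counted in $\mathcal B_\omega$.

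The main obstacle is making sure analyticity and invertibility hold uniformly near $\delta=0$, rather than only for each fixed $\delta$. Here I would argue that $M$ is invertible on a punctured neighborhood, because its determinant is a rational function that is nonvanishing for $\delta\ne0$ near the point under the guard. A second subtlety is that injectivity is assumed only for each fixed $\delta\ne0$. This is enough, because distinctness is needed only at such $\delta$.
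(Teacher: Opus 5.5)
Your proposal is correct and follows essentially the same route as the paper's proof. Both arguments apply the implicit function theorem to $\mathcal L$ at each certified simple root, pass to $\mathcal H_{\delta^e}=\mathbf 0$ via invertibility of $M$ for $\delta\ne0$, read off the tropical vector $\nu/e=\omega$ from the guarded nonzero leading coefficients $c_j$, and obtain distinctness from distinct limits together with injectivity of $\mathbf S$; your additions (the block-determinant argument for certification through a reduced system, and identifying the lifted curves as branches counted in $\mathcal B_\omega$) only make explicit what the paper leaves implicit.
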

\begin{proof}
Let $(\widehat{\boldsymbol\xi},\widehat{\mathbf u})$ be one of these
solutions. Certification proves that it is simple, using the preceding
elimination argument when a reduced system is certified. The implicit
function theorem therefore gives unique convergent power series
$\boldsymbol\xi(\delta)$ and $\mathbf u(\delta)$ with $
\boldsymbol\xi(0)=\widehat{\boldsymbol\xi},
\mathbf u(0)=\widehat{\mathbf u}$, 
and $
\mathcal L(\boldsymbol\xi(\delta),\mathbf u(\delta),\delta)=\mathbf0.$
For sufficiently small $\delta\ne0$, the identity defining
$\mathcal L$ and invertibility of $M$ imply
\[
\mathcal H_{\delta^e}
\bigl(
\mathbf S(\boldsymbol\xi(\delta),\mathbf u(\delta),\delta)
\bigr)
=
\mathbf0.
\]
Moreover,
\[
S_j(\boldsymbol\xi(\delta),\mathbf u(\delta),\delta)
=
\delta^{\nu_j}
\bigl(
c_j(\widehat{\boldsymbol\xi},\widehat{\mathbf u})
+
O(\delta)
\bigr).
\]
The guard ensures that these leading coefficients are nonzero, so the
tropical vector is $\nu/e=\omega$.
The $d$ solutions of $\mathcal K=\mathbf0$ have distinct limits and
hence give distinct pairs
$(\boldsymbol\xi(\delta),\mathbf u(\delta))$ for sufficiently small
$\delta$. Injectivity of the substitution then gives $d$ distinct
solutions of the original homotopy.
\end{proof}
We are able to apply \Cref{lem:lift_compatibility} to account for
$4096$ solutions. The setup is this.
Construct the homotopy
\[
\mathcal H_\varepsilon
=
(1-\varepsilon)\mathcal F^{(4)}
(\mathbf s;\boldsymbol\lambda)
+
\varepsilon\mathcal G(\mathbf s).
\]
For each proposed boundary regime, we are given a positive integer $e$
and a rational substitution
$\mathbf s=\mathbf S(\mathbf u,\delta)$. We then make the
regime-specific ramification substitution $\varepsilon=\delta^e$.

The pair $(e,\mathbf S)$ encodes the proposed asymptotic direction.
Indeed, suppose
\[
S_j(\mathbf u,\delta)
=
\delta^{\nu_j}
\bigl(c_j(\mathbf u)+O(\delta)\bigr).
\]
On the locus where every $c_j(\mathbf u)$ is nonzero, the corresponding
tropical vector is $\omega=\nu/e$. The integer $e$ need not be the
smallest denominator-clearing ramification index, since a larger value
may be needed to express the higher-order corrections using integral
powers of $\delta$.

We now substitute $\varepsilon=\delta^e$ and
$\mathbf s=\mathbf S(\mathbf u,\delta)$ into the homotopy. At this point, division by the lowest powers of $\delta$ in each equation is not sufficient to make the solutions regular. Indeed, some polynomials with the same valuation may cancel in the system. Exact row
operations and divisions produce a transformed system
\[
\mathcal L(\mathbf u,\delta)
=
M(\mathbf u,\delta)
\mathcal H_{\delta^e}
\bigl(\mathbf S(\mathbf u,\delta)\bigr)
\]
that is regular at $\delta=0$.

Throughout the construction, every denominator, pivot, divided factor,
reconstruction denominator, and leading coefficient required to be
nonzero is accumulated into a guard polynomial $q(\mathbf u)$. Thus the
guard is built by the verification procedure rather than imposed in
advance.

The finite compatibility system is
\[
\mathcal K(\mathbf u;\boldsymbol\lambda)
=
\mathcal L(\mathbf u,0;\boldsymbol\lambda)
=
\mathbf0.
\]
Here we have consolidated all facial and transverse coordinates into
the single tuple $\mathbf u$. The result is a square polynomial system
over $\mathbb Q[\boldsymbol\lambda]$. Our job is to  show a lower bound on the number of simple roots, generically.

Now, \Cref{lem:lift_compatibility} suggests that we need only one
certified instance of
$\mathcal K(\mathbf u;\boldsymbol\lambda)$. Our auxiliary code first
retains $\boldsymbol\lambda$ as parameters and solves a fiber over a
\texttt{Float64} parameter value
$\boldsymbol\lambda^{\mathrm{fl}}$ using monodromy. It performs a
preliminary numerical check that the resulting solutions are distinct
and nonsingular.

The code then chooses a nearby exact rational parameter value
$\boldsymbol\lambda^*\in\mathbb Q^{63}$ and tracks the solutions from
the fiber over $\boldsymbol\lambda^{\mathrm{fl}}$ to the fiber over
$\boldsymbol\lambda^*$. It substitutes $\boldsymbol\lambda^*$ into
$\mathcal K$ symbolically and certifies the tracked solutions using
interval arithmetic.

For every certified solution interval $\square$, the code also verifies
$0\notin q(\square)$. Thus all coordinate substitutions, divisions, pivots,
and reconstructions used to produce $\mathcal L$ are valid near the
certified solution, and all leading coefficients appearing in
$\mathbf S$ are nonzero. \Cref{lem:lift_compatibility} therefore
produces a solution branch of the original homotopy. The coordinatewise
orders encoded by $\mathbf S$, divided by $e$, give the tropical vector
of that branch.

This structure makes  the calculation a verification procedure rather than a
method whose correctness depends on how the boundary data were
discovered. An incorrect proposed direction or substitution will fail
the exact transformation check, fail to produce a system regular at
$\delta=0$, fail to have the required simple solutions and therefore fail certification, or solutions will meet the
guard locus. Conversely, if all of these checks succeed, the proposed
data give a valid collection of lifted branches, regardless of whether
they were found by geometric analysis, computer experimentation, or
simply guessed.

Thus the proposed data for each regime are
 $e$, $\mathcal L(\mathbf u,\delta)$, and
$\mathbf S(\mathbf u,\delta)$. The finite compatibility system
$\mathcal K$ and the guard polynomial $q$ are then obtained and verified
by the auxiliary code.

\section{Boundary details}
\label{sec:BoundaryDetails} 
In this section, we summarize the results we certified on $26$ boundary regimes.  We point the reader to the auxiliary code repository 
 \href{https://github.com/tbrysiewicz/AltsProblem}{here} for the code and certificates for the solutions to the compatibility systems. 
The numerical computations and certification were performed in \texttt{HomotopyContinuation.jl} \cite{HCjl}. We remark that the newly included option \texttt{duplicate\_check := certified} in their \texttt{monodromy\_solve} function was especially helpful in obtaining these certifications. Indeed, many of these solution sets are numerically ill-conditioned, and so without certification, the software may classify regular solutions as singular and not certify them. 

\begin{table}[!htpb]
{\scriptsize
\[\renewcommand{\arraystretch}{1.15}
\begin{array}{c|l|c|c|c}
\text{Boundary} & \text{Tropical vector} & \text{Regime} & \text{Count} & \text{Total contribution}\\
\hline
1 & \omega_1=(1,0,0,0,0,0,-1,0,1)/2 & \mathcal B_{1,1} & 1322 & 1322\\
\hline
2 & \omega_2=(1,0,1,1,1,0,0,0,0) & \mathcal B_{2,1} & 335 & 335\\
\hline
\multirow{2}{*}{3} & \multirow{2}{*}{\ensuremath{\omega_3=(-1,-1,-1,-1,-1,-1,0,-1,-1)}} & \mathcal B_{3,1} & 1648 & \multirow{2}{*}{2190}\\
\cline{3-4}
& & \mathcal B_{3,2} & 542 &\\
\hline
\multirow{4}{*}{4} & \omega_4=(-1,-1,-2,-1,-2,-1,0,-2,-1) & \mathcal B_{4,1} & 6 & \multirow{4}{*}{104}\\
\cline{2-4}
& \omega_4/2 & \mathcal B_{4,2} & 14 &\\
\cline{2-4}
& \omega_4/3 & \mathcal B_{4,3} & 54 &\\
\cline{2-4}
& 4\omega_4/5 & \mathcal B_{4,4} & 30 &\\
\hline
\multirow{3}{*}{5} & \omega_5/3=(-5,-3,-8,-4,-7,-3,1,-6,-2)/3 & \mathcal B_{5,1} & 18 & \multirow{3}{*}{50}\\
\cline{2-4}
& \omega_5/5 & \mathcal B_{5,2} & 20 &\\
\cline{2-4}
& \omega_5/6 & \mathcal B_{5,3} & 12 &\\
\hline
6 & \omega_6/4=(-6,-4,-10,-5,-9,-4,1,-8,-3)/4 & \mathcal B_{6,1} & 16 & 16\\
\hline
7 & \omega_7/4=(-4,-2,-6,-3,-5,-2,1,-4,-1)/4 & \mathcal B_{7,1} & 16 & 16\\
\hline
8 & \omega_8/2=(-3,-1,-5,-3,-4,-2,1,-3,-1)/2 & \mathcal B_{8,1} & 4 & 4\\
\hline
9 & \omega_9=(1,-1,2,1,2,1,0,0,-1) & \mathcal B_{9,1} & 8 & 8\\
\hline
10 & \omega_{10}=(5,1,6,3,4,1,-2,2,2)/4 & \mathcal B_{10,1} & 8 & 8\\
\hline
\multirow{2}{*}{11} & \omega_{11,1}=(14,2,16,8,10,2,-6,4,-1)/6 & \mathcal B_{11,1} & 6 & \multirow{2}{*}{16}\\
\cline{2-4}
& \omega_{11,2}=(14,2,16,8,10,2,-6,4,3)/10 & \mathcal B_{11,2} & 10 &\\
\hline
\multirow{2}{*}{12} & \omega_{12,1}=(6,1,6,3,4,1,-3,2,0)/3 & \mathcal B_{12,1} & 6 & \multirow{2}{*}{10}\\
\cline{2-4}
& \omega_{12,2}=(9,1,10,5,6,1,-4,2,-1)/4 & \mathcal B_{12,2} & 4 &\\
\hline
13 & \omega_{13}=(-3,-2,-6,-3,-5,-2,0,-4,-1)/3 & \mathcal B_{13,1} & 6 & 6\\
\hline
14 & \omega_{14}=(-4,-3,-8,-4,-7,-3,0,-6,-2)/4 & \mathcal B_{14,1} & 4 & 4\\
\hline
15 & \omega_{15}=(-8,-5,-10,-6,-6,-2,4,-7,-3)/2 & \mathcal B_{15,1} & 2 & 2\\
\hline
16 & \omega_{16}=(-2,-1,-3,-2,-2,-1,1,-2,-1) & \mathcal B_{16,1} & 2 & 2\\
\hline
17 & \omega_{17}=(-4,-2,-5,-4,-2,-1,3,-3,-2) & \mathcal B_{17,1} & 1 & 1\\
\hline
18 & \omega_{18}=(-3,-1,-6,-4,-5,-3,1,-4,-2)/2 & \mathcal B_{18,1} & 2 & 2\\
\hline
& & & 4096 & 4096\\
\end{array}\]
}
\caption{The partition of $4096$ into the $26$ boundary regimes. }
\label{tab:boundary_census}
\end{table}For each regime in \Cref{tab:boundary_census}, the auxiliary code is given
a ramification index $e$ and a rational substitution
$\mathbf s=\mathbf S(\mathbf u,\delta)$. It substitutes into
$\mathcal H_{\delta^e}$, performs the prescribed row cancellations,
and divides out the resulting powers of $\delta$ to construct a square
system $\mathcal L(\mathbf u,\delta;\boldsymbol\lambda)$ that is regular
at $\delta=0$. The finite compatibility system is
$\mathcal K(\mathbf u;\boldsymbol\lambda)
=\mathcal L(\mathbf u,0;\boldsymbol\lambda)$.
Exact algebra verifies the cancellations and the identities relating
these systems to the original homotopy. A guard polynomial $q(\mathbf u)$
records the required nonvanishing denominators, elimination pivots,
and leading coefficients of the original coordinates.
The parametrized compatibility system is solved numerically using
monodromy. The solutions are then tracked to a nearby exact rational
parameter value, where interval arithmetic certifies distinct simple
solutions. On every certified solution interval $\square$, we also
verify $0\notin q(\square)$. Together with injectivity of the substitution
on the guarded punctured chart, \Cref{lem:lift_compatibility} therefore
gives distinct branches of the original homotopy with the prescribed
tropical vector.

One can install the package \texttt{AltCertify.jl} from our repository. Then loading the datum of a boundary regime is simple, and certifying the result is a single command. 

\begin{lstlisting}
julia> B = B11
Boundary regime: B_1, regime 1
  omega          = Rational{Int64}[1//2, 0, 0, 0, 0, 0, -1//2, 0, 1//2]
  e              = 2
  facial         = u[1, 2, 5, 6]
  transverse/etc = u[3, 4, 7, 8, 9]
  guard          = u[1]*u[2]*u[5]*u[6]

  substitution:
    s[1] = u[1]*delta[1]
    s[2] = u[4]*delta[1] - 2*u[6]
    s[3] = -u[1]*u[6]*delta[1] - u[3]*u[5]*delta[1] + u[5]^2 + u[8]*delta[1]^2
    s[4] = u[3]*delta[1] - 2*u[5]
    s[5] = -u[5]*u[6] + u[7]*delta[1]^2
    s[6] = u[6]
    s[7] = u[5]//(u[1]*delta[1])
    s[8] = -u[2]*u[5]*delta[1] + u[4]*u[6]*delta[1] - u[6]^2 + u[9]*delta[1]^2
    s[9] = u[2]*delta[1]

  extra row factors:
    none
\end{lstlisting} 

\begin{lstlisting}

julia> certify(B11)
1. MONODROMY SOLVE
monodromy solutions: 1322
2. SOFT CERTIFICATION AT FLOAT64 PARAMETERS
soft-certified distinct solutions: 1322
3. NEARBY EXACT RATIONAL PARAMETERS
maximum parameter displacement: 1.4060347687259386e-12
exact parameter vector constructed.
4. PARAMETER HOMOTOPY TO RATIONAL FIBER
successfully tracked solutions: 1322
5. HARD CERTIFICATION AT EXACT RATIONAL PARAMETERS
hard-certified distinct solutions: 1322
6. GUARD CHECK ON CERTIFIED BOXES
guard excludes zero on 1322 / 1322 boxes
PASS: 1322 distinct exact-rational certified solutions, all guarded.
\end{lstlisting}
Sometimes it takes several tries, but once it succeeds it writes all the certification data to a file.

The takeaway from these computations is \Cref{prop:boundary_total},
which states that at least $4096$ of the $5538$ branches of a generic
sparse perturbation approach the boundary as $\varepsilon\to0$, rather
than a torus solution of $\mathcal F^{(4)}$. Consequently, \Cref{cor:lambda_upper_bound} translates this to an upper bound for the number of isolated solutions to a generic instance of $\mathcal F^{(4)}=\textbf{0}$, namely $1442$. This is a consequence of \Cref{lem:bookkeeping}. We summarize our certified lower bound for the number of regular isolated solutions to a generic instance of the nine-point path synthesis problem in \Cref{lem:lower_bound}. In particular, we note that we use the same formulation of this problem as \cite{MSW}. It is significantly better conditioned than the equivalent system developed in this text. It solves for the $6\cdot 1442$ mechanisms instead of the $1442$ coupler curves. These results combine to prove \autoref{thm:alt}. 

\begin{proposition}
\label{prop:boundary_total}
For a generic member of the independent-parameter family $\mathcal F^{(4)}$, at least $4096$ of the $5538$ torus branches of a generic sparse perturbation approach the boundary as $\varepsilon\to0$.
\end{proposition}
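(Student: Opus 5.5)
The plan is to assemble \Cref{prop:boundary_total} from the $26$ certified boundary regimes of \Cref{tab:boundary_census}, applying \Cref{lem:lift_compatibility} to each regime and then summing. For each regime $\mathcal B_{k,j}$ I will take the data $(e,\mathbf S,\mathcal L)$ supplied by the auxiliary code. First I verify by exact algebra that $\mathcal L=M\,\mathcal H_{\delta^e}(\mathbf S)$ with $M$ invertible for $\delta\neq0$, and that $\mathcal L$ is regular at $\delta=0$. Next I check that $\mathbf S$ is injective on the guarded chart; this can be done by exhibiting a rational inverse, as with the displayed substitution for $\mathcal B_{1,1}$, where $u_1,u_2,u_6$ are read off from $s_1,s_9,s_6$ and the remaining $u_i$ follow triangularly. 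Finally I record that the leading orders of $\mathbf S$ divided by $e$ equal the tabulated $\omega$.

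The certification step then proceeds as follows. I solve $\mathcal K(\mathbf u;\boldsymbol\lambda)$ at one exact rational parameter $\boldsymbol\lambda^*$ and obtain the tabulated number of interval-certified distinct simple roots, with $0\notin q(\square)$ on every box. \Cref{lem:lift_compatibility} then gives that many distinct Puiseux branches of the sparse homotopy with tropical vector $\omega$.

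Two issues must be handled to pass from one instance to a generic $\boldsymbol\lambda$:
\begin{itemize}
\item \emph{Persistence of roots.} Simple roots of $\mathcal K$ that avoid $q=0$ persist on a Zariski open dense set of $\boldsymbol\lambda$, by the implicit function theorem together with the standard parameter-continuation argument used in \Cref{lem:lower_than_lambda}. Since there are finitely many regimes, intersecting these open sets gives a single generic locus on which all counts hold at once.
\item \emph{Choice of $\mathcal G$.} The statement concerns a generic sparse perturbation $\mathcal G$, so the exact identities must hold for symbolic or generic coefficients of $\mathcal G$. I intend that the code treats $\mathcal G$'s coefficients as parameters alongside $\boldsymbol\lambda$, or enters them only through terms that vanish at $\delta=0$. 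I would check which of these the construction of $\mathcal L$ uses, and, if $\mathcal G$ enters $\mathcal K$, include its coefficients in the parameter space of the monodromy and certification.
\end{itemize}

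The next task is disjointness across regimes. Branches with different tropical vectors are distinct, so regimes in different rows of \Cref{tab:boundary_census} count disjoint branches; this includes boundaries 4 and 5, whose rows use different multiples of a common vector. The delicate case is boundary $3$, where $\mathcal B_{3,1}$ and $\mathcal B_{3,2}$ share $\omega_3$. There I need the two substitutions to have disjoint images for small $\delta$, for instance because they approach different components of $\mathcal V(\mathcal K_{\omega_3})$. I would certify this by evaluating a separating leading coefficient, such as the coefficient of a higher-order term in one of the $s_j$, on the certified boxes and showing it is zero in one regime and nonzero in the other. Alternatively I would show that the leading points $\widehat{\mathbf y}$ lie in disjoint closed subsets. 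Once disjointness holds, every counted branch has $\omega\neq\mathbf 0$ and hence leaves the torus, and summing the counts gives
\[1322+335+2190+104+50+16+16+4+8+8+16+10+6+4+2+2+1+2=4096.\]

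The main obstacle is not the bookkeeping but the soundness of each individual certification, and I expect the ill-conditioned regimes to be the bottleneck. For these I would first try the certified duplicate check in monodromy and, if needed, rescale the $\mathbf u$ variables before interval certification. The second-hardest point is the same-$\omega$ disjointness argument for boundary $3$.
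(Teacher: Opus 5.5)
Your proposal is correct and follows essentially the paper's argument. You apply \Cref{lem:lift_compatibility} regime by regime and sum the certified counts to $4096$. Different tropical vectors give disjoint branches, and you intersect the finitely many Zariski-open sets in the joint target-and-perturbation parameter space so the counts hold simultaneously.

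The only case needing separate treatment is boundary $3$. The paper separates its two regimes by the facial-coordinate conditions $\xi_2-\xi_4\neq0$ and $\xi_2=\xi_4$, which is your "zero in one regime, nonzero in the other" separating-coefficient check.
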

\begin{proof}
The certified contributions established above are
\[1322,\ 335,\ 2190,\ 104,\ 50,\ 16,\ 16,\ 4,\ 8,\ 8,\ 16,\ 10,\ 6,\ 4,\ 2,\ 2,\ 1,\ 2.\]
Their sum is
\[1322+335+2190+104+50+16+16+4+8+8+16+10+6+4+2+2+1+2=4096.\]
Branches with different tropical vectors are disjoint. Boundary $3$ is the only repeated tropical vector in the list, and its ordinary and special regimes are disjoint by the conditions which define them $\xi_2-\xi_4\ne0$ and $\xi_2=\xi_4$, respectively under some parametrization of the facial zero-set. For each regime the lower bound holds on a nonempty Zariski-open subset of the same target-and-perturbation parameter space. Intersecting these finitely many open sets makes the contributions simultaneous. For a fixed sufficiently small nonzero $\varepsilon$, the $4096$ reconstructed points are therefore distinct and may all be subtracted from the mixed-volume count.
\end{proof}
\begin{corollary}
\label{cor:lambda_upper_bound}
A generic system $\mathcal F^{(4)}(\mathbf s,\boldsymbol\lambda)=\mathbf0$ has at most
\[5538-4096=1442\]
isolated solutions in $(\mathbb C^\times)^9$, counted with multiplicity. Consequently, the generic nine-point system $\mathcal F^{(3)}=\mathbf0$ also has at most $1442$ isolated torus solutions.
\end{corollary}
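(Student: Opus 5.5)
The plan is to combine \Cref{prop:boundary_total} with \Cref{lem:bookkeeping}, and then pass from the independent-parameter family $\mathcal F^{(4)}$ to the nine-point family $\mathcal F^{(3)}$ by a specialization argument.

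For the first claim, fix a generic $\boldsymbol\lambda$ and set $\mathcal F=\mathcal F^{(4)}(\mathbf s;\boldsymbol\lambda)$. Let $\mathcal G$ be a generic sparse system with the same supports $\mathcal A_\bullet$.

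The first step is to check the hypothesis of \Cref{lem:bookkeeping}: the support of $\mathcal F$ can differ from that of $\mathcal G$ only at interior monomials. For generic $\boldsymbol\lambda$ every monomial of each $\Delta_i^{(3)}$ that appears for some parameter value appears with a nonzero coefficient. So the support of $\mathcal F$ equals $\mathcal A_\bullet$ on a Zariski-open set of $\boldsymbol\lambda$, and the hypothesis holds trivially.

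Next, form the sparse homotopy $\mathcal H_\varepsilon=(1-\varepsilon)\mathcal F+\varepsilon\mathcal G$. By \Cref{prop:BKK} it has $\textrm{MV}(\Delta^{(3)}_\bullet)=5538$ branches. By \Cref{prop:boundary_total}, on a nonempty Zariski-open set of $(\boldsymbol\lambda,\mathcal G)$, at least $\delta=4096$ of these branches have nonzero tropical vector.

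\Cref{lem:bookkeeping} then gives $d\le 5538-4096=1442$, where $d$ is the number of isolated torus solutions of $\mathcal F$ counted with multiplicity. Since generic conditions on $\boldsymbol\lambda$ and on $\mathcal G$ are finitely many open conditions, I can choose $\mathcal G$ depending on $\boldsymbol\lambda$ (or jointly), and the bound holds for generic $\boldsymbol\lambda$.

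For the second claim I have to be careful, because $\mathcal F^{(3)}$ is a specialization of $\mathcal F^{(4)}$ along the image of the map $(\mathbf z,\mathbf w)\mapsto\boldsymbol\lambda(\mathbf z,\mathbf w)$. That image is not generic in the $63$-dimensional parameter space, so the first claim does not apply to it directly. \Cref{lem:lower_than_lambda} only gives the inequality in the wrong direction, and it concerns regular solutions.

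Instead I will use upper semicontinuity of the isolated solution count, counted with multiplicity, under specialization. This is the parameter continuation theorem of \cite{BorovikBreiding2025}: for a square family, the number of isolated solutions with multiplicity in an open set ($\mathbb T$ here) at any parameter value is at most the generic count. Applying this to the family $\mathcal F^{(4)}$ over all of $\mathbb C^{63}$ bounds the isolated torus solutions at every parameter value, in particular at $\boldsymbol\lambda(\mathbf z,\mathbf w)$, by the generic number $1442$.

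The main obstacle is making this semicontinuity precise. Isolated solutions at a special fiber could a priori come from branches escaping to infinity in the generic fiber, but those would only lower the count, never raise it. The standard argument runs as follows: each isolated solution of multiplicity $m$ at the special fiber is the limit of exactly $m$ solution paths of a nearby generic fiber, counted with multiplicity, by conservation of intersection number in a small ball. Distinct isolated solutions have disjoint balls, so the total is at most the generic count of isolated torus solutions, which is $\le 1442$.

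I would also note that, via \Cref{lem:general_points_avoid_bad_curves} and the birationality in \Cref{thm:coupler_to_net}, restricting to the torus chart $s_1\ne0$ loses no coupler curves for generic precision points. That remark is really for the subsequent proof of \Cref{thm:alt} rather than this corollary.
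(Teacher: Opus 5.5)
Your proposal is correct and follows the paper's proof. The bound for generic $\boldsymbol\lambda$ comes from \Cref{lem:bookkeeping} together with \Cref{prop:boundary_total}. The bound for $\mathcal F^{(3)}$ comes from the specialization $\boldsymbol\lambda=\mathbb V_{\mathcal P}^{-1}\mathbb V_{\mathcal P^c}$ and the parameter continuation theorem, which you usefully unpack as a conservation-of-number argument in small balls inside the torus.

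One small slip: \Cref{lem:lower_than_lambda} is not in the wrong direction. It bounds the generic regular count of the nine-point system by that of $\mathcal F^{(4)}$, which is exactly the direction you need. Its only shortcoming here is that it concerns regular solutions rather than isolated solutions counted with multiplicity, and that is why your semicontinuity argument is the appropriate tool.
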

\begin{proof}
The first assertion follows from \Cref{lem:bookkeeping} and \Cref{prop:boundary_total}. The system $\mathcal F^{(3)}$ is obtained from $\mathcal F^{(4)}$ by the specialization $\boldsymbol\lambda=\mathbb V_{\mathcal P}^{-1}\mathbb V_{\mathcal P^c}$. The bound on the nine-point path synthesis degree follows from the parameter continuation theorem \cite{BorovikBreiding2025}.
\end{proof}

\begin{lemma}
\label{lem:lower_bound}
The nine-point path synthesis problem generically admits at least $1442$ coupler curves.
\end{lemma}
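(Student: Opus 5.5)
The plan is to certify one instance and then propagate to generic parameters using the persistence of simple solutions, in the same way as the boundary analysis.

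\emph{Choice of formulation.} Following the remark before \Cref{prop:boundary_total}, I would not certify $\mathcal F^{(3)}$, which is badly conditioned. Instead I would use the isotropic-coordinate formulation of \cite{MSW}. There the unknowns are a labeled mechanism $\mathcal M\in\mathbb M$ together with one configuration per precision point. The square system consists of the fixed-length equations \eqref{eq:lengths}, the closure equations \eqref{eq:linkclosureequations}, and the coupler-point equations \eqref{eq:couplerpointequation} equated to $p_1,\dots,p_9$, after the usual elimination of rotation variables.

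\emph{One certified instance.} I would fix nine rational (or Gaussian-rational) precision points $p_1^*,\dots,p_9^*$ and solve this system by monodromy in \texttt{HomotopyContinuation.jl}. The solutions would then be certified with interval arithmetic at the exact parameter value. The target is $6\cdot1442=8652$ distinct nonsingular certified solutions. On each certified box I would also check a guard: $u_0\widetilde u_0(e_1-e_2)\neq0$ (nonvanishing of the determinant in \Cref{thm:mechanism_to_net}), $r_i\neq0$, and that the mechanism is nondegenerate, so that $\Psi$ and $\Phi$ are defined at it.

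\emph{From mechanisms to curves.} Next I need to pass from the $8652$ mechanisms to $1442$ distinct coupler curves. For each certified mechanism $\mathcal M$, the curve $\mathcal C_{\mathcal M}=\Phi(\Psi(\mathcal M))$ passes through all nine points. By \Cref{lem:recover_net} and the Roberts cognate theorem, each generic curve arises from exactly six labeled mechanisms. So the certified mechanisms fall into classes of six, and I expect to verify this explicitly. I would compute $\Psi(\mathcal M)$ numerically from the certified boxes, as Plücker coordinates in the chart $\mathbf t$. I would then check that exactly $1442$ pairwise-disjoint boxes arise, each hit by six mechanisms. Disjointness of the boxes proves the nets, and hence by \Cref{thm:coupler_to_net} the curves, are distinct. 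Alternatively, I could certify the $1442$ solutions of $\mathcal F^{(3)}$ directly at $(p_i^*)$, using the mechanism solutions as starting points mapped through $\Psi$; this gives simple solutions in the Grassmannian chart directly.

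\emph{Genericity.} Simple solutions persist under small perturbation of the precision points by the implicit function theorem. Moreover, the set of parameters where a given count of regular solutions is achieved contains a nonempty Zariski open set, by the parameter continuation theorem \cite{BorovikBreiding2025}. Hence generic nine-tuples also admit at least $1442$ curves. By \Cref{lem:general_points_avoid_bad_curves}, these curves lie in the open set where the correspondence holds.

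\emph{Main obstacle.} I expect the hardest step to be the identification of cognate classes rigorously. Numerically coincident nets must be shown equal, not merely close, so I would instead argue via the explicit cognate construction. Applying the six orderings of \Cref{thm:net_to_coupler} to one certified mechanism yields exact rational maps. Their images must coincide with other certified boxes, which reduces the grouping to interval containment checks.
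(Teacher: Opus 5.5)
Your proposal is correct and follows essentially the paper's route: certify $6\cdot 1442=8652$ distinct regular solutions of the mechanism formulation of \cite{MSW} at one exact instance, transfer this to generic precision points by parameter continuation \cite{BorovikBreiding2025}, and divide by six via the Roberts cognate theorem. The explicit cognate grouping you single out as the main obstacle is not needed, and the paper omits it. For generic points, every curve through them is a generic coupler curve carrying at most six labeled mechanisms, so at least $8652/6=1442$ distinct curves arise; even at a fixed instance, a lower bound would only require pairwise distinctness of the curves, never equality within cognate classes.
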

\begin{proof}
We solve the system in \cite{MSW} and obtain $6 \cdot 1442$ certified mechanisms drawing coupler curves through nine points in the plane. Thus, the general count of that system must be at least this amount. Accounting for Roberts cognates proves the result. 
\end{proof}
We have now bounded the count in both directions by $1442$ and have thus proved \Cref{thm:alt}.
\begin{proof}[Proof of \Cref{thm:alt}]
Let $d_{\textrm{Alt}}$ be the generic number of four-bar coupler curves through nine generic precision points in the plane. By the construction in \Cref{sec:SparseReduction}, these curves correspond to isolated torus solutions of $\mathcal F^{(3)}=\mathbf0$ on the open set where the correspondence is defined. Therefore \Cref{cor:lambda_upper_bound} gives $d_{\textrm{Alt}}\leq1442$.

On the other hand, \texttt{certify\_alt\_mechanisms.jl} certifies $6\cdot 1442$ distinct regular solutions to one instance of the formulation of the nine-point path synthesis problem in \cite{MSW} in the mechanism coordinates. By \cite{BorovikBreiding2025}, this is a lower bound for the generic number of mechanisms, and so $1442$ is a lower bound for the generic number of coupler curves.  The upper and lower bounds coincide, proving that $d_{\textrm{Alt}}=1442$.
\end{proof}

\newpage

\bibliographystyle{plainnat}
\bibliography{references}

\newpage

\section{Appendix}
\label{sec:Appendix}

\begin{lemma}
\label{lem:tau_in_elliptic}
Let $E$ be a smooth projective  curve of genus one over $\mathbb C$,
and let $\tau$ be a fixed-point-free involution of $E$.
Choose an origin $O\in E$, and denote the resulting group law by
$\oplus$. Then there is a nonzero point $T\in E$ with
$T\oplus T=O$ such that $\tau(P)=P\oplus T$ for every $P\in E$.
\end{lemma}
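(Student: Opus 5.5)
The plan is to use the standard structure of automorphisms of a genus-one curve. Once an origin $O$ is fixed, $(E,O)$ is an elliptic curve. Every automorphism $\sigma$ of the curve $E$ can then be written uniquely as $\sigma(P)=\alpha(P)\oplus c$, where $c=\sigma(O)$ and $\alpha$ is a group automorphism fixing $O$. To see this, compose $\sigma$ with the translation by $\ominus\sigma(O)$. The result is an automorphism of $E$ fixing $O$, hence an isomorphism of elliptic curves, hence a group homomorphism. The proof therefore reduces to showing that for $\tau$ the linear part $\alpha$ is the identity. The constraints $\tau^2=\mathrm{id}$ and the absence of fixed points then pin down the translation part.

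The key steps, in order, are as follows.

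\begin{enumerate}
\item Write $\tau(P)=\alpha(P)\oplus T$ with $T=\tau(O)$ and $\alpha\in\operatorname{Aut}(E,O)$.
\item Suppose $\alpha\neq\mathrm{id}$. Then $1-\alpha$ is a nonzero endomorphism of $E$, because $(1-\alpha)(P)=O$ for all $P$ would force $\alpha=\mathrm{id}$. A nonzero endomorphism of an elliptic curve is an isogeny, hence surjective.
\item By surjectivity there is a $P$ with $P\ominus\alpha(P)=T$, that is, $\alpha(P)\oplus T=P$. This $P$ is a fixed point of $\tau$, contradicting the hypothesis. Hence $\alpha=\mathrm{id}$ and $\tau(P)=P\oplus T$.
\item Since $\tau$ has no fixed points, $T\neq O$.
\item From $\tau^2=\mathrm{id}$ we get $P\oplus T\oplus T=P$, so $T\oplus T=O$.
\end{enumerate}

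Over $\mathbb C$ the whole argument can also be run analytically. Write $E=\mathbb C/L$ with $O=0$; automorphisms lift to affine maps $z\mapsto az+b$ with $aL=L$. Step 3 is then the observation that $(1-a)z=b$ is solvable in $\mathbb C$ whenever $a\neq1$, and its solution gives a fixed point in $\mathbb C/L$.

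The only step with real content is step 2: that an automorphism fixing $O$ is a group homomorphism, and that $1-\alpha$ is surjective when $\alpha\neq\mathrm{id}$. I would cite the rigidity theorem that morphisms of elliptic curves sending $O$ to $O$ are homomorphisms, and the fact that nonconstant morphisms of projective curves are surjective. Alternatively, the lattice picture above handles both facts at once. Everything else is immediate.
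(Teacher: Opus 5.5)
Your proof is correct and is essentially the paper's argument. The paper also sets $T=\tau(O)$ and notes that $\alpha(P)=\tau(P)\ominus T$ is a group automorphism; if $\alpha\neq\mathrm{id}$, surjectivity of the nonzero endomorphism $P\mapsto P\ominus\alpha(P)$ yields a fixed point of $\tau$, and once $\alpha=\mathrm{id}$, $T\neq O$ and $T\oplus T=O$ follow from fixed-point-freeness and $\tau^2=\mathrm{id}$.
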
\begin{proof}
Write $\ominus$ for subtraction in the group law.
Set $T=\tau(O)$. The automorphism $\alpha(P)=\tau(P)\ominus T$
fixes $O$, so it is a group automorphism
\cite[Chapter II, Proposition 1.5]{MilneEllipticCurves}.
If $\alpha\ne\mathrm{id}$, then the map
$P\mapsto P\ominus\alpha(P)$ is a nonzero group endomorphism,
hence a nonconstant morphism from the projective curve $E$
to itself and therefore surjective. There would consequently
exist $P$ with $P\ominus\alpha(P)=T$, giving $\tau(P)=P$,
a contradiction. Thus $\tau(P)=P\oplus T$. The identity
$\tau^2=\mathrm{id}$ gives $T\oplus T=O$, and the absence
of fixed points gives $T\ne O$.
\end{proof}
\begin{proof}[Proof of \Cref{lem:genusone}]
Given the data in the hypothesis, we prove there exists a tuple $(A,B,r,s,h,e,f)$ and functions $u,\widetilde u, v, \widetilde v$ on $E$ satisfying  the fixed-length and coupler-point equations of a four-bar:
\begin{center}
$\begin{array}{cc}
u \widetilde u = r,\quad \quad \quad  v \widetilde v = s, \quad \quad \quad(B_z-A_z+v-u)(B_w-A_w+\widetilde v - \widetilde u) = h \\ z=A_z+u+e(B_z-A_z+v-u),\quad \quad \quad\quad w = A_w + \widetilde u +f (B_w-A_w+\widetilde v - \widetilde u)
\end{array}$
\end{center}
First, we define three functions $a,b,$ and $c$ on $E$ whose poles together will cover the poles $P_0,P_1,P_2$ of $z$ and whose zeros together cover the poles $\tau(P_0),\tau(P_1),\tau(P_2)$ of $w$.  Let $\oplus$ be addition on $E$ after choosing an origin $O \in E$. Then $\tau(P_0) \oplus \tau(P_1) = P_0 \oplus P_1$ since, by
\Cref{lem:tau_in_elliptic}, 
\begin{align*}
\tau(P_0)\oplus\tau(P_1)
&=P_0 \oplus T \oplus P_1 \oplus T \quad \quad \text{ for some } T \text{ with } T\oplus T = O\\ 
&=P_0\oplus P_1\oplus T\oplus T \\
&=P_0\oplus P_1. 
\end{align*}
On an elliptic curve, divisors of the same degree are linearly
equivalent precisely when their sums in the group law agree \cite[Chapter I, Proposition 4.10]{MilneEllipticCurves}.
Consequently, there are rational functions $a,b$, and $c$ with
\begin{align*}
\operatorname{div}(a)
&=\tau(P_0)+\tau(P_1)-P_0-P_1 \\
\operatorname{div}(b)
&=\tau(P_0)+\tau(P_2)-P_0-P_2 \\
\operatorname{div}(c)
&=\tau(P_1)+\tau(P_2)-P_1-P_2. 
\end{align*}
Each function is determined up to a nonzero scalar. 
Write
\[
D_z=P_0+P_1+P_2,\qquad
D_w=\tau(P_0)+\tau(P_1)+\tau(P_2).
\]By Riemann--Roch
\cite[Chapter I, Theorem 4.13]{MilneEllipticCurves},
an effective divisor $D$ of positive degree on a smooth
projective genus-one curve satisfies $\dim L(D)=\deg D$,
where $L(D)$ is the vector space of rational functions on $E$
whose poles are bounded by $D$. Thus, 
\[
\dim L(D_z)=\dim L(D_w)=3.
\]
The functions $a,b,c$ lie in $L(D_z)$ and are linearly independent:
at $\tau(P_0),\tau(P_1),\tau(P_2)$, respectively, only $c$, only $b$,
and only $a$ is nonzero. They therefore form a basis of $L(D_z)$.
Similarly, evaluation at $P_0,P_1,P_2$ shows that
$a^{-1},b^{-1},c^{-1}$ form a basis of $L(D_w)$.

Expressing the constant function in these bases gives
\[
1=\lambda_a a+\lambda_b b+\lambda_c c,\qquad
1=\frac{\mu_a}{a}+\frac{\mu_b}{b}+\frac{\mu_c}{c}.
\]Evaluating the first identity at $\tau(P_0),\tau(P_1),\tau(P_2)$
and the second at $P_0,P_1,P_2$ shows that all six coefficients
are nonzero. Absorbing the $\lambda$ coefficients into the functions $a,b,c$ gives
\[
1=a+b+c,\qquad
1=\frac{\kappa_a}{a}+\frac{\kappa_b}{b}+\frac{\kappa_c}{c},
\]
where $\kappa_a,\kappa_b,\kappa_c$ are nonzero constants.
The prescribed coordinate functions have unique expressions
\[
z=z_a a+z_b b+z_c c,\qquad
w=w_a\frac{\kappa_a}{a}
 +w_b\frac{\kappa_b}{b}
 +w_c\frac{\kappa_c}{c}.
\]
Substituting $b=1-a-c$ and $\frac{\kappa_b}{b} = 1- \frac{\kappa_a}{a}-\frac{\kappa_c}{c}$, respectively, these become
\[
z=z_b+(z_a-z_b)a+(z_c-z_b)c, \quad \quad 
w=w_b+(w_a-w_b)\frac{\kappa_a}{a}
     +(w_c-w_b)\frac{\kappa_c}{c}.
\]
These expressions suggest taking the fixed pivots to be $
A=(z_b,w_b)$ and $B=(z_a,w_a)$.
Write $L_z=z_a-z_b$ and $L_w=w_a-w_b$ for the coordinates
of the ground vector $\overrightarrow{AB}$.
Both are nonzero: otherwise $z$ would have no pole at $P_0$,
or $w$ would have no pole at $\tau(P_0)$.

Define the side-link coordinates by
\[
u=L_z a,\qquad \widetilde u=L_w\frac{\kappa_a}{a},
\qquad
v=-L_z b,\qquad \widetilde v=-L_w\frac{\kappa_b}{b}.
\]
Then the moving pivots
\[
D=A+(u,\widetilde u),\qquad C=B+(v,\widetilde v)
\]
satisfy
\[
\overrightarrow{DC}
=\left(L_z+v-u,\;L_w+\widetilde v-\widetilde u\right)
=\left(L_zc,\;L_w\frac{\kappa_c}{c}\right).
\]
Thus the three moving links have constant squared lengths
\[
r=L_zL_w\kappa_a,\qquad
s=L_zL_w\kappa_b,\qquad
h=L_zL_w\kappa_c.
\]
Finally, setting
\[
e=\frac{z_c-z_b}{L_z},\qquad
f=\frac{w_c-w_b}{L_w}
\]
gives
\[
z=A_z+u+e(L_z+v-u),\qquad
w=A_w+\widetilde u+f(L_w+\widetilde v-\widetilde u).
\]
These are the required coupler-point equations for the mechanism
$(A,B,r,s,h,e,f)$.
\end{proof}
\end{document}